\documentclass[aap,preprint]{imsart}
\RequirePackage{amsthm,amsmath,amsfonts,amssymb}
\RequirePackage[numbers]{natbib}
\RequirePackage[colorlinks,citecolor=blue,urlcolor=blue]{hyperref}
\usepackage{xcolor}

\startlocaldefs

\usepackage{graphicx}
\usepackage{mathtools,stmaryrd}
\usepackage{ulem}
\usepackage[english]{babel}
\usepackage{todonotes}
\usepackage[T1]{fontenc}
\usepackage{appendix}
\usepackage{latexsym,mathrsfs,float}
\usepackage{enumitem}

\allowdisplaybreaks

\newtheorem{theorem}{Theorem}[section]
\newtheorem{corollary}[theorem]{Corollary}
\newtheorem{lemma}[theorem]{Lemma}

\newtheorem{proposition}[theorem]{Proposition}

\newtheorem{remark}[theorem]{Remark}

\numberwithin{equation}{section}

\def\p{{\mathbb P}}
\def\e{{\mathbb E}}
\def\q{{\mathbb Q}}

\def\z{{\mathbb Z}}

\newcommand{\abs}[1]{\left\lvert #1 \right\rvert}

\endlocaldefs

\begin{document}

\begin{frontmatter}
\title{Large deviations for maximum local time of simple random walk in dimensions \texorpdfstring{$d\ge 3$}{d>=3}}
\hypersetup{pdftitle={Large Deviations for Maximum Local Time of Simple Random Walk in Dimensions d>=3}}
\runtitle{Large deviations for maximum local time}

\begin{aug}
\author[A]{\fnms{Xinyi}~\snm{Li}\ead[label=e1]{xinyili@bicmr.pku.edu.cn}}
\and
\author[B]{\fnms{Yushu}~\snm{Zheng}\ead[label=e2]{yszheng666@gmail.com}}
\address[A]{Beijing International Center for Mathematical Research, Peking University\printead[presep={,\ }]{e1}}
\address[B]{Academy of Mathematics and Systems Science, Chinese Academy of Sciences\printead[presep={,\ }]{e2}}
\end{aug}

\begin{abstract}
We obtain sharp asymptotic probabilities for upward deviations of the maximum local time of discrete- and continuous-time simple random walks on $\mathbb{Z}^d$, $d\ge 3$. For downward deviations, we prove the sharp continuous-time asymptotics and the discrete-time upper bound. Together with the loop-pruning paper~\cite{li2026loopprune_inprep}, which proves the matching discrete-time lower bound via a loop-pruning construction, this yields the sharp downward-deviation asymptotics in discrete time as well. We also derive Gumbel-type consequences at the logarithmic scale.
\end{abstract}

\begin{keyword}[class=MSC]
\kwd[Primary ]{60F10, 60J55}
\kwd[; secondary ]{60G70}
\end{keyword}

\begin{keyword}
\kwd{simple random walk}
\kwd{maximum local time}
\kwd{large deviations}
\kwd{Gumbel fluctuations}
\end{keyword}

\end{frontmatter}
\section{Introduction}

The maximum local time of simple random walk is a classical object in the study of random walks. In dimensions $d \ge 3$, the walk is transient, so each fixed site is visited only finitely many times almost surely, while the trajectory up to time $n$ still explores many sites. The competition among these visited sites leads to an extremal quantity of logarithmic order. The classical work of Erd\H{o}s and Taylor identified this logarithmic scale, and subsequent work of R\'ev\'esz studied the maximum local time more directly in the transient setting; see \cite{ErdosTaylor1960,revesz2004maximum}. For the upper-tail mechanism relevant to the present paper, an especially useful predecessor is the work of Cs\'aki, F\"oldes, R\'ev\'esz, Rosen and Shi on frequently visited sets, whose two-point estimate for joint local times will play an important role in our analysis; see \cite{csaki2005frequently}.

A broader adjacent literature concerns thick points, frequent points, and related extreme local-time phenomena in two dimensions. On the Brownian side, Dembo, Peres, Rosen and Zeitouni developed the thick-point picture for planar Brownian motion and, as a consequence, resolved the Erd\H{o}s--Taylor conjecture for planar random walk; see \cite{dembo2001thick}. On the random-walk side, Bass and Rosen obtained asymptotics for frequent points in two dimensions; see \cite{bassrosen2007frequent}. More recently, Jego studied thick points of random walk through their connection with the Gaussian free field; see \cite{jego2020thick}. We also mention Rosen's recent tightness result for thick points in two dimensions; see \cite{rosen2023tightness}. 

While these works clarify the typical scale of the maximum local time and, in related settings, the structure of thick or frequent points, they leave open a natural large-deviation question. The present paper, together with the loop-pruning paper~\cite{li2026loopprune_inprep}, answers this question for simple random walks in transient dimensions $d\ge 3$. We remark that intimately related but different extremal problems for cover times were studied in \cite{belius2013gumbel,comets2013large,li2024large}. 

We work throughout with the discrete-time simple random walk $(S_n)_{n \ge 0}$ on $\mathbb Z^d$, $d \ge 3$ as well as the corresponding continuous-time walk obtained by attaching i.i.d.\ $\mathrm{Exp}(1)$ holding times to the discrete-time walk, which we denote by $(Y_t)_{t \ge 0}$. We write $\mathbb P$ for the law governing both walks. For $x \in \mathbb Z^d$, let
\[
\xi(n,x) := \sum_{k=0}^n \mathbf 1_{\{S_k=x\}},
\qquad
\ell(t,x) := \int_0^t \mathbf 1_{\{Y_s=x\}}\,ds
\]
be the local times of the discrete-time and continuous-time walks, respectively, and set
\[
\xi^*(n) := \max_{x \in \mathbb Z^d} \xi(n,x),
\qquad
\ell^*(t) := \max_{x \in \mathbb Z^d} \ell(t,x).
\]
Thus $\xi^*(n)$ and $\ell^*(t)$ record the local time at a most frequently visited site. Let
\[
\gamma = \gamma_d := \mathbb P\bigl(S_n \neq 0 \text{ for all } n \ge 1\bigr),
\qquad
\alpha = \alpha_d := -\frac{1}{\log(1-\gamma)}.
\]
It is classical that
\[
\lim_{t \to \infty} \frac{\ell^*(t)}{\log t} = \gamma^{-1},
\qquad
\lim_{n \to \infty} \frac{\xi^*(n)}{\log n} = \alpha
\]
almost surely; see, for instance, Theorem~13 in \cite{ErdosTaylor1960}. Thus the natural scales of the maximum local time are $\gamma^{-1}\log t$ in continuous time and $\alpha \log n$ in discrete time.

\subsection{Main results}

\begin{theorem}[Upward deviations]\label{thm:upwards}
For any $\beta > 1$ and $u \in \mathbb{R}$, as $t \to \infty$ or $n \to \infty$, we have
\begin{align}
\mathbb{P}\bigl(\ell^*(t) > \beta \cdot \gamma^{-1}\log t + u\bigr)
&= (1+o(1)) \gamma e^{-\gamma u} t^{-(\beta-1)}, \label{eq:conti_UD}\\
\mathbb{P}\bigl(\xi^*(n) > \beta \cdot \alpha \log n + u\bigr)
&= (1+o(1)) \gamma n (1-\gamma)^{\lfloor \beta \cdot \alpha \log n + u \rfloor}. \label{eq:discrete_UD}
\end{align}
\end{theorem}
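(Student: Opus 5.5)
The plan is a first-and-second moment argument, run separately in continuous and discrete time. We count the sites whose local time exceeds the threshold at their first entrance. In continuous time set $h=h_t:=\beta\gamma^{-1}\log t+u$. Each visit of $Y$ to a site $x$ consists of a geometric number $G$ of sojourns, with $\mathbb P(G\ge k)=(1-\gamma)^{k-1}$ (the walk returns to $x$ with probability $1-\gamma$). Each sojourn is an independent $\mathrm{Exp}(1)$ holding time, so the total time ever spent at $x$ is $\mathrm{Exp}(\gamma)$. Define
\[
N_t:=\#\bigl\{x:\ \text{the first entrance time } \tau_x\le t \text{ and } \ell(\infty,x)-\ell(\tau_x,x)>h\bigr\}.
\]
By the strong Markov property at $\tau_x$,
\[
\mathbb E N_t=\mathbb E\bigl[R(t)\bigr]e^{-\gamma h},
\]
where $R(t)$ is the number of distinct sites visited by time $t$. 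For $d\ge3$ we have $\mathbb E R(t)=(1+o(1))\gamma t$, since the range of $n$ discrete steps is $\sim\gamma n$ and $t$ continuous time corresponds to $\sim t$ steps. Hence $\mathbb E N_t=(1+o(1))\gamma e^{-\gamma u}t^{-(\beta-1)}$, which is the target.

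The discrete case is identical. There $\xi(\infty,x)$ after first entrance is geometric, with $\mathbb P(\xi(\infty,x)>m)=(1-\gamma)^{m}$ for integer $m$. This yields
\[
\gamma n(1-\gamma)^{\lfloor \beta\alpha\log n+u\rfloor},
\]
which explains the floor in \eqref{eq:discrete_UD}.

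Next we compare $\{\ell^*(t)>h\}$ with $\{N_t\ge1\}$. Local time exceeding $h$ by time $t$ implies total local time exceeding $h$. The difference between the two events comes from two sources:
\begin{enumerate}[label=(\roman*)]
\item sites visited before $t$ whose accumulation after $t$ pushes them over $h$;
\item sites entered before $t$ that exceed $h$ only in their total.
\end{enumerate}
To control these, I would use a window $t_\pm=t(1\pm\varepsilon_t)$ with $\varepsilon_t\to0$ slowly. Most of the local time at a site is gathered within time $O((\log t)^{C})$ after $\tau_x$. This is because excursions from $x$ that return after a long time $s$ have probability $O(s^{1-d/2})$, which is summable. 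Consequently
\[
\{N_{t_-}'\ge1\}\subset\{\ell^*(t)>h\}\subset\{N_{t}\ge1\},
\]
where $N'$ requires the excess to be achieved within time $t\varepsilon_t$ after entrance. Its mean differs from $\mathbb E N_{t_-}$ by a factor $1+o(1)$. This gives the upper bound $\mathbb P(\ell^*(t)>h)\le\mathbb E N_t$.

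For the lower bound, the tool is $\mathbb P(N'\ge1)\ge\mathbb E N'-\tfrac12\mathbb E[N'(N'-1)]$. So the remaining step, which is the main obstacle, is showing that
\[
\mathbb E[N'(N'-1)]=o(\mathbb E N'),
\]
i.e.\ that the expected number of ordered pairs of distinct high sites is negligible compared with the first moment. Here I would invoke the two-point estimate of Cs\'aki--F\"oldes--R\'ev\'esz--Rosen--Shi \cite{csaki2005frequently} for joint local times of two sites $x\ne y$ at distance $r$. It bounds the probability that both have total local time $>h$ by roughly $e^{-\gamma h(1+c_r)}$, with $c_r>0$ depending only on the distance. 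For a walk started at $x$, the pair "$x$ and $y$ both large" requires many crossings between $x$ and $y$, and the relevant decay rate is the top eigenvalue of the $2\times2$ return matrix, which is strictly less than $1-\gamma$.

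I would split the pair sum by distance.
\begin{enumerate}[label=(\roman*)]
\item \emph{Far pairs}, with $|x-y|>(\log t)^{K}$ or entrance times separated by more than $(\log t)^K$. These are nearly independent: after leaving the neighbourhood of $x$, hitting $y$ far away and then accumulating $h$ there costs a further factor of about $\mathbb E N'/t\cdot t$. This contributes $O\bigl((\mathbb E N')^2\bigr)=o(\mathbb E N')$, since $\beta>1$ makes $\mathbb E N'\to0$.
\item \emph{Near pairs}, of which there are at most $t\,(\log t)^{Kd}$ configurations. Each has probability at most $e^{-\gamma h(1+c)}$ with $c=\inf_r c_r>0$. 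This requires uniformity of $c_r$ in $r$: as $r\to\infty$ the sites decouple and $c_r\to1$. The contribution is then $t(\log t)^{Kd}t^{-\beta(1+c)}=o(t^{1-\beta})$.
\end{enumerate}

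The discrete-time version is the same argument with geometric laws, using the discrete two-point bound. I expect the care to lie in making the near-pair exponent gap uniform and in the window/truncation bookkeeping so that only $1+o(1)$ factors are lost.
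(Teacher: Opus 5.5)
Your argument works, but it is organized differently from the paper's. The paper counts $\mathcal N=\#\{x:\xi(n,x)>m_n\}$ through the last-crossing representation \eqref{eq:represent}. The factor $\gamma$ then comes from $\p(\xi(\infty,0)=m_n+1)=\gamma(1-\gamma)^{m_n}$, so even the upper bound needs Lemma~\ref{lem:accumulate_early} to replace horizon $n$ by $\infty$. Its second moment is a sum over pairs of \emph{times} $(i,j)$, which forces a split by time gap: short gaps are handled by \eqref{eq:two_point_discrete}, and long gaps by shells $[n^\eta,n^{10\eta/9})$, using an LCLT bound when $|S_i-S_j|$ is small and the near-factorization \eqref{eq:large_dis} when it is large. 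Continuous time is then reduced to jump times via \eqref{eq:moderate}.

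You instead index sites at their first entrance and use infinite-horizon local times. This gives the upper bound in one line, $\p(\ell^*(t)>h)\le\e[R(t)]\,e^{-\gamma h}$, at the cost of importing the Dvoretzky--Erd\H{o}s asymptotics $\e R_n\sim\gamma n$. In the pair sum, a strong Markov step at the earlier entrance bounds all pairs with $|x-y|\le(\log t)^K$ by $\lesssim\e[R(t)]\,(\log t)^{Kd}\sup_{z\neq0}\p\bigl(\ell(\infty,0)>h,\ \ell(\infty,z)>h\bigr)$, so no LCLT shells are needed. Your lower bound still requires an analogue of Lemma~\ref{lem:accumulate_early} for the windowed count.

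Several steps need repair, though none is fatal.
\begin{enumerate}
\item The tail $s^{1-d/2}$ is not summable for $d=3,4$. Use instead, as in the proof of Lemma~\ref{lem:accumulate_early}, a union bound over the $O(\log t)$ excursions with a polynomial time threshold.
\item Split the pairs by distance alone. Pairs close in space but entered far apart in time are not nearly independent: after $\tau_y$ the walk starts within distance $(\log t)^K$ of $x$, where $t_{y-x}$ need not be small. The uniform-gap bound above already covers them.
\item For distant pairs, condition at $\tau_y$ on $a=\ell(\tau_y,x)$ and apply Corollary~\ref{cor:CFRRZ} with $t_{y-x}\lesssim(\log t)^{-K(d-2)}$ and $K(d-2)>1$. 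This gives the conditional bound $(1+o(1))e^{-2\gamma h}e^{\gamma\min(a,h)}$. Averaging over $a$ and summing over pairs yields $O(t^{2-2\beta}\log t)=o(t^{1-\beta})$. In discrete time $R_n\le n+1$ makes this immediate; in continuous time it is cleaner at jump times.
\item Your eigenvalue heuristic is reversed. The probability of revisiting $\{x,y\}$ is $1-\gamma/(1+t_{y-x})>1-\gamma$, and the gain comes only from the sum having to exceed $2h$. It is uniform because $\sup_{y\neq0}t_y=1-\gamma$, which lies below the needed thresholds $1$ (continuous time) and $\sqrt{1-\gamma}$ (discrete time).
\end{enumerate}
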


Note that
\[
n^{-\beta}(1-\gamma)^u
\le (1-\gamma)^{\lfloor \beta \cdot \alpha \log n + u \rfloor}
< n^{-\beta}(1-\gamma)^{u-1}.
\]
Thus, setting
\[
c_{\beta,n,u}
:= n^\beta (1-\gamma)^{-u}(1-\gamma)^{\lfloor \beta \cdot \alpha \log n + u \rfloor}
\in [1,(1-\gamma)^{-1}),
\]
we may rewrite \eqref{eq:discrete_UD} as
\[
\mathbb{P}\bigl(\xi^*(n) > \beta \cdot \alpha \log n + u\bigr)
= (1+o(1)) c_{\beta,n,u}\gamma (1-\gamma)^u n^{-(\beta-1)}.
\]

In this paper, the downward problem is fully resolved for the continuous-time walk. For the discrete-time walk, we prove the corresponding upper bound, while the matching lower bound, which is much more involved due to the discrete-time nature of the walk, is proved in the loop-pruning paper~\cite{li2026loopprune_inprep} using a novel construction called the ``loop-pruned random walk''.

\begin{theorem}[Downward deviations]\label{thm:downward}
For any $\beta \in (0,1]$ and $u \in \mathbb{R}$, as $t \to \infty$ or $n \to \infty$, we have
\begin{align}
\mathbb{P}\bigl(\ell^*(t) \le \beta \cdot \gamma^{-1}\log t + u\bigr)
&= \exp\bigl\{-(1+o(1)) \gamma e^{-\gamma u} t^{1-\beta}\bigr\}, \label{eq:conti_DD}\\
\mathbb{P}\bigl(\xi^*(n) \le \beta \cdot \alpha \log n + u\bigr)
&\le \exp\bigl\{-(1+o(1)) c_{\beta,n,u}\gamma (1-\gamma)^u n^{1-\beta}\bigr\}. \label{eq:discrete_DD}
\end{align}
\end{theorem}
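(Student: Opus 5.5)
We prove the downward bounds by cutting the time interval into many nearly independent blocks and making each block unlikely to avoid a high local time. Set $h:=\beta\gamma^{-1}\log t+u$ in continuous time and $h_n:=\lfloor\beta\alpha\log n+u\rfloor$ in discrete time.

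\textbf{Single-site tail.} For continuous time, once a site $x$ is first visited, the total local time at $x$ is $\mathrm{Exp}(\gamma)$. This holds because each visit lasts an $\mathrm{Exp}(1)$ holding time and the number of visits is geometric with success parameter $\gamma$. Hence the probability that a fixed newly visited site eventually accumulates local time exceeding $h$ is $e^{-\gamma h}=e^{-\gamma u}t^{-\beta}$. In discrete time the analogous probability is $(1-\gamma)^{h_n}$, so $\gamma n(1-\gamma)^{h_n}=c_{\beta,n,u}\gamma(1-\gamma)^u n^{1-\beta}$.

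\textbf{Block decomposition.} Split $[0,t]$ into $m$ blocks of length $L=t/m$, with $L=t^{\delta}$ for small $\delta>0$ and $L\gg h^{C}$. In block $j$, consider the walk restarted at the start of the block and its local times $\ell_j(\cdot,x)$ inside the block. Then
\[
\{\ell^*(t)\le h\}\subset\bigcap_j\{\max_x\ell_j(L,x)\le h\}.
\]
By the Markov property these block events are independent with a common probability $p_L:=\mathbb P(\ell^*(L)\le h)$. So it suffices to show
\[
1-p_L\ge(1+o(1))\gamma e^{-\gamma u}t^{-\beta}L.
\]
Indeed, $p_L^{m}\le\exp\{-m(1-p_L)\}$, which gives the upper bound with exponent $(1+o(1))\gamma e^{-\gamma u}t^{1-\beta}$. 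The discrete case is identical with $n$ blocks of length $n^{\delta}$.

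\textbf{Lower bound on $1-p_L$.} This step is the crux. I would use second-moment (Bonferroni) inclusion--exclusion over the first-hitting times of new sites. Let $R_L$ be the range, so that $\mathbb E R_L\sim\gamma L$. Let $A_x$ be the event that site $x$ is first visited before time $L-L'$, with $L'=h^{3}$ say, and that its total local time within the next $L'$ units exceeds $h$. The first moment is
\[
\sum_x\mathbb P(A_x)\approx\gamma L\,e^{-\gamma h}(1-o(1)).
\]
Truncating to $L'$ costs little, since excursions that return after time $L'$ contribute $O(L'^{1-d/2})$. The pair sum $\sum_{x\ne y}\mathbb P(A_x\cap A_y)$ must be $o(Le^{-\gamma h})$. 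For $|x-y|$ large the events decorrelate, giving a product of order $L^2e^{-2\gamma h}$. This is negligible because $Le^{-\gamma h}=t^{\delta-\beta}\to0$ when $\delta<\beta$. For nearby $x,y$, I would invoke the two-point joint local-time estimate of Cs\'aki--F\"oldes--R\'ev\'esz--Rosen--Shi: $\mathbb P(\ell(\infty,x)>h,\ell(\infty,y)>h)\le Ce^{-(1+\epsilon)\gamma h}$ uniformly in $y\neq x$, for some $\epsilon>0$. Summed over $y$ within distance $h^{C}$, this stays $o(e^{-\gamma h})$, because $h$ is logarithmic and the gain $e^{-\epsilon\gamma h}$ is polynomial. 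This gives $1-p_L\ge(1-o(1))\gamma Le^{-\gamma h}$.

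\textbf{Main obstacle.} The delicate points are keeping the constant sharp and handling the two-point estimate. For the constant, the boundary losses from the last $L'$ of each block and the law of large numbers $R_L/L\to\gamma$ in expectation must both be $o(1)$. The two-point estimate is needed in a form uniform for $y$ adjacent to $x$. In discrete time the same scheme works with geometric tails, and the rounding produces exactly $c_{\beta,n,u}$.
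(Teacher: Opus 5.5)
\paragraph{Assessment.} There is a genuine gap: the proposal proves only the upper half of \eqref{eq:conti_DD}.

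\paragraph{What is correct.} Your block argument for the upper bounds is the paper's own argument (Section~\ref{sec:UB}): independent blocks of length $t^{\beta'}$ with $\beta'<\beta$, combined with $p_L^{m}\le e^{-m(1-p_L)}$. What you call the crux, $1-p_L\ge(1+o(1))\gamma e^{-\gamma u}t^{-\beta}L$, is just \eqref{eq:conti_UD} applied at time $L=t^{\delta}$ with exponent $\beta/\delta>1$. In discrete time it is \eqref{eq:discrete_UD}, which also produces $c_{\beta,n,u}$. Your second-moment re-derivation is therefore redundant once Theorem~\ref{thm:upwards} is available. (In discrete time you need $n^{1-\delta}$ blocks, not $n$.)

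\paragraph{The missing lower bound.} Statement \eqref{eq:conti_DD} is a two-sided asymptotic. Your proposal never proves $\mathbb P(\ell^*(t)\le\beta\gamma^{-1}\log t+u)\ge\exp\{-(1+o(1))\gamma e^{-\gamma u}t^{1-\beta}\}$, and the block decomposition cannot supply it. The inclusion $\{\ell^*(t)\le h\}\subset\bigcap_j\{\max_x\ell_j(L,x)\le h\}$ is strict, because a site can exceed $h$ by collecting local time from several blocks (global accumulation). So $p_L^{m}$ is only an upper bound. This lower bound is where almost all of the paper's work lies, and it needs a genuinely new mechanism.

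\paragraph{How the paper gets it.}
\begin{enumerate}
\item \emph{Forcing.} Once a site's local time first reaches $\Lambda_n=\beta\gamma^{-1}\log n-\eta$, its later holding times are forced to satisfy $h_j^k<\eta/2^{k+1}$, which caps its total at $\beta\gamma^{-1}\log n$.
\item \emph{Cost of forcing.} Conditionally on $\mathcal N$ and $(\mathcal M_j)$, this costs exactly $\prod_{j\le\mathcal N}\prod_{k\le\mathcal M_j}(1-e^{-\eta/2^{k+1}})$ (Proposition~\ref{prop:condi_prob}). That is $e^{-o(n^{1-\beta})}$ provided $\mathcal N\le n^{b}$ with $b<1-\beta$ and $\mathcal M_j\le c\log n$.
\item \emph{The hard step} (Proposition~\ref{prop:NM_bound}) is showing these bounds hold with probability at least $\exp\{-(1+\delta/2+o(1))\gamma n^{1-\beta}\}$. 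One conditions on the event $A$ that no block of length $n^{\beta_1}$, with its first $n^{\beta_2}$ steps removed, produces local time $\ge\Lambda_n$. This event carries the full main-term cost, computed from the upward asymptotics.
\item \emph{Bounded visits.} The removed steps are untouched by the conditioning. Using them, and $\frac d2\beta_2>\beta_1$, one shows that under $\mathbb P(\cdot\mid A)$ every site is visited by at most $M$ blocks (Proposition~\ref{prop:Jx}).
\item \emph{Counting dangerous sites.} The expected number of sites reaching $\Lambda_n$ is then bounded by a case analysis on the set of blocks visiting the site: a single block; consecutive blocks, via a delayed-return estimate; and non-interval sets, via decay across the gaps.
\end{enumerate}
Without some such mechanism decoupling local from global accumulation, you have proved only one half of \eqref{eq:conti_DD}.
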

\begin{remark}
    We formulate the results for simple random walk in order to keep the exposition transparent. We expect that the arguments can be adapted to broader classes of transient finite-range random walks.
\end{remark}
\begin{remark}[Gumbel fluctuations]\label{rem:intro-gumbel}
When $\beta = 1$, the preceding results yield Gumbel-type fluctuations around the logarithmic centering:
\begin{itemize}
\item In the continuous-time case, $\ell^*(t) - \gamma^{-1}\log t$ converges in distribution to a Gumbel law with mode $\gamma^{-1}\log \gamma$ and scale parameter $\gamma^{-1}$.
\item In the discrete-time case, the present paper gives the downward upper bound. Combining this with the matching lower bound from the subsequent paper~\cite{li2026loopprune_inprep} yields the corresponding lattice, or discretized, Gumbel-type asymptotics, with the arithmetic oscillation encoded by $c_{1,n,u}$.
\end{itemize}
One may compare these results with Jego's exit-time result in dimension $d\ge3$, where for the continuous-time walk stopped at the first exit time $\tau_N$ of $V_N$, the centred maximum $\sup_{x\in V_N}\ell_{\tau_N}^x-2g\log N$ converges to a randomly shifted Gumbel law \cite[Theorem~1.2.3]{jego2020thick}. The same mesoscopic block decomposition underlying our proofs should likewise yield large-deviation estimates for the maximal local time up to exit times, by combining the deterministic-time one-block estimates with standard exit-time bounds.

One may also compare these with the Gumbel fluctuation of the cover time of tori in dimension $d \ge 3$; see~(1.2) of \cite{belius2013gumbel}.
\end{remark}
\begin{remark}[Moderate deviations]\label{rem:intro-moderate}
The estimates above can also be used in the moderate-deviation regime where the threshold approaches the typical logarithmic scale. If \(a_t\) is positive, \(a_t\to\infty\), and \(a_t=o(\log t)\), then
\begin{align*}
\mathbb P\bigl(\ell^*(t)>\gamma^{-1}\log t+a_t\bigr)
&=(1+o(1))\gamma e^{-\gamma a_t},\\
\mathbb P\bigl(\ell^*(t)\le \gamma^{-1}\log t-a_t\bigr)
&=\exp\{-(1+o(1))\gamma e^{\gamma a_t}\}.
\end{align*}
Similarly, if \(a_n\) is positive, \(a_n\to\infty\), and \(a_n=o(\log n)\), then the discrete-time upward tail satisfies
\begin{align*}
\mathbb P\bigl(\xi^*(n)>\alpha\log n+a_n\bigr)
&=(1+o(1))\gamma n(1-\gamma)^{\lfloor \alpha\log n+a_n\rfloor}.
\end{align*}
For the discrete-time downward tail, the argument in the present paper gives the corresponding upper bound
\begin{align*}
\mathbb P\bigl(\xi^*(n)\le \alpha\log n-a_n\bigr)
&\le \exp\{-(1+o(1))\gamma n(1-\gamma)^{\lfloor \alpha\log n-a_n\rfloor}\}.
\end{align*}
The matching lower bound in this moderate-deviation regime requires the discrete-time loop-pruning construction developed in the loop-pruning paper~\cite{li2026loopprune_inprep}.
\end{remark}

These results exhibit a clear asymmetry between the two tails. Upward deviations are caused by the appearance of a single unusually thick site, and their probabilities are polynomial, up to the arithmetic correction in discrete time. Downward deviations, by contrast, are collective: one must prevent the walk from creating any site whose local time exceeds a subcritical threshold, and the resulting cost is stretched exponential, with exponent $1-\beta$. 


We now briefly comment on the proof for downward deviations. The upper bounds are obtained by decomposing the trajectory into many time intervals and requiring that no interval produce a local time above the target scale. For the continuous-time lower bound, we force the walk, once a site approaches the threshold, to spend only very short future holding times there, thereby preventing any further substantial accumulation at that site. To make this strategy affordable, we further decompose the path into segments and distinguish between local accumulation, where a single segment already creates a near-thick site, and global accumulation, where several separated segments contribute to the same site. The latter mechanism is much more costly, since it requires returns across long time gaps. Under a suitable conditioning that rules out local accumulation on slightly shortened segments, one shows that, with high probability, each site is visited by only boundedly many segments, which yields control of both the number of dangerous sites and the number of their later returns.

The rest of the paper is organized as follows. Section~\ref{sec:preliminaries} collects the standard heat-kernel, hitting, and local-time estimates used throughout. Section~\ref{sec:pf_of_upwards} proves the upward-deviation asymptotics for both discrete-time and continuous-time walks. Section~\ref{sec:pf_of_downwards} treats the downward regime: it establishes the upper bounds in both settings and the continuous-time lower bound, while the discrete-time lower bound is proved in the loop-pruning paper~\cite{li2026loopprune_inprep}.

\begin{funding}
XL is supported by the National Key R\&D Program of China (No.~2021YFA1002700).
YZ is supported by the China Postdoctoral Science Foundation (Nos.~2023M743721 and 2025T180850).
\end{funding}
\section{Preliminaries}\label{sec:preliminaries}
We collect here several standard estimates for transient simple random walk on $\mathbb Z^d$, $d\ge 3$, that will be used throughout the proofs.

\subsection{Heat-kernel and hitting estimates}

We begin with the standard heat-kernel bound.

\begin{lemma}[Theorem~2.3.9 in \cite{lawler2010random}]\label{lem:lclt}
Uniformly for $n\in \mathbb N^+$, $t\in [1,\infty)$ and $x\in \mathbb Z^d$,
\begin{align}\label{eq:lclt}
\p(S_n=x)\lesssim n^{-d/2};\qquad \p(Y_t=x)\lesssim t^{-d/2}.
\end{align}
\end{lemma}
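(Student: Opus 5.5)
The discrete bound $\p(S_n=x)\lesssim n^{-d/2}$ is the standard local central limit theorem estimate. I would prove it by Fourier inversion. Let $\phi(\theta)=d^{-1}\sum_{j=1}^d\cos\theta_j$ be the characteristic function of one step. Then
\[
\p(S_n=x)=(2\pi)^{-d}\int_{[-\pi,\pi]^d}\phi(\theta)^n e^{-i x\cdot\theta}\,d\theta ,
\]
and therefore $\p(S_n=x)\le (2\pi)^{-d}\int_{[-\pi,\pi]^d}|\phi(\theta)|^n\,d\theta$ uniformly in $x$.

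I would split the torus into three regions:
\begin{itemize}
\item a neighbourhood $|\theta|\le \delta$ of the origin;
\item a neighbourhood $|\theta-(\pi,\dots,\pi)|\le \delta$ of the antipodal point $\pi\mathbf 1$;
\item the rest of $[-\pi,\pi]^d$.
\end{itemize}
Near the origin, $\phi(\theta)\le 1-c|\theta|^2\le e^{-c|\theta|^2}$. Integrating gives $\int_{\mathbb R^d}e^{-cn|\theta|^2}\,d\theta\asymp n^{-d/2}$.

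Near $\pi\mathbf 1$, the identity $\phi(\theta+\pi\mathbf 1)=-\phi(\theta)$ (a consequence of periodicity) gives the same bound. Away from these two points, $|\phi|\le 1-\eta$ for some $\eta=\eta(\delta)>0$, since $|\phi(\theta)|=1$ only when all $\cos\theta_j$ equal $1$ or all equal $-1$. That region therefore contributes at most $(1-\eta)^n\lesssim n^{-d/2}$. The only mildly delicate point is this bipartite (periodicity) issue at $\pi\mathbf 1$, which is handled by the symmetry above. For $n$ below any fixed constant the bound is trivial, since probabilities are at most $1$.

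For the continuous-time walk I would condition on the number of jumps. With $N_t\sim\mathrm{Poisson}(t)$ independent of $S$, we have $Y_t=S_{N_t}$, so
\[
\p(Y_t=x)=\sum_{k\ge0}e^{-t}\frac{t^k}{k!}\,\p(S_k=x).
\]
Split the sum at $k=t/2$. For $k\ge t/2$, the discrete bound gives $\p(S_k=x)\lesssim k^{-d/2}\lesssim t^{-d/2}$. For $k<t/2$, bound $\p(S_k=x)\le 1$ and use the Poisson (Chernoff) tail $\p(N_t<t/2)\le e^{-ct}\lesssim t^{-d/2}$.

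Alternatively, one can apply Fourier inversion directly: $\p(Y_t=x)\le(2\pi)^{-d}\int e^{-t(1-\phi(\theta))}\,d\theta$. Here $1-\phi\ge c|\theta|^2$ on $[-\pi,\pi]^d$, and $1-\phi$ vanishes only at $0$, so in continuous time there is no periodicity issue at all. Either route gives the bound uniformly for $t\ge1$ and $x\in\z^d$.
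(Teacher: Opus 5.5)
Your proof is correct. The paper gives no argument for this lemma; it simply cites Lawler--Limic. So the comparison is between a citation and your self-contained derivation.

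Your discrete-time argument is the standard characteristic-function route to such local-limit bounds, reduced to what the crude uniform bound actually needs. You use only the quadratic estimate $|\phi(\theta)|\le e^{-c|\theta|^2}$ near the points where $|\phi|=1$, the symmetry $\phi(\theta+\pi\mathbf 1)=-\phi(\theta)$, and a uniform gap $1-\eta$ elsewhere. No asymptotic expansion of $\phi$ or local limit theorem with error terms is required. The citation buys brevity, and the sharper statement if one ever wanted it. Your version shows directly why the bound is uniform in $x$ and how the bipartite obstruction is handled.

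One presentational point needs fixing. In $[-\pi,\pi]^d$, the set where $|\phi|=1$, apart from $0$, consists of all $2^d$ corners $(\pm\pi,\dots,\pm\pi)$. Your ``neighbourhood of $\pi\mathbf 1$'' must therefore be taken on the torus $\mathbb R^d/2\pi\mathbb Z^d$, or around every corner. Otherwise the claim $|\phi|\le 1-\eta$ on ``the rest'' is false.

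For continuous time, both of your routes work. The direct Fourier route is the cleaner one, since $1-\phi(\theta)=\tfrac{2}{d}\sum_j\sin^2(\theta_j/2)\ge c|\theta|^2$ on all of $[-\pi,\pi]^d$ and no parity issue arises. The Poisson-mixing route has the advantage of reducing everything to the discrete estimate, and the Chernoff tail for $N_t<t/2$ handles $t\ge 1$ uniformly.
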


The next two simple consequences will be used repeatedly.

\begin{corollary}
Uniformly for $n\in\mathbb N^+$ and $t,r\ge 1$,
\begin{align}\label{eq:uniform_in_range}
\p(\abs{S_n}<r)\lesssim \Big(\frac{r}{n^{1/2}}\Big)^d;\qquad
\p(\abs{Y_t}<r)\lesssim \Big(\frac{r}{t^{1/2}}\Big)^d.
\end{align}
\end{corollary}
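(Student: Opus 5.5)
The plan is to sum the pointwise heat-kernel bound of Lemma~\ref{lem:lclt} over the lattice points of the ball of radius $r$, treating the discrete-time and continuous-time walks identically. For the discrete-time walk,
\[
\p(\abs{S_n}<r)=\sum_{x\in\z^d,\ \abs{x}<r}\p(S_n=x)\le \#\{x\in\z^d:\abs{x}<r\}\cdot \sup_{x}\p(S_n=x),
\]
and Lemma~\ref{lem:lclt} bounds the supremum by a constant multiple of $n^{-d/2}$, uniformly in $n\in\mathbb N^+$ and $x$. The continuous-time case is the same computation with $\p(Y_t=x)\lesssim t^{-d/2}$ for $t\ge1$.

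It remains to count lattice points. For $r\ge 1$, every $x\in\z^d$ with $\abs{x}<r$ lies in the cube $[-r,r]^d$. Each coordinate takes at most $2r+1\le 3r$ integer values, so the count is at most $(3r)^d\lesssim r^d$. The hypothesis $r\ge1$ is used exactly here, to absorb the additive constant from the origin into $r^d$.

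Combining the two estimates gives $\p(\abs{S_n}<r)\lesssim r^d n^{-d/2}=(r/n^{1/2})^d$, and likewise $\p(\abs{Y_t}<r)\lesssim (r/t^{1/2})^d$. The implied constants depend only on $d$, so the bound is uniform in $n$, $t$ and $r$. When $r\gtrsim n^{1/2}$ the right-hand side is at least of order one, so the bound holds trivially there.

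There is no genuine obstacle. The only points to watch are the restriction $t\ge1$, which Lemma~\ref{lem:lclt} requires and the statement assumes, and the restriction $r\ge1$, which the lattice-point count requires.
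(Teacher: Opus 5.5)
Your proof is correct and is exactly the argument the paper intends. The paper records this corollary without proof as a ``simple consequence'' of Lemma~\ref{lem:lclt}, i.e.\ summing the pointwise bounds $\p(S_n=x)\lesssim n^{-d/2}$ and $\p(Y_t=x)\lesssim t^{-d/2}$ over the $O(r^d)$ lattice points with $\abs{x}<r$, with $r\ge 1$ used only in the lattice-point count, just as you do.
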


\begin{corollary}
Uniformly for $x\in \mathbb Z^d$ and $m\in \mathbb N^+$,
\begin{align}\label{eq:late_hit}
\p\big(x\in S[m,\infty)\big)\lesssim m^{-(d/2-1)}.
\end{align}
\end{corollary}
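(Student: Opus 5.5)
The plan is a union bound over times, with each term controlled by Lemma~\ref{lem:lclt}. The event $\{x\in S[m,\infty)\}$ means that some time $k\ge m$ has $S_k=x$, so
\[
\p\big(x\in S[m,\infty)\big)\le \sum_{k\ge m}\p(S_k=x).
\]
Lemma~\ref{lem:lclt} bounds each term by $C k^{-d/2}$, with $C$ independent of $x$ and $k$. This gives
\[
\p\big(x\in S[m,\infty)\big)\le C\sum_{k\ge m}k^{-d/2}.
\]

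The remaining step is to estimate the tail sum. Since $d\ge 3$, we have $d/2>1$, so the series converges. Comparing with the integral (the function $s\mapsto s^{-d/2}$ is decreasing) gives
\[
\sum_{k\ge m}k^{-d/2}\le m^{-d/2}+\int_m^\infty s^{-d/2}\,ds = m^{-d/2}+\frac{m^{1-d/2}}{d/2-1}\lesssim m^{-(d/2-1)},
\]
uniformly in $m\ge1$. Combining the two displays proves \eqref{eq:late_hit}, uniformly in $x$ and $m$.

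There is no real obstacle here. The only point needing care is that the constant from Lemma~\ref{lem:lclt} is uniform in $x$; this is exactly what makes the final bound uniform in $x$. A sharper argument, using a last-exit decomposition with the Green's function, is unnecessary, since the crude union bound already gives the correct exponent $d/2-1$.
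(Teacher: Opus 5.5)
Your proposal is correct and is the argument the paper intends: the paper lists this as an immediate consequence of Lemma~\ref{lem:lclt} without writing out a proof. The union bound over times $k\ge m$ with the uniform heat-kernel bound, followed by the tail estimate $\sum_{k\ge m}k^{-d/2}\lesssim m^{-(d/2-1)}$ for $d\ge 3$, is exactly that consequence.
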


We also recall the classical asymptotic behavior of the hitting probability.

\begin{lemma}[Lemma~17.8 in \cite{revesz2013random}]\label{lem:Revesz}
There exists a constant $C_d>0$ such that
\begin{align}\label{eq:hit_prob}
\p\big(x\in S[0,\infty)\big)=\frac{C_d+o(1)}{\abs{x}^{d-2}},
\qquad \text{as }\abs{x}\to\infty.
\end{align}
\end{lemma}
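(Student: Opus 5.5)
We would prove the asymptotics by reducing the hitting probability to the Green's function and then computing the Green's function from a sharp local central limit theorem. Let $G(x):=\sum_{n\ge 0}\p(S_n=x)$, which is finite for $d\ge 3$. First, the strong Markov property at the hitting time $\tau_x:=\inf\{n\ge0:S_n=x\}$ gives $G(x)=\p(\tau_x<\infty)\,G(0)$, since after reaching $x$ the walk accumulates expected further visits $G(x,x)=G(0)$ there. Second, the number of visits to $0$ is geometric with success parameter $\gamma$, so $G(0)=\gamma^{-1}$. Together,
\[
\p\big(x\in S[0,\infty)\big)=\gamma\, G(x).
\]
It therefore suffices to show $G(x)=(a_d+o(1))\abs{x}^{2-d}$, where $a_d:=\frac{d}{2}\Gamma(\frac d2-1)\pi^{-d/2}$. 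This yields the lemma with $C_d=\gamma a_d$.

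For the Green's function we need a sharpened form of Lemma~\ref{lem:lclt}, namely the local CLT with error term (Lawler--Limic, Theorem~2.3.5 in \cite{lawler2010random}). For $n$ of the same parity as $x_1+\dots+x_d$ it states
\[
\p(S_n=x)=2\Big(\frac{d}{2\pi n}\Big)^{d/2}e^{-d\abs{x}^2/(2n)}+O\big(n^{-(d+2)/2}\big),
\]
and $\p(S_n=x)=0$ for the other parity. Fix a small $\varepsilon>0$ and split the sum over $n$ into three ranges.

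\emph{Small $n$, $n\le \abs{x}^{2-\varepsilon}$.} A Hoeffding (Azuma) bound applied coordinatewise gives $\p(S_n=x)\le \p(\abs{S_n}\ge \abs{x})\le 2d\,e^{-c\abs{x}^2/n}\le 2d\,e^{-c\abs{x}^{\varepsilon}}$. Summing over at most $\abs{x}^2$ such $n$, this range contributes $o(\abs{x}^{2-d})$.

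\emph{Remaining $n$, error term.} For $n>\abs{x}^{2-\varepsilon}$ the LCLT error terms sum to
\[
O\Big(\sum_{n>\abs{x}^{2-\varepsilon}} n^{-(d+2)/2}\Big)=O\big(\abs{x}^{-(2-\varepsilon)d/2}\big),
\]
which is $o(\abs{x}^{2-d})$ as soon as $\varepsilon<4/d$.

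\emph{Remaining $n$, main term.} The Gaussian main term is summed over $n$ of a fixed parity, so the factor $2$ is cancelled by the parity restriction. The sum is then a Riemann sum for
\[
\int_0^\infty \Big(\frac{d}{2\pi s}\Big)^{d/2}e^{-d\abs{x}^2/(2s)}\,ds .
\]
Substituting $s=\abs{x}^2 v$ shows this integral equals $a_d\abs{x}^{2-d}$. The integrand is smooth and unimodal on the scale $s\asymp\abs{x}^2$, so the Riemann-sum error is a relative $O(\abs{x}^{-2})$ correction. Discarding the integral over $s\le\abs{x}^{2-\varepsilon}$ costs only an exponentially small amount.

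The main obstacle is this bookkeeping of error terms: the crude bound of Lemma~\ref{lem:lclt} alone does not suffice, and one needs the error-term LCLT together with the large-deviation cutoff at small times. Once the three ranges are controlled, combining $G(x)=(a_d+o(1))\abs{x}^{2-d}$ with $\p(x\in S[0,\infty))=\gamma G(x)$ gives \eqref{eq:hit_prob}.
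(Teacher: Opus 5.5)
Your proof is correct. It is not a variant of anything in the paper, because the paper gives no proof of this lemma: it quotes the result from R\'ev\'esz (Lemma~17.8 in \cite{revesz2013random}) and uses it as a black box.

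What you supply is the standard self-contained derivation, and each step checks out:
\begin{itemize}
\item The strong Markov property at $\tau_x$ gives $\p(\tau_x<\infty)=G(x)/G(0)$.
\item The geometric law \eqref{eq:local_time_0} gives $G(0)=\e[\xi(\infty,0)]=\gamma^{-1}$.
\item The Green's function asymptotics follow from the error-term LCLT. The Hoeffding cutoff at $n\le\abs{x}^{2-\varepsilon}$ disposes of small times. The condition $\varepsilon<4/d$ makes the summed LCLT errors $O(\abs{x}^{-(2-\varepsilon)d/2})=o(\abs{x}^{2-d})$. The spacing-$2$ Riemann sum costs only $O(\abs{x}^{-d})$, the size of the Gaussian term at its peak $s=\abs{x}^2$.
\item Your constant $C_d=\gamma\cdot\frac d2\Gamma(\frac d2-1)\pi^{-d/2}$ is right.
\end{itemize}

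Compared with the citation, your route gives an explicit constant expressed through the same $\gamma$ that governs the local-time tails. The price is that you need the sharper error-term LCLT rather than only the crude bound \eqref{eq:lclt}.

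However, the paper uses the lemma only to get $t_y\lesssim n^{-\eta/8}$ for $\abs{y}\ge n^{\eta/8}$ in \eqref{eq:large_dis}. There, all that matters is that $t_y\log n\to0$ uniformly. For that purpose, your small-$n$ cutoff combined with \eqref{eq:lclt} summed over $n>\abs{y}^{2-\varepsilon}$ already gives $t_y\lesssim\abs{y}^{-(2-\varepsilon)(d/2-1)}$, which suffices. So neither the sharp constant nor the error-term LCLT is actually needed downstream.
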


\subsection{Local-time tails}

We next record the one-point and two-point local-time tails.

\begin{lemma}[Eq.~(3.1) in \cite{ErdosTaylor1960}]\label{lem:local_time_0}
For any $m\in \mathbb N$ and $s\ge 0$,
\begin{align}\label{eq:local_time_0}
\p\big(\xi(\infty,0)> m\big)=(1-\gamma)^m;
\qquad
\p\big(\ell(\infty,0)> s\big)=e^{-\gamma s}.
\end{align}
In particular, for $\beta>0$, $n\in\mathbb N$ and $t>0$,
\begin{align}\label{eq:local_time_0'}
\begin{aligned}
&\p\big(\xi(\infty,0)>\beta\alpha\log n + u\big)
=c_{\beta,n,u}(1-\gamma)^u n^{-\beta};\\ &\p\big(\ell(\infty,0)>\beta\gamma^{-1}\log t+u\big)=t^{-\beta}e^{-\gamma u}.    
\end{aligned}
\end{align}
\end{lemma}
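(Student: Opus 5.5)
The plan is to prove both identities in \eqref{eq:local_time_0} with the strong Markov property at the successive return times to the origin. Once those hold, \eqref{eq:local_time_0'} follows by substituting the thresholds and simplifying.

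\emph{Discrete time.} Let $\tau_0=0$ and let $\tau_{k}$ be the time of the $k$-th return of $S$ to $0$ after time $0$, with $\tau_k=\infty$ if there are fewer than $k$ returns. Since the time-$0$ visit is counted in $\xi(\infty,0)$,
\[
\{\xi(\infty,0)>m\}=\{\tau_m<\infty\}.
\]
On $\{\tau_{k-1}<\infty\}$, apply the strong Markov property at $\tau_{k-1}$. The walk restarted there is again a simple random walk from $0$, independent of the past. Hence
\[
\p(\tau_k<\infty\mid \tau_{k-1}<\infty)=\p(S_j=0\text{ for some } j\ge1)=1-\gamma .
\]
Inducting over $k=1,\dots,m$ gives $\p(\xi(\infty,0)>m)=(1-\gamma)^m$. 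Equivalently, $\xi(\infty,0)$ is geometric with $\p(\xi(\infty,0)=k)=(1-\gamma)^{k-1}\gamma$ for $k\ge1$.

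\emph{Continuous time.} The continuous-time walk $Y$ visits $0$ in exactly $N:=\xi(\infty,0)$ sojourns of its jump chain. The holding times are i.i.d.\ $\mathrm{Exp}(1)$ and independent of the jump chain $S$. Therefore
\[
\ell(\infty,0)=\sum_{i=1}^N E_i,
\]
where the $E_i$ are i.i.d.\ $\mathrm{Exp}(1)$ and independent of the geometric variable $N$. A geometric$(\gamma)$ sum of i.i.d.\ $\mathrm{Exp}(1)$ variables is $\mathrm{Exp}(\gamma)$. To check this, compute the Laplace transform for $\lambda\ge0$:
\[
\e e^{-\lambda \ell(\infty,0)}=\sum_{k\ge1}\gamma(1-\gamma)^{k-1}(1+\lambda)^{-k}=\frac{\gamma}{\gamma+\lambda}.
\]
This is the transform of $\mathrm{Exp}(\gamma)$, so $\p(\ell(\infty,0)>s)=e^{-\gamma s}$ for $s\ge0$.

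\emph{The consequences \eqref{eq:local_time_0'}.} Since $\xi(\infty,0)$ is integer-valued, for real $x\ge0$ we have $\{\xi(\infty,0)>x\}=\{\xi(\infty,0)>\lfloor x\rfloor\}$. Take $x=\beta\alpha\log n+u$. The first identity in \eqref{eq:local_time_0} gives $(1-\gamma)^{\lfloor\beta\alpha\log n+u\rfloor}$, and the definition of $c_{\beta,n,u}$ rewrites this as $c_{\beta,n,u}(1-\gamma)^u n^{-\beta}$.

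In continuous time, set $s=\beta\gamma^{-1}\log t+u$. Then
\[
e^{-\gamma s}=e^{-\beta\log t-\gamma u}=t^{-\beta}e^{-\gamma u}.
\]

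The only point needing care is that the thresholds must be nonnegative. This holds once $n$ or $t$ is large enough, depending on $\beta$ and $u$, which is the regime in which the lemma is used. For small $n,t$ with a negative threshold the left-hand side equals $1$, so the identity should be read for large $n,t$. There is no genuine obstacle here beyond the standard geometric-sum-of-exponentials computation.
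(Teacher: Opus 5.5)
Your proof is correct and follows the same route the paper relies on. The paper gives no proof of its own and cites Erd\H{o}s--Taylor, where the geometric law comes from the strong Markov property at successive returns to $0$, exactly as you argue; your geometric-sum-of-$\mathrm{Exp}(1)$ computation is the same reasoning the paper uses to deduce Corollary~\ref{cor:CFRRZ} from Proposition~\ref{prop:CFRRZ}. Your caveat that \eqref{eq:local_time_0'} requires nonnegative thresholds, so it holds only for $n,t$ large depending on $\beta,u$, is also correct and worth recording.
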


As a key input for the upward deviation analysis, we will also need a two-point estimate.

\begin{proposition}[Eq.~(4.1) in \cite{csaki2005frequently}]\label{prop:CFRRZ}
Let
\(
t_y:=\p\big(y\in S[0,\infty)\big),\  y\in\mathbb Z^d\setminus\{0\}.
\)
Then $\xi(\infty,0)+\xi(\infty,y)$ has geometric distribution on $\mathbb N^+$ with success probability $\gamma/(1+t_y)$. Namely, for all $u\in\mathbb N$,
\begin{align}\label{eq:CFRRZ1}
\p\big(\xi(\infty,0)+\xi(\infty,y)>u\big)
=\Big(1-\frac{\gamma}{1+t_y}\Big)^u.
\end{align}
Consequently, there exists $\delta\in(0,1)$ such that for all $u\in\mathbb N$,
\begin{align}\label{eq:two_point_discrete}
\sup_{y\in \mathbb Z^d\setminus\{0\}}
\p\big(\xi(\infty,0)+\xi(\infty,y)>2u\big)
\le (1-\gamma)^{(1+\delta)u}
=\p\big(\xi(\infty,0)>u\big)^{1+\delta}.
\end{align}
\end{proposition}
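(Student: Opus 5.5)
The plan is to first prove the exact geometric law \eqref{eq:CFRRZ1} using a symmetry argument and a Green's function computation, and then obtain \eqref{eq:two_point_discrete} from a uniform bound on $t_y$.

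\textbf{Geometric law.} Fix $y\neq 0$ and let $A=\{0,y\}$. I would apply the strong Markov property at the successive visits of $S$ to $A$.

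The key observation is the following symmetry. The map $x\mapsto y-x$ is an automorphism of $\mathbb Z^d$ that preserves the law of simple random walk and swaps $0$ and $y$. Therefore
\[
p:=\p_0\big(S_k\notin A\ \forall k\ge1\big)=\p_y\big(S_k\notin A\ \forall k\ge1\big).
\]
Each time the walk is in $A$, it never returns to $A$ with probability $p$, whichever point of $A$it occupies. Hence the total number of visits $\xi(\infty,0)+\xi(\infty,y)$ is geometric on $\mathbb N^+$ with success probability $p$. It lies in $\mathbb N^+$ because the visit at time $0$ is counted.

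To identify $p$, I would compare expectations. On one hand,
\[
\e\big[\xi(\infty,0)+\xi(\infty,y)\big]=G(0)+G(y),
\]
where $G$ is the Green's function. Lemma~\ref{lem:local_time_0} gives $G(0)=\e\,\xi(\infty,0)=1/\gamma$. By the strong Markov property at the hitting time of $y$, $G(y)=t_y\,G(0)=t_y/\gamma$. On the other hand, the mean of the geometric law is $1/p$. Hence $p=\gamma/(1+t_y)$, which is \eqref{eq:CFRRZ1}.

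\textbf{Uniform bound.} Set $a_y:=\gamma/(1+t_y)$. Then \eqref{eq:two_point_discrete} amounts to showing
\[
\sup_{y\neq0}(1-a_y)^2\le(1-\gamma)^{1+\delta}
\]
for some $\delta>0$. Writing $1-\sqrt{1-\gamma}=\gamma/(1+\sqrt{1-\gamma})$, the condition $(1-a_y)^2<1-\gamma$ is equivalent to $t_y<\sqrt{1-\gamma}$. So the goal is
\[
\sup_{y\neq0}t_y^2<1-\gamma .
\]
This is the main step. I would prove it in three parts.

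First, to return to $0$ it suffices to hit $y$ and then hit $0$ from $y$. By symmetry the second probability also equals $t_y$, so
\[
t_y^2=\p_0(\text{hit }y,\text{ then return to }0)\le \p_0(\text{return to }0)=1-\gamma .
\]

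Second, the inequality is strict for each fixed $y$. The walk can step to a neighbor $e\neq y$ and return to $0$ immediately, and with positive probability it then escapes to infinity without ever visiting $y$. This event is a return to $0$ that is not counted in the left-hand side, so $t_y^2<1-\gamma$.

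Third, Lemma~\ref{lem:Revesz} gives $t_y\to0$ as $|y|\to\infty$. The supremum is therefore attained over finitely many $y$, or else lies near $0$, so $q:=\sup_y t_y^2<1-\gamma$.

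\textbf{Conclusion.} With $q<1-\gamma$, we get $\sup_y(1-a_y)^2<1-\gamma$ uniformly in $y$. Choose $\delta>0$ with $\sup_y(1-a_y)^2\le(1-\gamma)^{1+\delta}$. Raising to the power $u$ gives \eqref{eq:two_point_discrete}, and the final equality there is Lemma~\ref{lem:local_time_0}.
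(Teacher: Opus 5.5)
Your proof is correct. The paper gives no argument for this proposition: it imports both the geometric law \eqref{eq:CFRRZ1} and the uniform bound \eqref{eq:two_point_discrete} from Cs\'aki et al.~\cite{csaki2005frequently}, and the word ``Consequently'' leaves the passage from the first to the second implicit. Your argument is therefore a self-contained replacement rather than a variant of an in-paper proof.

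The symmetry $x\mapsto y-x$ makes the escape probability from $\{0,y\}$ independent of the starting point, which gives the geometric law. Matching its mean $1/p$ with $G(0)+G(y)=(1+t_y)/\gamma$ then identifies $p=\gamma/(1+t_y)$. This avoids solving the first-passage equations for ``return to $0$ before hitting $y$'' and ``hit $y$ before returning to $0$'', the other standard route, which gives the same formula. Your reduction of \eqref{eq:two_point_discrete} to $\sup_{y\neq0}t_y<\sqrt{1-\gamma}$ is the right criterion, and ``hit $y$, then return to $0$'' gives $t_y^2\le 1-\gamma$ cleanly.

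One simplification: your third step, and with it Lemma~\ref{lem:Revesz}, is unnecessary. Your second step actually gives $t_y^2+(2d)^{-2}(1-t_y)\le 1-\gamma$. Your first step gives $1-t_y\ge 1-\sqrt{1-\gamma}>0$ uniformly in $y$. Together these yield a uniform gap directly: $\sup_{y\neq0}t_y^2\le 1-\gamma-(2d)^{-2}(1-\sqrt{1-\gamma})$.

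Even more sharply, a walk from $y\neq0$ must enter the set of neighbours of the origin before reaching $0$. First-step analysis and lattice symmetry give $\p_e(\text{hit }0)=1-\gamma$ for each neighbour $e$. Hence $t_y\le 1-\gamma$ for all $y\neq0$, with equality at neighbours, and so $\sup_y(1-a_y)^2=4(1-\gamma)^2/(2-\gamma)^2<1-\gamma$. This gives an explicit admissible $\delta$.
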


\begin{corollary}\label{cor:CFRRZ}
For any $y\in \mathbb Z^d\setminus\{0\}$, the sum $\ell(\infty,0)+\ell(\infty,y)$ has exponential distribution with mean $(1+t_y)/\gamma$. Namely, for all $s\ge 0$,
\begin{align}\label{eq:two_point_continuous}
\p\big(\ell(\infty,0)+\ell(\infty,y)>s\big)
=\exp\Big(-\frac{\gamma}{1+t_y}s\Big).
\end{align}
\end{corollary}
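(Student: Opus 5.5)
We derive Corollary~\ref{cor:CFRRZ} from the discrete identity \eqref{eq:CFRRZ1} of Proposition~\ref{prop:CFRRZ}. The idea is that continuous-time local time is a geometric number of i.i.d.\ $\mathrm{Exp}(1)$ holding times. Since $Y$ is the discrete walk $S$ run with i.i.d.\ $\mathrm{Exp}(1)$ holding times $(e_k)_{k\ge0}$ independent of $S$, we have
\[
\ell(\infty,0)+\ell(\infty,y)=\sum_{k\ge 0} e_k\,\mathbf 1_{\{S_k\in\{0,y\}\}}
\overset{d}{=}\sum_{j=1}^{N} e'_j,
\qquad N:=\xi(\infty,0)+\xi(\infty,y).
\]
Here $(e'_j)$ is an i.i.d.\ $\mathrm{Exp}(1)$ sequence independent of $N$. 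This holds because, conditionally on the whole path $S$, the holding times at the $N$ visits to $\{0,y\}$ are i.i.d.\ $\mathrm{Exp}(1)$.

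By \eqref{eq:CFRRZ1}, $N$ is geometric on $\mathbb N^+$ with success probability $p:=\gamma/(1+t_y)$, that is, $\p(N=k)=(1-p)^{k-1}p$ for $k\ge1$. It then remains to invoke the classical fact that a geometric$(p)$ sum of i.i.d.\ $\mathrm{Exp}(1)$ variables is $\mathrm{Exp}(p)$. The cleanest route is Laplace transforms: for $\lambda\ge0$,
\[
\e\bigl[e^{-\lambda \sum_{j\le N}e'_j}\bigr]=\sum_{k\ge1}(1-p)^{k-1}p\Bigl(\frac{1}{1+\lambda}\Bigr)^k=\frac{p}{1+\lambda-(1-p)}=\frac{p}{p+\lambda}.
\]
This is the Laplace transform of $\mathrm{Exp}(p)$, which gives \eqref{eq:two_point_continuous} with mean $1/p=(1+t_y)/\gamma$.

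The only point needing any care is the conditional independence claim that justifies replacing the holding times by an independent i.i.d.\ sequence. It follows directly from the construction of $Y$, since the holding times are independent of the jump chain. As a sanity check, the same argument with $y$ removed recovers the one-point identity $\p(\ell(\infty,0)>s)=e^{-\gamma s}$ in \eqref{eq:local_time_0}.
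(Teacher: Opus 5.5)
Your proof is correct and follows the same route as the paper: the geometric law of $\xi(\infty,0)+\xi(\infty,y)$ from Proposition~\ref{prop:CFRRZ}, compounded with i.i.d.\ $\mathrm{Exp}(1)$ holding times that are independent of the jump chain. The paper states this in one line and omits the Laplace-transform computation you supply, which is accurate.
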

This follows from Proposition~\ref{prop:CFRRZ}, since
$\xi(\infty,0)+\xi(\infty,y)$ is geometric with parameter $\gamma/(1+t_y)$,
and conditionally on this value the local time is a sum of i.i.d.\ $\mathrm{Exp}(1)$ variables. 

\section{Proof of Theorem \ref{thm:upwards}}\label{sec:pf_of_upwards}
\subsection{Discrete-time case}

We begin with the discrete-time simple random walk. For $x\in \mathbb{Z}^d$ and integers $0\le n_1 \le n_2$, define the local time accumulated between times $n_1$ and $n_2$ as
\[
\xi([n_1,n_2],x) := \xi(n_2,x) - \xi(n_1 - 1,x),
\]
with $\xi(-1,x):=0$.

To observe an upward deviation, it suffices for a single site to accumulate unusually large local time. This motivates a reformulation of the event in terms of a counting variable. Let
\[m_n:=\lfloor \beta\alpha\log n + u\rfloor,\quad 
\mathcal{N} = \mathcal{N}(n,\beta) := \#\big\{x\in\mathbb{Z}^d:\xi(n,x)> m_n\big\}.
\]
Since \(\xi^*(n)\) is integer-valued, \(\{\xi^*(n)>\beta\alpha\log n+u\}=\{\xi^*(n)>m_n\}\). Then we have
\[
\left\{ \xi^*(n) > m_n \right\} = \left\{ \mathcal{N} \geq 1 \right\}.
\]

To estimate $\p(\mathcal N\ge1)$, we use the first and second moment methods.
The key input is Proposition~\ref{prop:CFRRZ}, which implies that joint thick-point events at two distinct sites are negligible at the scale of the first moment. As a result, the first two moments of $\mathcal N$ are asymptotically equivalent.

\begin{lemma}\label{lem:moment}
As $n \to \infty$,
\begin{align}
	\mathbb{E}[\mathcal{N}] &= \big(1 + o(1)\big)\, c_{\beta,n,u} \gamma\,(1-\gamma)^u n^{1 - \beta}; \label{eq:first_moment}\\
	\mathbb{E}[\mathcal{N}^2] &= \big(1 + o(1)\big)\, c_{\beta,n,u} \gamma\,(1-\gamma)^u n^{1 - \beta}.\label{eq:second_moment}
\end{align}
\end{lemma}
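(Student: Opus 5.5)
We start with the first moment. By linearity, $\mathbb E[\mathcal N]=\sum_x \p(\xi(n,x)>m_n)$. We decompose according to the first hitting time $\tau_x$ of $x$. By the strong Markov property,
\[
\p(\xi(n,x)>m_n)=\sum_{k\le n}\p(\tau_x=k)\,\p(\xi(n-k,0)>m_n).
\]

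For the upper bound we replace $\xi(n-k,0)$ by $\xi(\infty,0)$. Lemma~\ref{lem:local_time_0} gives $\p(\xi(\infty,0)>m_n)=(1-\gamma)^{m_n}$. Hence
\[
\mathbb E[\mathcal N]\le (1-\gamma)^{m_n}\,\mathbb E|S[0,n]|,
\]
and $\mathbb E|S[0,n]|=(1+o(1))\gamma n$ by the classical range asymptotics, which follow from $\p(S_k\notin S[k+1,\infty))\to\gamma$ and a time reversal.

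For the lower bound we restrict to $k\le n-n^{\varepsilon}$. We then need
\[
\p(\xi(\infty,0)>m_n,\ \text{some visit after time } n^{\varepsilon})=o\bigl((1-\gamma)^{m_n}\bigr).
\]
Here $m_n\asymp\log n$ and there are $m_n$ excursions from $0$. Conditioned on exceeding $m_n$, the first $m_n$ returns are i.i.d.\ finite return excursions. Each has a tail $\p(\text{return time}>n^{\varepsilon}\mid\text{return})\lesssim n^{-\varepsilon(d/2-1)}$ by \eqref{eq:late_hit}. A union bound over $m_n=O(\log n)$ excursions gives $o(1)$. The restriction of the range to times $\le n-n^\varepsilon$ still has expected size $(1+o(1))\gamma n$. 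Since $(1-\gamma)^{m_n}=c_{\beta,n,u}(1-\gamma)^un^{-\beta}$, this yields \eqref{eq:first_moment}.

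For the second moment, write $\mathbb E[\mathcal N^2]=\mathbb E[\mathcal N]+\sum_{x\ne y}\p(\xi(n,x)>m_n,\xi(n,y)>m_n)$. It suffices to show the double sum is $o(n^{1-\beta})$. By symmetry we order by which of $x,y$ is hit first, say $x$ at time $k$. Using the strong Markov property and $\xi(n,\cdot)\le\xi(\infty,\cdot)$ after time $k$, the pair term is bounded by
\[
\p(\tau_x\le n)\,\p\bigl(\xi(\infty,0)+\xi(\infty,y-x)>2m_n,\ \xi(\infty,y-x)\ge1\bigr)
\]
(we use the sum bound and drop the extra requirement). We then split according to $|y-x|$.

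\emph{Near pairs, $|y-x|\le R_n:=n^{\eta}$ with $\eta$ small.} The estimate \eqref{eq:two_point_discrete} gives the bound $(1-\gamma)^{(1+\delta)m_n}\lesssim n^{-\beta(1+\delta)}$. Summing over $x$ contributes $\mathbb E|S[0,n]|\lesssim n$, and summing over $y$ contributes $R_n^d$. The total is $n^{1-\beta}\cdot n^{-\beta\delta+d\eta}=o(n^{1-\beta})$ once $\eta<\beta\delta/d$.

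\emph{Far pairs, $|y-x|>R_n$.} Here the two thick events are nearly independent. After accumulating local time $>m_n$ at $x$, the walk must reach $y$ and then accumulate $>m_n$ there. Reaching $y$ occurs either directly, with probability $\lesssim t_{y-x}\lesssim |y-x|^{2-d}$, or we bound crudely. So the pair probability is at most $\p(\tau_x\le n)\p(\tau_y\le n)\,n^{-2\beta}$-type terms plus correlation terms. More concretely, I would bound
\[
\sum_{x\ne y}\p(\tau_x<\tau_y\le n)\,(1-\gamma)^{2m_n}
\lesssim \mathbb E|S[0,n]|^2\, n^{-2\beta}\lesssim n^{2-2\beta}.
\]
This holds because the walk hitting $y$ after $x$ with $\xi$ large at $x$ factorizes: the excursions at $x$ are independent of the later passage to $y$, by splitting at the last visit to $x$.

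This last bound is $o(n^{1-\beta})$ only when $\beta>1$, which is exactly our regime. So the far pairs are fine since $n^{2-2\beta}=o(n^{1-\beta})$. I expect the main obstacle to be making the factorization rigorous, given that excursions from $x$ may themselves visit $y$. I would handle it by conditioning on the $m_n$ returns to $x$. I would require that $y$ is not hit during them, since the probability that they hit a far $y$ is $\lesssim m_n|y-x|^{2-d}$, which I absorb via \eqref{eq:CFRRZ1}. After the last visit to $x$, the strong Markov property at $\tau_y$ gives the factor $(1-\gamma)^{m_n}$. Combining the two regimes gives $\mathbb E[\mathcal N^2]=\mathbb E[\mathcal N]+o(n^{1-\beta})$, which is \eqref{eq:second_moment}.
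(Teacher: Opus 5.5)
Your first-moment argument (first-hitting decomposition plus the range asymptotics $\mathbb E|S[0,n]|\sim\gamma n$) and your near-pair bound are sound. Together they take a genuinely different route from the paper, which works with time pairs through the $(m_n+1)$-st-last-visit representation \eqref{eq:represent}. One small fix in the first-moment lower bound: apply the union bound with per-excursion threshold $n^{\varepsilon}/m_n$, so that the $m_n$ returns are completed by time $n^{\varepsilon}$; this is harmless.

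The gap is in the far-pair case, exactly at the point you flagged. First, the last visit to $x$ is not a stopping time, so ``splitting at the last visit'' does not give the claimed independence. Second, and more seriously, consider the scenario where $y$ is hit during the first $m_n$ excursions from $x$. Your bound $m_n|y-x|^{2-d}$ there is an untimed hitting probability. It does not force $y$ to be reached by time $n$, and $\sum_{y}|y-x|^{2-d}$ diverges. Even after restricting to $|y|\le n$, the sum is of order $n^2$. This scenario then contributes roughly $n\cdot n^{2}\log n\cdot n^{-2\beta}=n^{3-2\beta}\log n$, which is not $o(n^{1-\beta})$ for $\beta\in(1,2]$. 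Appealing to \eqref{eq:CFRRZ1} does not help. It only controls the factor $1+O(m_n t_{y-x})$ relating $(1-\gamma/(1+t_{y-x}))^{2m_n}$ to $(1-\gamma)^{2m_n}$, not the summation over $y$.

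The repair is to keep the time constraint and split only at stopping times. For the walk started at $x$, let $a$ be the number of visits to $x$ before $\tau_y$. The strong Markov property at the $a$-th visit gives $\mathbb P_x(\tau_y\le n,\ \xi(\tau_y,x)\ge a)\le(1-\gamma)^{a-1}\mathbb P_x(\tau_y\le n)$. At $\tau_y$, \eqref{eq:CFRRZ1} then bounds the probability of the remaining at least $\max(m_n+1,2m_n+2-a)$ visits to $\{x,y\}$. Summing over $a$ yields, for $|y-x|>R_n$,
\[
\mathbb P\bigl(\xi(n,x)>m_n,\ \xi(n,y)>m_n,\ \tau_x<\tau_y\bigr)\lesssim m_n(1-\gamma)^{2m_n}\,\mathbb P(\tau_x\le n)\,\mathbb P_x(\tau_y\le n).
\]
This covers every interleaving of visits at once. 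Since $\sum_{x,y}\mathbb P(\tau_x\le n)\mathbb P_x(\tau_y\le n)=(\mathbb E|S[0,n]|)^2\lesssim n^2$, the far pairs total $O(n^{2-2\beta}\log n)=o(n^{1-\beta})$.

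The paper sidesteps this issue entirely. Indexing by times $i<j$ gives the $n^2$ count for free. The split $U_{i,j}+V_{i,j}=2(m_n+1)$, with the Markov property at time $j$, reduces each pair to a one-point tail times a two-point tail and handles interleaving automatically. The price is the paper's separate short-gap and time-shell near-displacement analysis, which your site-pair count of $R_n^d$ neighbours handles more simply.
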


\begin{proof}[Proof of Theorem \ref{thm:upwards} assuming Lemma \ref{lem:moment}]
This follows directly from Lemma \ref{lem:moment} via the first and second moment methods. 	
\end{proof}

We now turn to the proof of Lemma~\ref{lem:moment}. For this, we use the representation
\begin{align}\label{eq:represent}
\mathcal N=\sum_{j=0}^n \mathbf{1}\big\{\xi([j,n],S_j)=m_n+1\big\}.
\end{align}
Indeed, each site $x$ with $\xi(n,x)>m_n$ contributes exactly once, namely at the time of its $(m_n+1)$-st last visit before time $n$.

This representation reduces the moment estimates for $\mathcal N$ to estimates on the event
\(
\big\{\xi([j,n],S_j)=m_n+1\big\}.
\)
The next estimate will be used to justify replacing, away from the terminal time $n$, the event
\(
\big\{\xi([j,n],S_j)=m_n+1\big\}
\) 
by its infinite-horizon counterpart
\(
\big\{\xi([j,\infty),S_j)=m_n+1\big\},
\)
up to a negligible error.
\begin{lemma}\label{lem:accumulate_early}
For any $\beta>1$ and $\varepsilon>0$, and any integer sequence $(k_n)$ satisfying
\(
k_n=\beta\alpha\log n+O(1)
\)
as $n\to\infty$, one has
\begin{align}\label{eq:accumulate_early}
\sup_{j\ge \varepsilon n}
\p\big(\xi(\infty,0)\ge k_n,\ \xi(j,0)<k_n\big)
=o(n^{-\beta}).
\end{align}
\end{lemma}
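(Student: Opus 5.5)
The event says that the walk from $0$ eventually makes at least $k_n$ visits to the origin, but fewer than $k_n$ of them occur by time $j\ge\varepsilon n$. So at least one visit to $0$ happens after time $j$. The plan is to split according to how many visits $i<k_n$ occurred by time $j$, and to use the strong Markov property at the time $T_i$ of the $i$-th visit.

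Write $T_1=0<T_2<\dots$ for the successive visit times to $0$. On the event, there is an index $i\in\{1,\dots,k_n-1\}$ with $T_i\le j<T_{i+1}<\infty$, and we also need $T_{k_n}<\infty$. Two easy facts hold here:
\begin{itemize}
\item Since $k_n\to\infty$, the $k_n$ excursions cannot all be short, and some gap $T_{m+1}-T_m$ must be at least $j/k_n\ge \varepsilon n/k_n$.
\item The excursions from $0$ are i.i.d.
\end{itemize}
This gives the union bound
\[
\p\big(\xi(\infty,0)\ge k_n,\ \xi(j,0)<k_n\big)\le \sum_{m=1}^{k_n-1}\p\big(T_{k_n}<\infty,\ T_{m+1}-T_m\ge \varepsilon n/k_n\big).
\]
By independence of the excursions, each summand equals $(1-\gamma)^{k_n-2}\,\p(\varepsilon n/k_n\le T_2<\infty)$. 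Here $(1-\gamma)^{k_n-2}$ accounts for the $k_n-2$ other returns that must all be finite. The remaining factor is the probability that one excursion returns, but only after a long time.

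To bound this factor, apply \eqref{eq:late_hit} with $m=\lceil \varepsilon n/k_n\rceil$. This gives
\[
\p(\varepsilon n/k_n\le T_2<\infty)\le \p\big(0\in S[m,\infty)\big)\lesssim (k_n/(\varepsilon n))^{d/2-1}.
\]
Combining, the total is at most
\[
k_n\,(1-\gamma)^{k_n-2}\,(k_n/\varepsilon n)^{d/2-1}.
\]
Since $k_n=\beta\alpha\log n+O(1)$, we have $(1-\gamma)^{k_n}\asymp n^{-\beta}$. The bound is therefore $O\big(n^{-\beta}(\log n)^{d/2}\,n^{-(d/2-1)}\big)=o(n^{-\beta})$, because $d\ge3$ makes $d/2-1\ge 1/2$. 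It is also uniform in $j\ge\varepsilon n$, since the bound is monotone in $j$.

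The main point to check carefully is the union-bound step: that the event really forces some inter-visit gap $\ge j/k_n$, and that the strong Markov factorization produces exactly $k_n-2$ factors of $(1-\gamma)$ alongside one late-return factor. For the first part, observe that $T_{i+1}>j$ with $i+1\le k_n$ implies $\sum_{m\le k_n-1}(T_{m+1}-T_m)\ge T_{i+1}>j$, so some gap exceeds $j/k_n$. For the second part, write $\p(\varepsilon n/k_n\le T_2<\infty)$ as the probability of a return to $0$ at some time $\ge m$, which is at most $\p(0\in S[m,\infty))$. The logarithmic loss from $k_n$ is harmless because of the polynomial gain $n^{-(d/2-1)}$, so no sharper input, such as Lemma~\ref{lem:Revesz}, is needed.
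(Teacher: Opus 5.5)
Your proposal is correct and is essentially the paper's argument. Both use a pigeonhole step to force one inter-return gap of length at least $\varepsilon n/k_n$, take a union bound over the index of that gap, and factor each term by the strong Markov property into $(1-\gamma)^{k_n-O(1)}$ times a late-return probability bounded by \eqref{eq:late_hit}. Your indexing, which counts the visit at time $0$ as $T_1$, is in fact slightly more careful than the paper's, whose $T_{k_n}$ should strictly be $T_{k_n-1}$; this does not affect the asymptotics.
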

\begin{proof}
Since $\xi(j,0)$ is nondecreasing in $j$, it suffices to prove that
\[
\p\big(\xi(\infty,0) \ge k_n,\ \xi(\lfloor \varepsilon n\rfloor,0)< k_n\big)
=o(n^{-\beta}).
\]
Let $T_0:=0$, and for $j\ge 1$ define the $j$-th return time to the origin by
\[
T_j:=\inf\{k>T_{j-1}:S_k=0\}.
\]
Then
\[
\big\{\xi(\infty,0)\ge k_n,\ \xi(\lfloor\varepsilon n\rfloor,0)<k_n\big\}
\subseteq
\{T_{k_n}<\infty,\ T_{k_n}>\varepsilon n\}.
\]

On the event $\{T_{k_n}>\varepsilon n\}$, at least one of the $k_n$ gaps
$T_j-T_{j-1}$, $1\le j\le k_n$, must be at least $L_n:=\frac{\varepsilon n}{k_n}$. Hence
\[
\{T_{k_n}<\infty,\ T_{k_n}>\varepsilon n\}
\subseteq
\bigcup_{j=1}^{k_n}\{T_{k_n}<\infty,\ T_j-T_{j-1}\ge L_n\}.
\]
Therefore,
\begin{equation}\label{eq:Aj_new}
\text{LHS of \eqref{eq:accumulate_early}}
\le
\sum_{j=1}^{k_n}\p\big(T_{k_n}<\infty,\ T_j-T_{j-1}\ge L_n\big).
\end{equation}

By the strong Markov property,
\begin{align*}
    &\p\big(T_{k_n}<\infty,\ T_j-T_{j-1}\ge L_n\big)
=
\p(T_{j-1}<\infty)\,
\p\big(T_1\ge L_n,\ T_{k_n-j}<\infty\big)\\
\le\;& \p(T_{j-1}<\infty)\,
\p(L_n\le T_1<\infty)\,
\p(T_{k_n-j}<\infty)\\
=\;&\p\big(\xi(\infty,0)>j-1\big)\,
\p\big(0\in S[\lfloor L_n\rfloor,\infty)\big)\,
\p\big(\xi(\infty,0)>k_n-j\big)
\overset{\eqref{eq:local_time_0}}{\underset{\eqref{eq:late_hit}}{\lesssim}} n^{-\beta} \Big(\frac{n}{\log n}\Big)^{-(d/2 - 1)}.
\end{align*}
Substituting this into \eqref{eq:Aj_new}, we obtain
\[
\text{LHS of \eqref{eq:accumulate_early}}
\lesssim
(\log n)\,n^{-\beta}\Big(\frac{n}{\log n}\Big)^{-(d/2-1)}
=o(n^{-\beta}).\qedhere
\]
\end{proof}
\begin{proof}[Proof of \eqref{eq:first_moment} in Lemma~\ref{lem:moment}]
By \eqref{eq:represent} and the Markov property, 
\begin{align*}
    \e[\mathcal N]
&=\sum_{j=0}^n \p\big(\xi([j,n],S_j)=m_n+1\big)\\
&=\sum_{j=0}^n \p\big(\xi(n-j,0)=m_n+1\big)
=\sum_{j=0}^n \p\big(\xi(j,0)=m_n+1\big).
\end{align*}

Fix $\varepsilon\in(0,1)$ and decompose
\[
\e[\mathcal N]
=
\sum_{0\le j<\varepsilon n}\p\big(\xi(j,0)=m_n+1\big)
+
\sum_{\varepsilon n\le j\le n}\p\big(\xi(j,0)=m_n+1\big)
=:I_1+I_2.
\]

For $I_1$, we simply use the bound
\[
I_1
\le \varepsilon n\,\p\big(\xi(\infty,0)> m_n\big)
\overset{\eqref{eq:local_time_0}}{=}\varepsilon c_{\beta,n,u}(1-\gamma)^u n^{1-\beta}.
\]

We now turn to $I_2$. Uniformly for $j\ge \varepsilon n$,
\begin{align*}
    &\big|\p\big(\xi(j,0)=m_n+1\big)-\p\big(\xi(\infty,0)=m_n+1\big)\big|\\
    \le\,& \p\big(\xi(\infty,0)> m_n,\ \xi(j,0)\le m_n\big)
    +\p\big(\xi(\infty,0)> m_n+1,\ \xi(j,0)\le m_n+1\big)
    \overset{\eqref{eq:accumulate_early}}{=}o(n^{-\beta}).
\end{align*}
Therefore,
\begin{align*}
    \p\big(\xi(j,0)=m_n+1\big)
    =\p\big(\xi(\infty,0)=m_n+1\big)+o(n^{-\beta})
    \overset{\eqref{eq:local_time_0}}{=} c_{\beta,n,u}\gamma(1-\gamma)^u n^{-\beta}+o(n^{-\beta})
\end{align*}
uniformly for $j\ge \varepsilon n$. Summing over such $j$, we obtain
\[
I_2
=
(1-\varepsilon) c_{\beta,n,u}\gamma(1-\gamma)^u n^{1-\beta}
+o(n^{1-\beta}).
\]

Combining the estimates for $I_1$ and $I_2$, we get
\begin{align*}
        (1-\varepsilon)\gamma c_{\beta,n,u}(1-\gamma)^u n^{1-\beta}&+o(n^{1-\beta})
\le
\e[\mathcal N]\\
&\le
\bigl((1-\varepsilon)\gamma+\varepsilon\bigr)c_{\beta,n,u}(1-\gamma)^un^{1-\beta}
+o(n^{1-\beta}).
\end{align*}
Since $\varepsilon>0$ is arbitrary, this proves \eqref{eq:first_moment}.
\end{proof}

\begin{proof}[Proof of \eqref{eq:second_moment} in Lemma \ref{lem:moment}]
Recall from \eqref{eq:represent} that
\[
\mathcal N=\sum_{j=0}^n X_j,
\qquad
X_j:=\mathbf 1\{\xi([j,n],S_j)=m_n+1\}.
\]
Hence
\[
\e[\mathcal N^2]-\e[\mathcal N]
=2\sum_{0\le i<j\le n}\e[X_iX_j].
\]
Moreover, if $i<j$ and $S_i=S_j$, then $X_iX_j=0$, since
\(
\xi([i,n],S_i)\ge \xi([j,n],S_j)+1.
\)
Therefore, it suffices to show that
\begin{align}\label{eq:different_ij}
\sum_{0\le i<j\le n}\e\big[X_iX_j\mathbf 1_{\{S_i\ne S_j\}}\big]
=o(n^{1-\beta}).
\end{align}

Let $\delta\in(0,1)$ be the constant from \eqref{eq:two_point_discrete}, and fix $\varepsilon$ small enough such that
\begin{align}\label{eq:eps_small}
    (1+\delta)\varepsilon \beta /2+\varepsilon<\delta\beta\qquad\text{and}\qquad \varepsilon<\beta-1.
\end{align}
We first consider the short-time regime $j-i\le n^\varepsilon$.

For $0\le i<j\le n$, define
\[
U_{i,j}:=\xi([i,j-1],S_i),
\qquad
V_{i,j}:=\xi([j,n],S_i)+\xi([j,n],S_j).
\]
If $X_i=X_j=1$ and $S_i\ne S_j$, then
\[
U_{i,j}+V_{i,j}
=\xi([i,n],S_i)+\xi([j,n],S_j)
=2(m_n+1).
\]
Set
\(
K:=\Big\lfloor\frac1\varepsilon\Big\rfloor+1.
\)
It follows that
\begin{align*}
    &\{X_i=X_j=1,\ S_i\ne S_j\}\\
\subseteq\,&
\bigcup_{k=1}^{K}
\Bigl\{
U_{i,j}\in[(k-1)\varepsilon (m_n+1),\ k\varepsilon (m_n+1)),\
V_{i,j}\ge (2-k\varepsilon)(m_n+1),\
S_i\ne S_j
\Bigr\}.
\end{align*}
Therefore,
\begin{align*}
&\e\big[X_iX_j\mathbf 1_{\{S_i\ne S_j\}}\big]\\
\le\,&
\sum_{k=1}^{K}
\p\Bigl(
U_{i,j}\in[(k-1)\varepsilon (m_n+1),\ k\varepsilon (m_n+1)),\
V_{i,j}\ge (2-k\varepsilon)(m_n+1),\
S_i\ne S_j
\Bigr)\\
\le\,& \sum_{k=1}^{K}\p\big(\xi(\infty,0)\ge (k-1)\varepsilon (m_n+1)\big)
\sup_{y\in\mathbb Z^d\setminus\{0\}}
\p\big(\xi(\infty,0)+\xi(\infty,y)\ge (2-k\varepsilon)(m_n+1)\big)\\
\le\,&
\sum_{k=1}^{K}
(1-\gamma)^{(k-1)\varepsilon (m_n+1)+(1+\delta)(1-k\varepsilon/2)(m_n+1)}\\
\le\,& K(1-\gamma)^{(1+\delta)(1-\varepsilon/2)m_n}\le C n^{-(1+\delta)\beta +(1+\delta)\varepsilon \beta /2},
\end{align*}
for some constant $C=C(\varepsilon,\beta)>0$.

Summing over all pairs $(i,j)$ with $j-i\le n^\varepsilon$, we get
\begin{align}\label{eq:sum_small}
\sum_{\substack{0\le i<j\le n:\\ j-i\le n^\varepsilon}}
\e\big[X_iX_j\mathbf 1_{\{S_i\ne S_j\}}\big]
&\le
C\,n^{1+\varepsilon}n^{-(1+\delta)\beta+(1+\delta)\varepsilon \beta /2}
\overset{\eqref{eq:eps_small}}{=}o(n^{1-\beta}).
\end{align}

Thus the short-time contribution is $o(n^{1-\beta})$. It remains to estimate the sum over pairs with $j-i>n^\varepsilon$. For this regime, we further localize the gap $j-i$. More precisely, for $\eta\in[\varepsilon,1]$, we consider pairs satisfying
\(
n^\eta\le j-i<n^{\frac{10}{9}\eta},
\)
and on each such block we distinguish between the two spatial regimes
\[
0<|S_i-S_j|<n^{\eta/8}
\qquad\text{and}\qquad
|S_i-S_j|\ge n^{\eta/8}.
\]

We begin with the near-displacement case.

\begin{lemma}\label{lem:sum_eta}
For any $\eta>0$,
\begin{align}\label{eq:sum_eta}
\sum_{\substack{0\le i<j\le n:\\ j-i\in[n^\eta,n^{\frac{10}{9}\eta})}}
\e\bigl[X_iX_j\mathbf 1_{\{0<|S_i-S_j|<n^{\eta/8}\}}\bigr]
=o(n^{1-\beta}).
\end{align}
\end{lemma}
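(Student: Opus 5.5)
Fix $\eta>0$ and consider a pair $i<j$ with $j-i\in[n^\eta,n^{\frac{10}{9}\eta})$ and $0<|S_i-S_j|<n^{\eta/8}$. The idea is that the pair must already be costly at the level of one thick point, and an extra factor comes from the walk returning within distance $n^{\eta/8}$ of $S_i$ after time $n^\eta$. This extra factor is polynomially small, and it beats the entropy from counting the pairs.

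The plan is to throw away most of the constraint coming from $X_j$ and keep only three pieces of information. First, $S_i$ collects $m_n+1$ visits during $[i,n]$. Second, $|S_j-S_i|<n^{\eta/8}$. Third, the counting of $j$ is restricted to the window $j-i<n^{\frac{10}{9}\eta}$. The visits to $S_i$ during $[i,n]$ split into those during $[i,j-1]$, which number $U_{i,j}$, and those during $[j,n]$. I also use the bound $\xi([j,n],S_i)\le \xi([j,\infty),S_i)$. Applying the Markov property at time $j$ gives
\[
\e\bigl[X_iX_j\mathbf 1_{\{0<|S_i-S_j|<n^{\eta/8}\}}\bigr]\le \sum_{a+b=m_n+1}\p\bigl(U_{i,j}\ge a,\ 0<|S_j-S_i|<n^{\eta/8}\bigr)\,\sup_{y}\p\bigl(\xi(\infty,y)\ge b\bigr).
\]
Only the marginal tail of the future local time at a single site is needed here. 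By \eqref{eq:local_time_0}, $\sup_y\p(\xi(\infty,y)\ge b)\le(1-\gamma)^{b-1}$.

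Next I would bound the first factor. Let $T_a$ be the time of the $a$-th return of the walk to $S_i$ after time $i$; by the strong Markov property at $T_a$, the walk afterwards makes excursions from $S_i$. I would split according to whether the last visit to $S_i$ before time $j$ lies in $[i,j-(j-i)/2]$ or in $[j-(j-i)/2,j]$.

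In the first case, the walk starts at $S_i$ and, after at least $(j-i)/2\ge n^\eta/2$ further steps, lies within distance $n^{\eta/8}$ of $S_i$. By \eqref{eq:uniform_in_range} this costs at most $(n^{\eta/8}/n^{\eta/2})^d=n^{-3\eta d/8}$, after a union bound over the last-visit time.

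In the second case the $a$ visits are spread over an interval of length at least $(j-i)/2$. One of the at most $a\lesssim\log n$ inter-visit gaps is therefore at least $n^\eta/(2\log n)$. Exactly as in the proof of Lemma~\ref{lem:accumulate_early}, this costs an extra factor $\lesssim (n^{\eta}/\log n)^{-(d/2-1)}$ via \eqref{eq:late_hit}.

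In both cases I expect
\[
\p\bigl(U_{i,j}\ge a,\ 0<|S_j-S_i|<n^{\eta/8}\bigr)\lesssim (\log n)^{C}(1-\gamma)^{a}\,n^{-\eta(d/2-1)}.
\]
One point needs care in the first case: the factor $(1-\gamma)^a$ must be kept together with the late-return cost. I would decompose at the $a$-th visit, after which the walk must be near $S_i$ at the fixed time $j$. If $j-T_a\ge (j-i)/2$, \eqref{eq:uniform_in_range} applies directly. Otherwise $T_a\ge i+(j-i)/2$, and a long gap occurs among the first $a$ visits.

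Summing over $a+b=m_n+1$ gives a bound of order $(\log n)^{C+1}n^{-\beta}n^{-\eta(d/2-1)}$ for each pair. The number of pairs is at most $n\cdot n^{\frac{10}{9}\eta}$, so the total is at most
\[
(\log n)^{C+1}\,n^{1-\beta}\,n^{\frac{10}{9}\eta-\eta(d/2-1)}.
\]
For $d\ge 4$ the exponent satisfies $\frac{10}{9}\eta-\eta(d/2-1)<0$, and the sum is $o(n^{1-\beta})$.

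The main obstacle is $d=3$, where the late-return factor is only $n^{-\eta/2}$. This is too weak against the pair count $n^{\frac{10}{9}\eta}$. The fix is to use the spatial constraint more sharply: count pairs by summing over $j$ the probability that $S_j$ falls within $n^{\eta/8}$ of $S_i$, rather than multiplying a pair count by a uniform per-pair bound. I would sum the first-case estimate $n^{-3\eta d/8}$ over the $n^{\frac{10}{9}\eta}$ admissible values of $j$. For $d=3$ this gives $n^{\frac{10}{9}\eta-\frac{9}{8}\eta}=n^{-\eta/72}$, a polynomial gain. The second case I would likewise route through \eqref{eq:uniform_in_range} applied after the long gap, instead of through \eqref{eq:late_hit}. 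Making this bookkeeping work, by decomposing at the last visit so that both the geometric factor $(1-\gamma)^{m_n}$ and the Gaussian confinement factor $(n^{\eta/8-\eta/2})^d$ appear together, is where I expect the real work to be.
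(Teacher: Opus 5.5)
There is a genuine gap: as written, your bound only yields the lemma for $d\ge5$. In your second case (last visit to $S_i$ before $j$ in the second half of $[i,j]$), the only decay you extract is the late-return factor $n^{-\eta(d/2-1)}$ from \eqref{eq:late_hit}. The total is then $n^{1-\beta}\,n^{\frac{10}{9}\eta-\eta(d/2-1)+o(1)}$. This is not $o(n^{1-\beta})$ for $d=3$, as you note. It also fails for $d=4$, where the exponent is $\frac{10}{9}\eta-\eta=\frac{\eta}{9}>0$, so your claim that $d\ge4$ is fine is an arithmetic slip.

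Your suggested repair, applying \eqref{eq:uniform_in_range} ``after the long gap'', does not work as stated. After the last visit, the remaining time until $j$ can be $O(1)$, so the confinement $|S_j-S_i|<n^{\eta/8}$ costs nothing there. To close this case along your route you would need two further estimates. First, the pointwise bound $\mathbb P(T_1=s)\le\mathbb P(S_s=0)\lesssim s^{-d/2}$ from \eqref{eq:lclt} for the long excursion. Second, its convolution with $\sum_{s\ge1}\min\{1,(n^{\eta/8}s^{-1/2})^d\}\lesssim n^{\eta/4}$ over the remaining time. Together these give roughly $n^{-\eta d/2+\eta/4+o(1)}$, which does beat $n^{\frac{10}{9}\eta}$ when $d=3$, but none of this is in your proposal.

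The real missing idea is simpler: you discarded the wrong indicator. The visits to $S_i$ during $[i,j)$ are built from the same increments as $S_j-S_i$, and that coupling is what forces your case analysis. By contrast, $X_j=\mathbf 1\{\xi([j,n],S_j)=m_n+1\}$ is a function of $(S_{j+m}-S_j)_{m\ge0}$ alone. It is therefore independent of $S_j-S_i$ by the Markov property at time $j$. Bounding $X_i\le 1$ and keeping $X_j$ gives, for every admissible pair and with no case distinction,
\[
\mathbb E\bigl[X_iX_j\mathbf 1_{\{0<|S_i-S_j|<n^{\eta/8}\}}\bigr]
\le\mathbb P\bigl(0<|S_{j-i}|<n^{\eta/8}\bigr)\,\mathbb P\bigl(\xi(\infty,0)>m_n\bigr)
\lesssim\Bigl(\frac{n^{\eta/8}}{n^{\eta/2}}\Bigr)^{d}n^{-\beta}\le n^{-\frac98\eta-\beta},
\]
by \eqref{eq:uniform_in_range} and $3d/8\ge 9/8$. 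Multiplying by the at most $n^{1+\frac{10}{9}\eta}$ pairs gives $n^{1-\beta-\eta/72}=o(n^{1-\beta})$ uniformly in $d\ge3$. This is the paper's proof. It is exactly your first-case computation, which holds for all pairs once the thick-point constraint sits at $S_j$ rather than at $S_i$.
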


\begin{proof}
Fix $0\le i<j\le n$ with $j-i\in[n^\eta,n^{\frac{10}{9}\eta})$. By the Markov property,
\begin{align*}
&\e\bigl[X_iX_j\mathbf 1_{\{0<|S_i-S_j|<n^{\eta/8}\}}\bigr]
\le
\e\bigl[\mathbf 1_{\{0<|S_i-S_j|<n^{\eta/8}\}}X_j\bigr]\\
\le\,&
\p\bigl(0<|S_{j-i}|<n^{\eta/8}\bigr)\,
\p\bigl(\xi(\infty,0)> m_n\bigr)\overset{\eqref{eq:uniform_in_range}}{\lesssim}\Bigl(\frac{n^{\eta/8}}{(j-i)^{1/2}}\Bigr)^d n^{-\beta}\le n^{-\frac{9}{8}\eta-\beta}.
\end{align*}
Therefore,
\[
\sum_{\substack{0\le i<j\le n:\\ j-i\in[n^\eta,n^{\frac{10}{9}\eta})}}
\e\bigl[X_iX_j\mathbf 1_{\{0<|S_i-S_j|<n^{\eta/8}\}}\bigr]
\lesssim
n^{1+\frac{10}{9}\eta}\,n^{-\frac{9}{8}\eta-\beta}
=o(n^{1-\beta}).\qedhere
\]
\end{proof}

Let
\[
K=K(\varepsilon):=\max\Bigl\{k\ge 0:\Bigl(\frac{10}{9}\Bigr)^k\varepsilon\le 1\Bigr\},
\qquad
\eta_k:=\Bigl(\frac{10}{9}\Bigr)^k\varepsilon.
\]
Applying Lemma~\ref{lem:sum_eta} on each shell, we obtain
\begin{align}\label{eq:small_dis}
\sum_{k=0}^K
\sum_{\substack{0\le i<j\le n:\\ j-i\in[n^{\eta_k},\,n^{\frac{10}{9}\eta_k})}}
\e\bigl[X_iX_j\mathbf 1_{\{0<|S_i-S_j|<n^{\eta_k/8}\}}\bigr]
=o(n^{1-\beta}).
\end{align}

We next consider the far-displacement case. By \eqref{eq:hit_prob}, uniformly for $y\in\mathbb Z^d$ with $|y|\ge n^{\eta/8}$,
\[
t_y\lesssim |y|^{-(d-2)}\le n^{-\eta/8}.
\]
Hence, by \eqref{eq:CFRRZ1}, uniformly for $u\in[0,2(m_n+1)]$ and $|y|\ge n^{\eta/8}$,
\begin{align}\label{eq:large_dis}
\p\bigl(\xi(\infty,0)+\xi(\infty,y)\ge u\bigr) =
\bigl(1-\gamma/(1+t_y)\bigr)^{u-1} =
(1-\gamma)^{u-1}\Bigl(1+O\Bigl(\frac{\log n}{n^{\eta/8}}\Bigr)\Bigr).
\end{align}

Repeating the argument from the short-time regime, but using \eqref{eq:large_dis} in place of \eqref{eq:two_point_discrete}, we obtain
\[
\e\bigl[X_iX_j\mathbf 1_{\{|S_i-S_j|\ge n^{\eta/8}\}}\bigr]
\le C n^{-(2\beta-\varepsilon)}
\]
for some constant $C=C(\varepsilon,\beta)>0$, uniformly for all $0\le i<j\le n$ satisfying
\(
j-i\in[n^\eta,n^{\frac{10}{9}\eta}).
\)
Therefore,
\begin{align}\label{eq:big_dis}
\sum_{k=0}^K
\sum_{\substack{0\le i<j\le n:\\ j-i\in[n^{\eta_k},\,n^{\frac{10}{9}\eta_k})}}
\e\bigl[X_iX_j\mathbf 1_{\{|S_i-S_j|\ge n^{\eta_k/8}\}}\bigr]
\le
(K+1)n^2\cdot C n^{-(2\beta-\varepsilon)}\overset{\eqref{eq:eps_small}}{=}
o(n^{1-\beta}).
\end{align}

Combining \eqref{eq:sum_small}, \eqref{eq:small_dis}, and \eqref{eq:big_dis}, we obtain \eqref{eq:different_ij}. This completes the proof of \eqref{eq:second_moment}.
\end{proof}

\subsection{Continuous-time case}

We now turn to the continuous-time walk. Couple the discrete-time and continuous-time simple random walks $(S_n)_{n\ge0}$ and $(Y_t)_{t\ge0}$ by
\[
Y_t=S_{N_t},
\]
where $(N_t)_{t\ge0}$ is an independent rate-$1$ Poisson process. Thus $(S_n)$ is the discrete skeleton of $(Y_t)$. For $n\in\mathbb N$, let
\[
N^{-1}(n):=\inf\{t\ge0:N_t=n\}
\]
be the $n$-th jump time of $Y$. We define the local time sampled at jump times by
\begin{align}\label{def:tilde_ell}
\widetilde{\ell}(n,x):=\ell(N^{-1}(n+1),x),
\qquad
\widetilde{\ell}^{*}(n):=\max_{x\in\mathbb Z^d}\widetilde{\ell}(n,x)=\ell^{*}(N^{-1}(n+1)).
\end{align}
For integers $1\le n_1\le n_2$, write
\[
\widetilde{\ell}([n_1,n_2],x):=\widetilde{\ell}(n_2,x)-\widetilde{\ell}(n_1-1,x).
\]

Fix $\alpha\in(1/2,1)$. Standard moderate deviations for the Poisson process imply that there exists $C=C(\alpha)>0$ such that for all $t\ge1$,
\[
\p\Bigl(\bigl|N_t-t\bigr|>t^\alpha\Bigr)\le e^{-Ct^{2\alpha-1}}.
\]
As a consequence,
\begin{align}\label{eq:moderate}
\p\Bigl(
\widetilde{\ell}^{*}(\lfloor t-t^\alpha\rfloor)
\le
\ell^{*}(t)
\le
\widetilde{\ell}^{*}(\lfloor t+t^\alpha\rfloor)
\Bigr)
\ge 1-e^{-Ct^{2\alpha-1}}.
\end{align}

Therefore, to prove the upward deviation asymptotics for $\ell^{*}(t)$, it suffices to show the following.

\begin{proposition}
Set
\(
b_n:=\beta\gamma^{-1}\log n + u
\). As $n\to\infty$,
\begin{align}\label{eq:upward_ell*}
\p\bigl(\widetilde{\ell}^{*}(n)>b_n\bigr)
=
(1+o(1))\,\gamma e^{-\gamma u}\,n^{-(\beta-1)}.
\end{align}
\end{proposition}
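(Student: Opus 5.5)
We mirror the discrete-time argument, now for the skeleton-indexed local times $\widetilde\ell$. Set
\[
\widetilde{\mathcal N}:=\#\{x:\widetilde\ell(n,x)>b_n\}.
\]
Then $\{\widetilde\ell^*(n)>b_n\}=\{\widetilde{\mathcal N}\ge1\}$. We aim to show
\[
\e[\widetilde{\mathcal N}]=(1+o(1))\gamma e^{-\gamma u}n^{1-\beta},
\qquad
\e[\widetilde{\mathcal N}^2]=(1+o(1))\gamma e^{-\gamma u}n^{1-\beta},
\]
and then conclude by the first and second moment methods, exactly as for Theorem~\ref{thm:upwards} in discrete time.

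\textbf{Counting representation.} The analogue of \eqref{eq:represent} is
\[
\widetilde{\mathcal N}=\sum_{j=0}^n \widetilde X_j,
\qquad
\widetilde X_j:=\mathbf 1\bigl\{\widetilde\ell([j+1,n],S_j)\le b_n<\widetilde\ell([j,n],S_j)\bigr\}.
\]
Here $j$ is the last visit index (among skeleton times $\le n$) at which the backward-accumulated local time at $S_j$ crosses $b_n$. Each site with $\widetilde\ell(n,x)>b_n$ contributes exactly once.

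\textbf{First moment.} By the Markov property (time reversal is not needed; restarting at time $j$ suffices), $\e[\widetilde X_j]$ equals the probability that the walk started at $0$, run for $n-j$ steps, crosses $b_n$ at the origin exactly during its first holding period. Conditioning on the first holding time $E_0\sim\mathrm{Exp}(1)$ and on the remaining local time $R$ at the origin gives
\[
\e[\widetilde X_j]=\p\bigl(R_{n-j}\le b_n<R_{n-j}+E_0\bigr).
\]
For the infinite-horizon version, $R=\ell(\infty,0)-E_0$. Since $\ell(\infty,0)\sim\mathrm{Exp}(\gamma)$ by \eqref{eq:local_time_0}, $R$ is independent of $E_0$, and a memorylessness computation gives
\[
\p(R\le b<R+E_0)=\p(\ell(\infty,0)>b)-\p(R>b)=e^{-\gamma b}-(1-\gamma)e^{-\gamma b}=\gamma e^{-\gamma b}.
\]
The second equality uses that $R>0$ exactly when the walk returns, which happens with probability $1-\gamma$, and that conditionally on returning $R$ is again $\mathrm{Exp}(\gamma)$. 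At $b=b_n$ this equals $\gamma e^{-\gamma u}n^{-\beta}$.

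To replace $R_{n-j}$ by $R$ for $n-j\ge\varepsilon n$, we need a continuous-time version of Lemma~\ref{lem:accumulate_early}. Its proof carries over verbatim: split into the return times $T_k$, note that the number of returns needed is $O(\log n)$ with overwhelming probability (by the geometric tail together with an exponential-sum tail at the scale $b_n$), and use \eqref{eq:late_hit} for a long gap. The terms with $n-j<\varepsilon n$ are bounded by $\varepsilon n\,\p(\ell(\infty,0)>b_n)$. Letting $\varepsilon\to0$ gives the first moment.

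\textbf{Second moment.} We need
\[
\sum_{i<j,\;S_i\ne S_j}\e[\widetilde X_i\widetilde X_j]=o(n^{1-\beta}),
\]
since pairs with $S_i=S_j$ contribute zero: a site crosses at most once. We repeat the three regimes of the discrete proof. In each, the discrete tail \eqref{eq:two_point_discrete} or \eqref{eq:CFRRZ1} is replaced by the continuous analogue from Corollary~\ref{cor:CFRRZ}.
\begin{itemize}[leftmargin=*]
\item Short gaps $j-i\le n^\varepsilon$. Write $U$ for the local time at $S_i$ accumulated during $[i,j-1]$, and $V$ for the sum of the local times at $S_i$ and $S_j$ after $j$. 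Then $U+V\ge 2b_n$, up to one holding time. The exponential law with mean $(1+t_y)/\gamma$, where $\sup_{y\ne0}t_y<1$, gives a gain of $n^{-(1+\delta)\beta}$ after slicing $U$ into $K$ levels. The stray holding times have exponential tails and only cost constants.
\item Long gaps $j-i\in[n^\eta,n^{10\eta/9})$ with near displacement $0<|S_i-S_j|<n^{\eta/8}$. This case is identical to Lemma~\ref{lem:sum_eta}, using only \eqref{eq:uniform_in_range} and the one-point tail.
\item Long gaps with far displacement $|S_i-S_j|\ge n^{\eta/8}$. Here $t_y\lesssim n^{-\eta/8}$, so $\exp\bigl(-\gamma s/(1+t_y)\bigr)=e^{-\gamma s}(1+o(1))$ for $s\le 2b_n+O(\log n)$. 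The per-pair bound is then $O(n^{-2\beta+\varepsilon})$.
\end{itemize}

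\textbf{Main obstacle.} The only genuinely new point, and the one I expect to need the most care, is handling the boundary holding times. These are the overshoot at the crossing step and the fact that $\widetilde X_j$ depends on the holding time at $j$ itself. We need them not to spoil the memoryless computation giving exactly $\gamma e^{-\gamma b_n}$, nor the two-point slicing.

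For the first moment, conditioning on the skeleton makes it exact.

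For the slicing, I would first truncate on the event that all holding times up to $n$ are at most $2\log n$. This fails with probability at most $n e^{-2\log n}=n^{-1}$. That is too crude against $n^{1-\beta}$ when $\beta>2$, so instead I would use truncation at $C\log n$ with $C$ large, or simply absorb a single $\mathrm{Exp}(1)$ overshoot into the $O(1)$ slack, since the exponential tails compose cleanly.
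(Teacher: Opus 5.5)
Your proposal is correct and follows the paper's route: the same crossing-time representation of $\mathcal N'$, the first and second moment methods, and the three-regime second-moment analysis, with Corollary~\ref{cor:CFRRZ} replacing the discrete two-point bound. The paper omits these details as parallel to the discrete case. Your exact memoryless computation giving $\gamma e^{-\gamma b_n}$ is a useful explicit check, and the boundary-holding-time concern is largely moot: on $\{\widetilde X_i=\widetilde X_j=1,\ S_i\neq S_j\}$ one has $U+V=\widetilde\ell([i,n],S_i)+\widetilde\ell([j,n],S_j)>2b_n$ exactly, so no truncation is needed.
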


To this end, define
\[
\mathcal N'=\mathcal N'(n,\beta):=\#\{x\in\mathbb Z^d:\widetilde{\ell}(n,x)>b_n\}.
\]
Then $\{\widetilde{\ell}^{*}(n)>b_n\}=\{\mathcal N'\ge1\}$. Moreover, $\mathcal N'$ admits the representation
\[
\mathcal N'
=
\sum_{j=0}^n
\mathbf 1\Bigl\{
\widetilde{\ell}([j,n],S_j)>b_n,\ 
\widetilde{\ell}([j+1,n],S_j)\le b_n
\Bigr\}.
\]
Indeed, each site counted by $\mathcal N'$ contributes exactly once, namely at the last jump time at which the accumulated jump-time local time at that site crosses the level $b_n$.

The proof of \eqref{eq:upward_ell*} now proceeds in the same way as in the discrete-time case, with $\widetilde{\ell}$ in place of $\xi$, and with Corollary~\ref{cor:CFRRZ} replacing the corresponding discrete two-point estimate. Since the argument is entirely parallel, we omit the details.

\section{Proof of Theorem \ref{thm:downward}}\label{sec:pf_of_downwards}

In this section, we prove the downward deviation estimate stated in Theorem \ref{thm:downward}. Throughout, we assume that \( \beta \in (0,1] \).

\subsection{Upper bound}\label{sec:UB}
The upper bound is straightforward in both the continuous-time and discrete-time settings. We give the argument for the continuous-time walk, since the discrete-time case is completely analogous.

Fix $\beta'\in(0,\beta)$ and partition the path $Y{[0,t]}$ into sections of equal length $t^{\beta'}$:
\[Y[0,t]\supset\bigcup_{j=1}^{\lfloor t^{1-\beta'}\rfloor}Y\big[(j-1)t^{\beta'},jt^{\beta'}\big].\]
Then the event $\{\ell^*(t)\le \beta\cdot \gamma^{-1}\log t\}$ implies that the maximum local time in each section must be no more than $\beta \cdot \gamma^{-1} \log t=\frac{\beta}{\beta'}\cdot \gamma^{-1} \log t^{\beta'}$. Hence,
\begin{align*}
	&\p\big(\ell^*(t)\le \beta\cdot \gamma^{-1}\log t+u\big)
	\le\p\Big\{\ell^*\big(t^{\beta'}\big)\le \beta\cdot \gamma^{-1}\log t+u\Big\}^{\lfloor t^{1-\beta'}\rfloor}\\
	\overset{\eqref{eq:conti_UD}}{=}&\Big\{1-\big(1+o(1)\big)\gamma\cdot e^{-\gamma u}\big(t^{\beta'}\big)^{-(\beta/\beta'-1)}\Big\}^{\lfloor t^{1-\beta'}\rfloor}
	=\exp\big\{-\big(1+o(1)\big)\gamma e^{-\gamma u} t^{1-\beta}\big\}.
\end{align*}
This yields the desired upper bound.

\subsection{Lower bound}\label{sec:conti_DD}

In this subsection, we prove the lower bound in Theorem~\ref{thm:downward} for the continuous-time simple random walk. For simplicity, we restrict ourselves to the case \(u=0\), since the proof for general \(u\in\mathbb R\) is entirely analogous.

\subsubsection{Reduction to a jump-time statement and the forcing event}

We now fix \(\kappa\in(1-\beta/2,1)\). It is enough to prove the following
jump-time estimate:
\begin{align}
\mathbb{P}\Bigl(
\widetilde{\ell}^{*}\!\bigl(\lfloor n+n^\kappa\rfloor\bigr)
\le \beta \gamma^{-1}\log n
\Bigr)
\ge \exp\Bigl\{-(1+o(1))\gamma n^{1-\beta}\Bigr\},
\qquad n\to\infty .
\label{eq:conti-lower-jumptime}
\end{align}

By the moderate deviation estimate
\eqref{eq:moderate}, with probability at least \(1-\exp\{-Ct^{2\kappa-1}\}\), one has
\[
\widetilde{\ell}^{*}\!\bigl(\lfloor t-t^{\kappa}\rfloor\bigr)
\le \ell^{*}(t)
\le
\widetilde{\ell}^{*}\!\bigl(\lfloor t+t^{\kappa}\rfloor\bigr),
\]
and therefore,
\begin{align*}
&\mathbb{P}\Bigl(\ell^{*}(t)\le \beta\gamma^{-1}\log t\Bigr)\\
\ge\,&
\mathbb{P}\Bigl(
\widetilde{\ell}^{*}\!\bigl(\lfloor t+t^\kappa\rfloor\bigr)
\le \beta\gamma^{-1}\log t
\Bigr)
-
\mathbb{P}\Bigl(
\ell^{*}(t)>\widetilde{\ell}^{*}\!\bigl(\lfloor t+t^\kappa\rfloor\bigr)
\Bigr) \\
\overset{\eqref{eq:conti-lower-jumptime}}{\ge}&
\exp\Bigl\{-(1+o(1))\gamma t^{1-\beta}\Bigr\}
-
\exp\{-Ct^{2\kappa-1}\}
=
\exp\Bigl\{-(1+o(1))\gamma t^{1-\beta}\Bigr\},
\end{align*}
since \(2\kappa-1>1-\beta\). Thus \eqref{eq:conti-lower-jumptime} implies the desired
continuous-time lower bound.

It therefore suffices to prove that for every \(\delta>0\),

\begin{align}
\mathbb{P}\Bigl(
\widetilde{\ell}^{*}\!\bigl(\lfloor n+n^\kappa\rfloor\bigr)
\le \beta \gamma^{-1}\log n
\Bigr)
\ge
\exp\Bigl\{-(1+\delta)\gamma n^{1-\beta}\Bigr\}
\label{eq:conti-lower-jumptime-delta}
\end{align}
for all sufficiently large \(n\). Set \(\widehat n:=\lfloor n+n^\kappa\rfloor\).

We now describe a forcing strategy for the event
\(
\{\widetilde{\ell}^{*}(\widehat n)\le \beta\gamma^{-1}\log n\}.
\)
Fix a small \(\eta>0\), to be chosen later. 
Set
\begin{equation}
\Lambda_n:=\beta\gamma^{-1}\log n-\eta .
\label{eq:Lambda_n}
\end{equation}
The idea is to monitor those sites whose local time first reaches the preliminary level
\(\Lambda_n\), and then force all future holding times at such sites to be very short.
More precisely, once a site reaches level \(\Lambda_n\), we require the walk to leave it within
time \(\eta/2\); if the walk later returns to that site, then the next holding time is required to
be at most \(\eta/4\), then at most \(\eta/8\), and so on. Since
\(
\sum_{k=1}^{\infty}\frac{\eta}{2^{k}}=\eta,
\)
this guarantees that after the local time first reaches \(\Lambda_n\), the total additional local
time accumulated at that site is less than \(\eta\), and hence the final local time remains below
\(\beta\gamma^{-1}\log n\).

We now formalize this construction. Define stopping times \((\tau_j)_{j\ge1}\) by
\begin{equation*}
\begin{split}
    \tau_1:=\,&\inf\big\{t\ge0:\ell(t,Y_t)>\Lambda_n\big\},\qquad \mbox{and for \(j\ge2\),}
\\
\tau_j:=\,&
\inf\Bigl\{
t\ge0:\ell(t,Y_t)>\Lambda_n,\,
Y_t\notin\{Y_{\tau_1},\dots,Y_{\tau_{j-1}}\}
\Bigr\},
\end{split}
\end{equation*}
with the convention \(\inf\varnothing=\infty\). Thus \(\tau_j\) is the first time at which a new
site reaches the level \(\Lambda_n\). Let
\[
\mathcal N:=\sup\big\{j\ge1:\tau_j<N^{-1}(\widehat n)\big\},
\]
with the convention \(\sup\varnothing=0\). Hence \(\mathcal N\) is the number of distinct sites
that reach level \(\Lambda_n\) before the \(\widehat n\)-th jump time. For each \(1\le j\le \mathcal N\), define the holding time immediately after \(\tau_j\) by
\begin{equation}
h_j^{0}:=\inf\big\{s\ge0:Y_{\tau_j+s}\neq Y_{\tau_j}\big\}.
\label{eq:hj0}
\end{equation}
Recursively, for \(k\ge1\), let
\begin{equation}
\tau_j^{k}:=\inf\big\{t\ge \tau_j^{k-1}+h_j^{k-1}:Y_t=Y_{\tau_j}\big\},
\qquad
h_j^{k}:=\inf\big\{s\ge0:Y_{\tau_j^{k}+s}\neq Y_{\tau_j}\big\},
\label{eq:taujk-hjk}
\end{equation}
where \(\tau_j^{k}\) is the \(k\)-th return time to \(Y_{\tau_j}\) after \(\tau_j\), and
\(h_j^{k}\) is the corresponding holding time. We also write
\[
\mathcal M_j:=\sup\big\{k\ge1:\tau_j^{k}<N^{-1}(\widehat n)\big\},
\]
so that \(\mathcal M_j\) is the number of returns to \(Y_{\tau_j}\) before time \(N^{-1}(\widehat n)\).

We now introduce the forcing event
\begin{equation}
B:=
\Bigl\{
h_j^{k}<\eta/2^{k+1}
\ \text{for all }1\le j\le \mathcal N,\ 0\le k\le \mathcal M_j
\Bigr\}.
\label{eq:forcing_event_B}
\end{equation}

By construction, on the event \(B\), every site that ever reaches the level \(\Lambda_n\) can
accumulate at most an additional amount
\(
\sum_{k=0}^{\infty}\eta/2^{k+1}=\eta
\)
of local time afterwards. Hence no site can exceed the level
\(\Lambda_n+\eta=\beta\gamma^{-1}\log n\), and therefore
\[
B\subset
\Bigl\{
\widetilde{\ell}^{*}(\widehat n)\le \beta\gamma^{-1}\log n
\Bigr\}.
\]
Consequently,
\begin{equation}
\mathbb{P}\Bigl(
\widetilde{\ell}^{*}(\widehat n)\le \beta\gamma^{-1}\log n
\Bigr)\ge \mathbb{P}(B).
\label{eq:lower_bound_by_B}
\end{equation}
\subsubsection{Two key propositions and completion of the lower bound}

To estimate \(\mathbb P(B)\), we isolate two ingredients. The first gives the
conditional probability of the forcing event \(B\) once the number of dangerous
sites and the numbers of later returns to them are fixed. The second shows that
these quantities are typically small, with the correct exponential cost.

\begin{proposition}
\label{prop:condi_prob}
We have
\begin{align}\label{eq:condi_prob}
    \mathbb P\Bigl(
B \,\Big|\, \mathcal N,\ (\mathcal M_j)_{1\le j\le \mathcal N}
\Bigr)
=
\prod_{j=1}^{\mathcal N}\prod_{k=0}^{\mathcal M_j}
\bigl(1-e^{-\eta/2^{k+1}}\bigr).
\end{align}
\end{proposition}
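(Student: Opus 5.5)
The plan is to prove \eqref{eq:condi_prob} by separating the randomness of the walk into two parts. One part determines $\mathcal N$ and $(\mathcal M_j)$. The other part consists of the \emph{post-crossing} holding times $h_j^k$. We show the latter are i.i.d.\ $\mathrm{Exp}(1)$ and independent of the former. Since $\p(\mathrm{Exp}(1)<a)=1-e^{-a}$, the product formula then follows immediately.

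\emph{Setup and the key observation.} Write $Y_t=S_{N_t}$, with $E_0,E_1,\dots$ the i.i.d.\ $\mathrm{Exp}(1)$ holding times of the skeleton $(S_m)$. The observation is that the post-crossing holding times never influence $\mathcal N$ or $(\mathcal M_j)$.
\begin{itemize}
\item $\mathcal M_j$ counts the returns of the skeleton to $Y_{\tau_j}$ strictly before the $\widehat n$-th jump. Given the jump index at which $\tau_j$ occurs, $\mathcal M_j$ is a function of the skeleton alone.
\item Whether a new site $x\notin\{Y_{\tau_1},\dots,Y_{\tau_{j-1}}\}$ reaches level $\Lambda_n$ depends only on the skeleton and on the holding times at visits to $x$ itself.
\item Holding times spent at sites already past level $\Lambda_n$, namely the $h_j^k$, only change the local time at those sites. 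They also shift real time, but all cut-offs ($\mathcal N$, $\mathcal M_j$) are expressed in jump indices, via $N^{-1}(\widehat n)$. So these shifts are harmless.
\end{itemize}

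\emph{The crossing holding time $h_j^0$.} At $\tau_j$ the walk is in the middle of a holding interval $E_m$ at site $x=Y_{\tau_j}$. Here $\tau_j$ is exactly the moment at which the accumulated local time at $x$ passes $\Lambda_n$. The residual $h_j^0=E_m-(\Lambda_n-\ell(\text{before this visit},x))$ is, by memorylessness, $\mathrm{Exp}(1)$. It is independent of everything revealed up to $\tau_j$. By the previous paragraph it is also independent of everything that determines the later crossings and $\mathcal M$'s.

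\emph{Formal argument.} I would carry this out by a sequential revelation (filtration) argument. Define the $\sigma$-field $\mathcal G$ generated by the skeleton $(S_m)$ and by all holding times except the family $\{h_j^k\}$. This family is indexed by jump indices that are themselves $\mathcal G$-measurable, or rather determined progressively. One then checks, by induction along the jump index $m$ and using the strong Markov property at each $\tau_j$ and at each return time $\tau_j^k$, two facts:
\begin{itemize}
\item $\mathcal N$ and $(\mathcal M_j)$ are $\mathcal G$-measurable;
\item conditionally on $\mathcal G$, the variables $(h_j^k)_{j\le\mathcal N,\,0\le k\le\mathcal M_j}$ are i.i.d.\ $\mathrm{Exp}(1)$.
\end{itemize}
Equivalently, one can construct the process by replacing each post-crossing holding time (or residual) with a fresh independent exponential. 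This yields an equal-in-law process in which the claimed independence is built in.

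\emph{Main obstacle.} The main obstacle is exactly the bookkeeping for $h_j^0$. Its definition mixes a full exponential with the crossing threshold, so one must verify carefully that conditioning on "$\tau_j$ occurs during this visit" leaves the residual memoryless. One must also verify that it does not alter $\tau_{j+1},\dots$. Once the conditional i.i.d.\ structure is established, we have
\[
\p(B\mid\mathcal G)=\prod_{j=1}^{\mathcal N}\prod_{k=0}^{\mathcal M_j}\bigl(1-e^{-\eta/2^{k+1}}\bigr),
\]
which is a function of $\mathcal N$ and $(\mathcal M_j)$ only. Taking conditional expectation with respect to $\sigma(\mathcal N,(\mathcal M_j))\subset\mathcal G$ gives \eqref{eq:condi_prob}.
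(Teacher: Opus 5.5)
Your proposal is correct and is essentially the paper's argument. The paper conditions on the discrete-time local times $\xi(\widehat n,\cdot)$, under which the holding times $\sigma(i,x)$ are i.i.d.\ $\mathrm{Exp}(1)$. It describes the event $\{\mathcal N=N,\,(Y_{\tau_j})=(y_j),\,(\mathcal M_j)=(m_j)\}$ site by site, through where the partial sums of the $\sigma(i,x)$ fall relative to $\Lambda_n$. Memorylessness then gives that $h_j^0=\sigma_2(Y_{\tau_j})$ and $h_j^k=\sigma(\mathcal R_j+k,Y_{\tau_j})$ are i.i.d.\ $\mathrm{Exp}(1)$ under this conditioning, which is exactly your mechanism.

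The one thing to fix is your $\sigma$-field $\mathcal G$. As written it contains the full crossing holding time, and hence makes $h_j^0$ $\mathcal G$-measurable. Take $\mathcal G$ instead to be generated by the skeleton and, at each site, the partial sums of holding times truncated at level $\Lambda_n$. This removes the circularity you flagged, and it is exactly the information the paper's site-by-site description uses.
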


\begin{proposition}
\label{prop:NM_bound}
For every \(\delta>0\), one can choose \(\eta>0\) sufficiently small so that
there exist constants \(b<1-\beta\) and \(c>0\) satisfying
\begin{align}\label{eq:NM_bound}
    \mathbb P\Bigl(
\mathcal N\le n^b,\ 
\mathcal M_j\le c\log n \ \text{for all }1\le j\le \mathcal N
\Bigr)
\ge
\exp\Bigl\{-(1+\delta/2+o(1))\gamma n^{1-\beta}\Bigr\}.
\end{align}
\end{proposition}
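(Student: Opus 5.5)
The plan is to construct an explicit event \(A\) of the right cost, and then show that on most of \(A\) both \(\mathcal N\) and the \(\mathcal M_j\) are small. Throughout, the only earlier results used are the one-block upward asymptotics of Theorem~\ref{thm:upwards} and the tail, hitting and heat-kernel bounds of Section~\ref{sec:preliminaries}.

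\textbf{The event \(A\).} Fix \(\beta'\in(0,\beta)\) close to \(\beta\), and cut the jump-time path \(S[0,\widehat n]\) into consecutive blocks \(I_1,I_2,\dots\) of length \(L:=n^{\beta'}\), about \(n^{1-\beta'}\) of them. I would first treat \(I_i\) as a single block and then refine to slightly shortened blocks. Let \(A_i\) be the event that, inside \(I_i\) alone, two things hold. First, no site has jump-time local time exceeding \(\Lambda_n-\eta\). Second, no site is visited more than \(\tfrac c2\log n\) times.
\begin{itemize}
\item The \(A_i\) depend on disjoint sets of increments, so they are independent.
\item By the continuous-time part of Theorem~\ref{thm:upwards}, applied at scale \(L\) with level \(\frac{\beta}{\beta'}\gamma^{-1}\log L-2\eta\), we get
\[
\mathbb P(A_i^c)=(1+o(1))\gamma e^{2\gamma\eta}n^{\beta'-\beta}+L(1-\gamma)^{\frac c2\log n}.
\]
\item Taking \(c\) large makes the second term negligible, so \(\mathbb P(A)\ge \exp\{-(1+O(\eta)+o(1))\gamma n^{1-\beta}\}\) for \(A:=\bigcap_i A_i\). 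For \(\eta\) small, \(1+O(\eta)\le 1+\delta/4\).
\end{itemize}

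\textbf{Decoupling and the bound on \(\mathcal N\).} Let \(E_{i,i'}\) be any event measurable with respect to the increments in \(I_i\cup I_{i'}\) together with the parts of the path lying between and after them. Then
\[
\mathbb P(E_{i,i'}\cap A)\le \mathbb P(E_{i,i'})\prod_{l\ne i,i'}\mathbb P(A_l)\le (1+o(1))\,\mathbb P(E_{i,i'})\,\mathbb P(A),
\]
since each \(\mathbb P(A_l)=1-o(1)\). The same holds for boundedly many blocks. On \(A\), a site reaching \(\Lambda_n\) cannot get its local time from one block. It needs a contribution of at least \(\eta\) from a second block.

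I split such sites by the gap between the two contributing blocks.
\begin{itemize}
\item \emph{Non-adjacent blocks.} Fix the first visit time \(s\) and a return after time gap at least \(L\). By the strong Markov property, the total local time beyond \(s\) is exponential with mean \(\gamma^{-1}\) (Lemma~\ref{lem:local_time_0}), giving a factor \(e^{-\gamma\Lambda_n}\approx n^{-\beta}\). The late return gives a factor \(\lesssim L^{-(d/2-1)}\), as in the proof of Lemma~\ref{lem:accumulate_early} via \eqref{eq:late_hit}; \(\log n\) factors come from splitting over return indices. Summing over \(s\le\widehat n\) gives an expected count of \(n^{1-\beta-\beta'(d/2-1)+o(1)}\).
\item \emph{Adjacent blocks.} Here the return can be fast, which is exactly the problem. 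So the blocks must be replaced by \emph{slightly shortened} blocks \(I_i'\subset I_i\), separated by gaps of length \(n^{\theta}\), \(\theta\) small.
\begin{itemize}
\item A site whose local time is split across a gap must be returned to after time \(\ge n^\theta\) or be visited inside the gap. The first case is handled as above, with the factor \(n^{-\theta(d/2-1)}\) in place of \(L^{-(d/2-1)}\).
\item Visits inside the gaps cover total time \(n^{1-\beta'+\theta}\). The expected number of sites there with local time \(\ge\eta\) and total \(\ge\Lambda_n\) is \(\lesssim n^{1-\beta'+\theta}n^{-\beta}\).
\end{itemize}
\end{itemize}
Choosing \(\beta'\) close to \(\beta\) (with \(\beta'>\theta\)), all contributions are \(\le n^{b-\epsilon}\) for some \(b<1-\beta\). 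Decoupling then gives \(\mathbb E[\mathcal N\mathbf 1_A]\le n^{b-\epsilon}\mathbb P(A)\), and Markov's inequality yields \(\mathbb P(A,\ \mathcal N>n^b)=o(\mathbb P(A))\).

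\textbf{The bound on \(\mathcal M_j\).} On \(A\), a site with \(\mathcal M_j>c\log n\) is visited at most \(\frac c2\log n\) times in any single shortened block. So it must be visited in at least three blocks or gaps; this would be made precise via the same shortened-block scheme, adding to \(A_i\) the visit-count condition for \(I_i'\). The expected number of such sites is at most \(n\cdot\big(n^{-\theta(d/2-1)}\big)^2\) times polylogarithmic factors, plus the gap contribution. I would combine this with the requirement that the site is dangerous, so that the count is of order \(o(1)\), or else simply \(\le n^{b}\) and absorbed with \(\mathcal N\). Decoupling as before gives \(\mathbb P(A\cap\{\exists j:\mathcal M_j>c\log n\})=o(\mathbb P(A))\).

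Combining the three steps gives \eqref{eq:NM_bound} with cost \(1+\delta/4+o(1)\le 1+\delta/2+o(1)\).

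\textbf{Main obstacle.} I expect the hardest part to be the conditional (decoupled) control of adjacent-block and gap contributions. The event \(A\) has superpolynomially small probability, so every bad event must be expressed through boundedly many blocks before the product structure can be used. The shortened-block bookkeeping, meaning the choice of \(\theta\) and \(\beta'\) and the treatment of gaps, must be arranged so that all exponents stay strictly below \(1-\beta\). The case \(\beta=1\) needs \(b<0\), that is \(\mathcal N=0\); this is handled by the same bounds once the expected counts are \(o(1)\).
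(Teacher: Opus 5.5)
\textbf{There is a genuine gap: transferring unconditional estimates to the conditioned measure.} Your construction of \(A\) (shortened blocks with unconstrained gaps), its product structure and its cost agree with the paper, and adding a visit-count condition to \(A_i\) is a harmless variant. The problem is the step from unconditional estimates to \(\mathbb Q=\mathbb P(\cdot\mid A)\).

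Your inequality \(\mathbb P(E_{i,i'}\cap A)\le\mathbb P(E_{i,i'})\prod_{l\ne i,i'}\mathbb P(A_l)\) holds only if \(E_{i,i'}\) is independent of the \(A_l\), \(l\ne i,i'\). That means \(E_{i,i'}\) may depend only on blocks \(i,i'\) and on free gap increments. The events you actually need are not of this form:
\begin{itemize}
\item a return from \(I_i\) to a non-adjacent \(I_{i'}\) depends on the displacements of all intermediate, conditioned blocks;
\item ``the local time at \(S_s\) after \(s\) is \(\ge\Lambda_n\)'' depends on all later blocks.
\end{itemize}
If you factor out only the genuinely independent blocks, you lose \(\prod_{l\ge i}\mathbb P(A_l)^{-1}\), which can be of order \(e^{cn^{1-\beta}}\).

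Consequences for your argument:
\begin{itemize}
\item The non-adjacent count \(n^{1-\beta-\beta'(d/2-1)+o(1)}\), obtained from the unconditional late-return bound \eqref{eq:late_hit}, is not justified under \(\mathbb Q\).
\item The gap term has the same problem, since a site first visited in a gap may be revisited in many later blocks.
\item So does the three-block argument for \(\mathcal M_j\). Its count \(n^{1-2\theta(d/2-1)}\) is moreover not \(o(1)\) for small \(\theta\).
\item ``Or else \(\le n^b\) and absorbed with \(\mathcal N\)'' does not prove the statement, which requires \(\mathcal M_j\le c\log n\) for every \(j\).
\end{itemize}
You correctly flag ``express every bad event through boundedly many blocks'' as the main obstacle, but the proposal gives no mechanism for it.

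\textbf{The missing idea: the free segments must be long, not short.} The paper removes the first \(n^{\beta_2}\) steps of each block, with \(\beta_2\in(\tfrac2d\beta_1,\beta_1)\). These increments are independent of \(A\). Conditioning on everything except the free segments of the intermediate blocks, they alone give
\(\mathbb Q(S_k=S_m\mid\cdot)\lesssim((j-i-1)n^{\beta_2})^{-d/2}\) (Lemma~\ref{lem:SlSm}).
\begin{itemize}
\item Summing over \(m\in I_j\) and \(j\ge i+2\) gives \(n^{\beta_1-(d/2)\beta_2}\to0\). This is exactly where \(\beta_2>\frac2d\beta_1\) is needed. With your \(\theta\) small, the analogous bound \(n^{\beta'-\theta d/2}\) is useless.
\item Iterating this along stopping times gives Proposition~\ref{prop:Jx}: with \(\mathbb Q\)-probability \(1-o(1)\), every site is visited by at most \(M\) blocks.
\item Only then can one fix the visiting set \(J\) with \(\#J\le M\) and pass to \(\mathbb Q_J\), dividing by at most \(M\) factors \(\mathbb P(A_\ell)\).
\item Interval \(J\) is then handled by a delayed-return estimate. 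Non-interval \(J\) is handled by iterated heat-kernel bounds with summable weights \(\prod_r(i_{2r}-i_{2r-1}-1)^{-d/2}\).
\item The bound on \(\mathcal M_j\) follows from \(\#\mathcal J_x\le M\) times the maximal per-block discrete local time.
\end{itemize}
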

\begin{remark}
The exponent \(b\) in Proposition~\ref{prop:NM_bound} is allowed to be negative. In that case, \(\mathcal{N}_n\le n^b\) is equivalent to \(\mathcal{N}_n=0\).
\end{remark}
\begin{proof}[Proof of \eqref{eq:conti-lower-jumptime-delta} assuming Propositions \ref{prop:condi_prob} and \ref{prop:NM_bound}]
   From \eqref{eq:condi_prob} and \eqref{eq:NM_bound}, we have
\begin{align*}
       \p(B)&\ge \p\big(B,\,\mathcal{N}\le n^b,\,\mathcal{M}_j\le c\log n\ \forall\,1\le j\le \mathcal{N}\big)\\
       &\ge \Big[\prod_{j=1}^{\lfloor n^b \rfloor}\prod_{k=0}^{c\log n}\big(1-\exp\{-\eta/2^{k+1}\}\big)\Big]\exp\big\{-\big(1+\delta/2+o(1)\big)\,\gamma n^{1-\beta}\big\}\\
       &\ge \exp\big\{-\big(1+\delta/2+o(1)\big)\,\gamma n^{1-\beta}\big\},
   \end{align*}
which (along with \eqref{eq:lower_bound_by_B}) implies \eqref{eq:conti-lower-jumptime-delta}.
\end{proof}
\subsubsection{Proof of Proposition \ref{prop:condi_prob}}

We begin by introducing some notation.
    For $x\in \mathbb{Z}^d$ and $j\ge 1$, let $T^x_j$ denote the $j$-th hitting time of $x$:
    \begin{align*}
        T^x_1&:=\inf\big\{n\ge 0: S_n=x\big\},\qquad
        T^x_j:=\inf\big\{n > T^x_{j-1}:S_n=x\big\}\quad\text{for }j\ge 2,
    \end{align*}
    with the convention $\inf\emptyset=\infty$. Define the holding time at $x$ after the $j$-th hitting as
    \begin{align*}
        \sigma(j,x):=\mathbf{1}\{T^x_j<\infty\}\cdot\big[N^{-1}(T^x_j+1)-N^{-1}(T^x_j)\big].
    \end{align*}

Next, we focus on the points $\big(Y_{\tau_j}:1\le j\le \mathcal{N}\big)$.
For each such point, we isolate the special holding time during which the total local time reaches the threshold $\beta \cdot \gamma^{-1} \log n$.  
Specifically, for each $1 \le j \le \mathcal{N}$, define
\[\mathcal{R}_j:=\inf\Big\{k\in\mathbb{N}^+: \sum_{i=1}^k\,\sigma(i,Y_{\tau_j})\ge \Lambda_n\Big\}=\xi(\widehat n,Y_{\tau_j})-\mathcal M_j.\]
Then, immediately following the $\mathcal{R}_j$-th visit to $Y_{\tau_j}$, the addition of the holding time $\sigma(\mathcal{R}_j, Y_{\tau_j})$ causes the local time at $Y_{\tau_j}$ to exceed the threshold $\Lambda_n=\beta \gamma^{-1} \log n - \eta$.  
We split this holding time into two parts, representing the contributions before and after this threshold:
    \[\sigma(\mathcal{R}_j,Y_{\tau_j})=\sigma_1(Y_{\tau_j})+\sigma_2(Y_{\tau_j}),\] 
    with
    \begin{align*}
        \sigma_1(Y_{\tau_j}):=\Lambda_n-\sum_{i=1}^{\mathcal{R}_j-1}\sigma(i,Y_{\tau_j}),\qquad       \sigma_2(Y_{\tau_j}):=\sum_{i=1}^{\mathcal{R}_j}\sigma(i,Y_{\tau_j})-\Lambda_n.
    \end{align*}
    \begin{lemma}\label{lem:condi_hold}
        Let $(\gamma_i:i\ge 1)$ be i.i.d.\ $\mathrm{Exp}(1)$ random variables. Conditionally on $\big\{\xi(\widehat n,x):x\in\mathbb{Z}^d\big\}$, $\mathcal{N}$, $\big(Y_{\tau_j}:1\le j\le \mathcal{N}\big)$ and $\big(\mathcal{M}_j:1\le j\le \mathcal{N}\big)$, the following hold: 
        \begin{itemize}
            \item The holding times $\Big\{\sigma(i,x),\,\sigma_1(x)\mathbf{1}_{x\in \{Y_{\tau_j}\}},\, \sigma_2(x)\mathbf{1}_{x\in \{Y_{\tau_j}\}}:1\le i\le \xi(\widehat n,x)\Big\}$ are independent across different $x\in\mathbb{Z}^d$;
            \item For any $x\in \mathbb{Z}^d\setminus\{Y_{\tau_j}\}$, 
            \[\Big(\sigma(i,x):1\le i\le \xi(\widehat n,x)\Big)\overset{d}{=} \Big(\big(\gamma_i:1\le i\le \xi(\widehat n,x)\big)\,\Big|\,\sum_{i=1}^{\xi(\widehat n,x)}\gamma_i<\Lambda_n\Big);\]
            \item For any $x\in \{Y_{\tau_j}\}$,
            \begin{align*}
                &\Big(\sigma_1(x),\,\sigma(i,x):1\le i\le \mathcal{R}_j-1\Big)\overset{d}{=} \Big(\big(\gamma_i:1\le i\le \mathcal{R}_j\big)\,\Big|\,\sum_{i=1}^{\mathcal{R}_j}\gamma_i=\Lambda_n\Big),\\
                &\Big(\sigma_2(x),\sigma(i,x):\mathcal{R}_j+1\le i\le \xi(\widehat n,x)\Big)\overset{d}{=}\big(\gamma_i:1\le i\le \xi(\widehat n,x)-\mathcal{R}_j\big),
            \end{align*}
            and these two collections of holding times are mutually independent.
        \end{itemize}
    \end{lemma}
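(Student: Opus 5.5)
We exploit the fact that under the coupling $Y_t=S_{N_t}$ the discrete skeleton $(S_k)_{k\ge0}$ is independent of the i.i.d.\ $\mathrm{Exp}(1)$ holding times $(N^{-1}(k+1)-N^{-1}(k))_{k\ge0}$. Relabel the holding times site by site. For each $x$, the visit times $T^x_1<T^x_2<\dots$ are determined by $S$, so $\sigma(i,x)$ is the holding time attached to step $T^x_i$. Since the maps $(x,i)\mapsto T^x_i$ are injective, conditionally on the whole skeleton $S$ the family $\{\sigma(i,x): x\in\mathbb Z^d,\ 1\le i\le \xi(\widehat n,x)\}$ is i.i.d.\ $\mathrm{Exp}(1)$, where $\xi(\widehat n,x)$ counts visits among skeleton steps $0,\dots,\widehat n$. (The walk before time $N^{-1}(\widehat n)$ only uses skeleton steps up to $\widehat n-1$, which at most changes the last holding time and can be handled by the same bookkeeping.) We first condition on $S$ and then show that the additional conditioning on $\mathcal N$, $(Y_{\tau_j})$ and $(\mathcal M_j)$ only introduces site-by-site constraints.

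The key observation is that, given $S$, the set of dangerous sites $\{Y_{\tau_j}\}$ and all the $\mathcal M_j$ are deterministic functions of the holding times, and these functions factor over sites. A site $x$ belongs to $\{Y_{\tau_j}\}$ iff $\sum_{i=1}^{\xi(\widehat n,x)}\sigma(i,x)>\Lambda_n$, which depends only on $x$'s own holding times (up to the end-of-horizon truncation just mentioned). For such $x$, we have $\mathcal R_j=\inf\{k:\sum_{i\le k}\sigma(i,x)\ge\Lambda_n\}$ and $\mathcal M_j=\xi(\widehat n,x)-\mathcal R_j$, again functions of $x$'s holding times alone. The ordering $\tau_1<\tau_2<\dots$ is a function of both $S$ and the holding times. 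However, conditioning on the labelled list $(Y_{\tau_j})$ adds, given the unlabelled set and the $\mathcal R_j$, only information that is a function of the crossing indices $T^x_{\mathcal R_j}$ along the skeleton, because $\tau_j$ lies within the holding interval of step $T^{Y_{\tau_j}}_{\mathcal R_j}$. We would therefore condition on the finer $\sigma$-field generated by $S$ together with, for each $x$, the indicator $\mathbf 1\{x\text{ dangerous}\}$ and, if dangerous, $\mathcal R(x)$. The event specifying these values is a product over $x$ of events each measurable with respect to $x$'s holding times. Conditioning an independent family on a product event preserves independence across sites, which gives the first bullet. For a nondangerous $x$, the constraint is $\sum_i\sigma(i,x)<\Lambda_n$ (with $\le$ versus $<$ immaterial), which gives the second bullet.

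The remaining work, and the main obstacle, is the dangerous-site case. There we must check the first-passage decomposition of i.i.d.\ exponentials at level $\Lambda_n$ with a fixed crossing index $\mathcal R_j=r$. Conditional on $\{\sum_{i<r}\gamma_i<\Lambda_n\le\sum_{i\le r}\gamma_i\}$, the memoryless property of $\gamma_r$ given $\gamma_r>\Lambda_n-\sum_{i<r}\gamma_i$ shows that the overshoot $\sigma_2=\sum_{i\le r}\gamma_i-\Lambda_n$ is $\mathrm{Exp}(1)$ and independent of $(\gamma_1,\dots,\gamma_{r-1})$. Moreover, $(\gamma_1,\dots,\gamma_{r-1},\sigma_1)$ with $\sigma_1=\Lambda_n-\sum_{i<r}\gamma_i$ has density proportional to $\mathbf 1\{\text{sum}=\Lambda_n\}$, i.e.\ it is the uniform simplex law, which matches $\bigl((\gamma_i)_{i\le r}\mid\sum\gamma_i=\Lambda_n\bigr)$. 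The later holding times $\sigma(i,x)$ for $i>r$ are unconstrained, hence i.i.d.\ $\mathrm{Exp}(1)$ and independent of the rest. I expect the delicate point to be verifying that conditioning on the labelled ordering of the $\tau_j$ and on $\mathcal N$ (which counts crossings before $N^{-1}(\widehat n)$) does not couple sites. I would handle this by noting that, given $S$ and the sitewise data $(\text{dangerous},\mathcal R)$, both the ordering and $\mathcal N$ are already determined by the skeleton indices $T^x_{\mathcal R(x)}$. Hence further conditioning on them is conditioning on a coarser quantity, and the conditional law computed above still holds after integrating out $S$ beyond the conditioned variables $\xi(\widehat n,\cdot)$.
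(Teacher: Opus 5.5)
Your proof is correct and follows the same route as the paper. In both arguments the holding times are i.i.d.\ $\mathrm{Exp}(1)$ given the skeleton, the conditioning event reduces to the sitewise constraints (a) and (b), and the uniform-simplex and memoryless properties of exponentials give the three bullets. Conditioning first on the full skeleton $S$ rather than only on $\xi(\widehat n,\cdot)$, and then applying the tower property, is a useful refinement: it accounts for the labelled ordering of $(Y_{\tau_j})$ and for the $\widehat n$ versus $\widehat n-1$ horizon mismatch, both of which the paper's one-line identification of the conditioning event passes over.
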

    \begin{proof}
        Conditionally on $\{\xi(\widehat n,x): x \in \mathbb{Z}^d\}$, we have:
        \begin{itemize}
            \item the holding times $\big\{\sigma(j,x):x\in \mathbb{Z}^d,\,1\le j\le \xi(\widehat n,x)\big\}$ are i.i.d. $\text{Exp}(1)$ random variables; 
            \item for any $N\in \mathbb{N}$, distinct points $(y_j : 1 \le j \le N) \subset \mathbb{Z}^d$ and non-negative integers $(m_j : 1 \le j \le N)$, the event 
            \[\Big\{\mathcal{N}=N,\,(Y_{\tau_j})=(y_j),\,(\mathcal{M}_j)=(m_j)\Big\}\] 
            equals the event that
            \begin{enumerate}[label=(\alph*), ref=\alph*]
                \item for any $x\in \mathbb{Z}^d\setminus\{y_j\}$,
                \(\sum_{i=1}^{\xi(\widehat n,x)}\sigma(i,x)<\Lambda_n,\)
                \item for any $x\in\{y_j\}$,
                \(\sum_{i=1}^{R_j-1}\sigma(i,x)\le\Lambda_n<\sum_{i=1}^{R_j}\sigma(i,x),\)
                where $R_j:=\xi(\widehat n,x)-m_j$.
            \end{enumerate}
        \end{itemize}
        The conclusions follow directly from the properties of i.i.d.\ $\text{Exp}(1)$ variables under conditioning.
    \end{proof}
\begin{proof}[Proof of Proposition \ref{prop:condi_prob}]
Recall the definitions of $h^k_j$ in \eqref{eq:hj0} and \eqref{eq:taujk-hjk}. We observe that for $1\le j\le \mathcal{N}$,
    \begin{align*}
        h^0_j=\sigma_2(Y_{\tau_j})\quad\text{and}\quad h^k_j=\sigma(\mathcal{R}_j+k,Y_{\tau_j})\quad\text{for }k\ge 1.
    \end{align*}
    Therefore, by Lemma \ref{lem:condi_hold}, conditionally on $\big\{\xi(\widehat n,x): x \in \mathbb{Z}^d\big\}$, $\mathcal{N}$, $\big(Y_{\tau_j}: 1 \le j \le \mathcal{N}\big)$, and $\big(\mathcal{M}_j : 1 \le j \le \mathcal{N}\big)$, the collection $\big(h^k_j : 1 \le j \le \mathcal{N},\ 0 \le k \le \mathcal{M}_j\big)$ consists of i.i.d.\ $\text{Exp}(1)$ random variables. This immediately yields \eqref{eq:condi_prob}.
\end{proof}
\subsection{Proof of Proposition \ref{prop:NM_bound}}

\subsubsection{Main idea}

Recall the partition of the path \( Y[0, t] \) from Section~\ref{sec:UB}. In a discrete-time analogue, we partition the path \( S[0, n] \) into segments \( S(I_j) \) (see \eqref{eq:partition} below), where each \( I_j \) represents a discrete time interval.

For any point \( x \in \mathbb{Z}^d \), the local time at \( x \) can exceed the threshold $\beta\cdot\gamma^{-1}\log n$ via one of two mechanisms:
\begin{itemize}
    \item \textbf{Local accumulation}: The local time at \( x \) accumulated during a single interval \( I_j \) already exceeds the threshold.
    \item \textbf{Global accumulation}: The local time at \( x \) remains below the threshold in each individual \( I_j \), but the cumulative sum across multiple intervals pushes it above the threshold.
\end{itemize}

Between these two, global accumulation is significantly more costly, as it requires the point \( x \) to be visited across several disjoint intervals.

As in the estimate from Section~\ref{sec:UB}, the probability that no point exceeds the threshold via local accumulation matches the lower bound. We therefore use this estimate to bound the cost of our strategy in handling the local case. On the other hand, the number of points that exceed the threshold through global accumulation is small. Consequently, the cost of controlling their subsequent holding times is also small, and can in principle be bounded by $\exp\left\{ - o(1)\, \gamma n^{1 - \beta} \right\}$.

The remaining issue is to separate the costs of ruling out local accumulation and ruling out global accumulation.
The key observation is that, after conditioning on (a suitable modification of) the event that local accumulation does not occur, the strong Markov property implies that, with high probability, each point \(x\in S[0,\widehat n]\) is visited by at most a constant number \(M=M(\beta_1,\beta_2)\) of segments \(S(I_j)\). In other words, the conditioning effectively constrains only finitely many intervals at each site.

Since the probability that no local threshold is exceeded on any fixed finite collection of intervals is \(1+o(1)\), it follows that, up to a \(1+o(1)\) factor, the event of no local accumulation and the event that our forcing strategy succeeds against global accumulation are asymptotically decoupled. This is the mechanism behind Proposition~\ref{prop:NM_bound}.

\subsubsection{Conditioning on no local accumulation and two key propositions}

We now implement this idea. Fix constants
\[
\beta_1\in(0,\beta),
\qquad
\beta_2\in\Bigl(\frac{2}{d}\beta_1,\beta_1\Bigr).
\]
We partition the path \(S[0,\widehat n]\) into segments of equal discrete length \(\lfloor n^{\beta_1}\rfloor\). Set
\(
K:=\left\lfloor \frac{\widehat n}{\lfloor n^{\beta_1}\rfloor}\right\rfloor+1.
\)
For \(j=1,\dots,K-1\), define
\[
I_j:=\bigl[(j-1)\lfloor n^{\beta_1}\rfloor,\; j\lfloor n^{\beta_1}\rfloor\bigr),
\]
and for the final interval set
\[
I_K:=\bigl[(K-1)\lfloor n^{\beta_1}\rfloor,\; \widehat n\bigr].
\]
We write the corresponding path segments as
\begin{align}\label{eq:partition}
S(I_j)=
\begin{cases}
S\bigl[(j-1)\lfloor n^{\beta_1}\rfloor,\; j\lfloor n^{\beta_1}\rfloor\bigr), & 1\le j\le K-1,\\[3pt]
S\bigl[(K-1)\lfloor n^{\beta_1}\rfloor,\; \widehat n\bigr], & j=K.
\end{cases}
\end{align}
Thus,
$
S[0,\widehat n]=\cup_{j=1}^K S(I_j)$.
Next, we shorten each segment by removing its initial \(\lfloor n^{\beta_2}\rfloor\) steps. For \(j=1,\dots,K-1\), define
\[
I_j':=\bigl[(j-1)\lfloor n^{\beta_1}\rfloor+\lfloor n^{\beta_2}\rfloor,\; j\lfloor n^{\beta_1}\rfloor\bigr),
\]
and for \(j=K\), set
\[
I_K':=\bigl[(K-1)\lfloor n^{\beta_1}\rfloor+\lfloor n^{\beta_2}\rfloor,\; \widehat n\bigr].
\]
Recall that for integers \(1\le n_1\le n_2\) and \(x\in\mathbb Z^d\),
\[
\widetilde{\ell}([n_1,n_2],x):=\widetilde{\ell}(n_2,x)-\widetilde{\ell}(n_1-1,x).
\]
We also define the maximum local time over an interval by
\[
\widetilde{\ell}^{*}([n_1,n_2])
:=\max_{x\in\mathbb Z^d}\widetilde{\ell}([n_1,n_2],x),
\qquad
\widetilde{\ell}^{*}([n_1,n_2))
:=\widetilde{\ell}^{*}([n_1,n_2-1]).
\]
To define the no-local-accumulation event, fix \(\delta>0\), and choose \(\eta>0\) sufficiently small so that
\begin{align}\label{eq:ensure}
    e^{2\gamma\eta}-1<\delta/2.
\end{align}
We then set
\begin{align}\label{def:A}
A:=\bigcap_{j=1}^K A_j,
\qquad
A_j:=\left\{
\widetilde{\ell}^{*}(I_j')
<
\beta\gamma^{-1}\log n-\eta
\right\}.
\end{align}
We first estimate the probability of a single block. Let
\[
m_n:=\lfloor n^{\beta_1}-n^{\beta_2}\rfloor .
\]
By \eqref{eq:upward_ell*}, for all sufficiently large \(n\), uniformly in \(1\le j\le K\),
\begin{align*}
\mathbb P(A_j^{\rm c})
&\le
\mathbb P\left\{
\widetilde{\ell}^{*}(m_n)
>
\frac{\beta}{\beta_1}\gamma^{-1}\log m_n-2\eta
\right\}\\
&=
(1+o(1))\,\gamma e^{2\gamma\eta}\,
(n^{\beta_1})^{-(\beta/\beta_1-1)}.
\end{align*}
In view of \eqref{eq:ensure}, this gives
\begin{align}\label{eq:single-Aj}
\mathbb P(A_j)
\ge
1-(1+\delta/2+o(1))\,\gamma\,
(n^{\beta_1})^{-(\beta/\beta_1-1)}.
\end{align}
Since the events \(A_1,\ldots,A_K\) depend on disjoint collections of increments and holding times, they are independent. Hence
\begin{align}\label{eq:pa}
\begin{aligned}
\mathbb P(A)
=\prod_{j=1}^K\mathbb P(A_j)&\ge
\left(
1-(1+\delta/2+o(1))\,\gamma\,(n^{\beta_1})^{-(\beta/\beta_1-1)}
\right)^K \\
&=
\exp\left\{-(1+\delta/2+o(1))\,\gamma\,n^{1-\beta}\right\}.
\end{aligned}
\end{align}
We denote the corresponding conditional measure by
\[
\mathbb Q=\mathbb Q(n):=\mathbb P(\,\cdot\,|\,A).
\]
It remains to prove the following two estimates under the conditional measure \(\mathbb Q\).
\begin{proposition}\label{prop:Mj_bound}
There exists a constant \(c=c(\beta_1,\beta_2)>0\) such that
\begin{align}\label{eq:o1}
\mathbb Q\bigl(\mathcal M_j\le c\log n \text{ for all }1\le j\le \mathcal N\bigr)=1-o(1).
\end{align}
\end{proposition}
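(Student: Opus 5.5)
We plan to prove the stronger statement that, under $\mathbb Q$, every site $x\in S[0,\widehat n]$ satisfies $\xi(\widehat n,x)\le c\log n$ with $\mathbb Q$-probability $1-o(1)$. This suffices because $\mathcal M_j\le \xi(\widehat n,Y_{\tau_j})$. The difficulty is that $\mathbb Q$ is a conditioned measure. The plan is to transfer a $\mathbb P$-bound to $\mathbb Q$ by paying at most $\mathbb P(A)^{-1}$ on a small number of blocks only, and never on all $K$ blocks. Two facts drive this. First, $A=\bigcap_j A_j$ with independent $A_j$. Second, each $\mathbb P(A_j)\ge 1-o(1)$ by \eqref{eq:single-Aj}, since $\beta>\beta_1$.

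\emph{Step 1 (union bound over starting times).} A site $x$ with $\xi(\widehat n,x)>c\log n$ has a first visit time $i\le\widehat n$. Let $j$ be the block containing $i$, so $i\in I_j$. We split the subsequent $\lceil c\log n\rceil$ visits according to how many distinct blocks they fall in. Say the visits from time $i$ onward meet $x$ in blocks $j=j_0<j_1<\dots<j_r$.

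\emph{Step 2 (few blocks: $r< M$).} Some single block then carries at least $c\log n/M$ visits to $x$. Choose $c$ large, namely $c/M>2\alpha$. Lemma~\ref{lem:local_time_0} and a union bound over the $\le \widehat n$ starting times give $\mathbb P(\text{such }x\text{ exists})\le \widehat n(1-\gamma)^{c\log n/M}=o(1)$. To pass to $\mathbb Q$, note that this event depends only on $O(1)$ blocks, or more precisely on one block together with one starting time. We condition on the increments in the blocks it involves, say the set $J$ of blocks with $|J|\le M+1$. The other $A_{j'}$ with $j'\notin J$ are independent of the event, so $\mathbb Q(E)\le \mathbb P(E)/\prod_{j\in J}\mathbb P(A_j)=(1+o(1))\mathbb P(E)$. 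Applied after union-bounding over the starting time, this gives the required $o(1)$.

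\emph{Step 3 (many blocks: $r\ge M$).} Here the walk must return to $x$ across at least $M$ block boundaries. Each such return that comes from a different block, not the adjacent continuation, has $\mathbb P$-probability at most $\lesssim (n^{\beta_2})^{-(d/2-1)}$ by \eqref{eq:late_hit}. This is where the shortened intervals $I_j'$ and the choice $\beta_2>\frac2d\beta_1$ enter. The conditioning event $A$ constrains only $I_j'$, and the first $n^{\beta_2}$ steps of each block are unconditioned. We decompose each return according to whether it lands in such an initial window or in $I'_{j}$. A return landing in an initial window is estimated under $\mathbb P$ directly. A return landing in $I'_j$ is first reached through an unconditioned initial window of length $n^{\beta_2}$, so the walk has spread at scale $n^{\beta_2/2}$ before the constrained part begins. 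Combined with \eqref{eq:uniform_in_range}, the hitting probability of a fixed $x$ from a block is $\lesssim n^{\beta_1}\cdot n^{-\beta_2 d/2}$, which is a negative power $n^{-\theta}$. Using the strong Markov property at each return time, and the bounded factor $\mathbb P(A_{j_\ell})^{-1}=1+o(1)$ for each of the $M$ visited blocks, we get a $\mathbb Q$-probability of order $n\cdot K^{M}\cdots$. More carefully, summing over choices of blocks gives $\lesssim n\,(n^{-\theta'})^{M}$ for some $\theta'>0$. Choosing $M=M(\beta_1,\beta_2)$ large makes this $o(1)$.

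The main obstacle is Step 3. The delicate part is obtaining a hitting estimate for returns from far blocks that is uniform under $\mathbb Q$, while keeping the summation over block indices $j_1<\dots<j_M$ under control. The $\sum_{j'}$ over the next block must be bounded by something like $\sum_{m\ge1}(m n^{\beta_1})^{-(d/2-1)}n^{\beta_1}$ times the initial-window factor. For $d=3$ this sum diverges logarithmically in $K$, so we may need to absorb one power using the gain $n^{-\theta}$ from the window argument. We would first try to prove a uniform one-step bound $\sup_{x,y}\mathbb Q\bigl(x\in S(\bigcup_{j'>j}I_{j'})\,\big|\,\mathcal F_{\text{end of }I_j}\bigr)\le n^{-\theta}$ and then iterate it $M$ times.
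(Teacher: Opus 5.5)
Your overall architecture is the paper's. You show that under $\mathbb Q$ no site is visited by more than a constant number $M$ of segments, and that no single segment accumulates more than $O(\log n)$ discrete local time at any site; multiplying the two gives the bound on $\mathcal M_j$. Step~2 is fine and is essentially the paper's bound \eqref{eq:upwards}: the event lives in one block, so you only pay $\mathbb P(A_j)^{-1}=1+o(1)$.

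\textbf{The gap is Step~3.} This is the real content of the argument (the analogue of Proposition~\ref{prop:Jx}), and you leave it as an unproved ``one-step bound''. None of the ingredients you list delivers it.
\begin{itemize}
\item The estimate \eqref{eq:late_hit} is a $\mathbb P$-bound for an event that depends on all later blocks. Transferring it to $\mathbb Q$ costs $\prod_{j'>j}\mathbb P(A_{j'})^{-1}$, which can be as large as $\exp\{c\,n^{1-\beta}\}$ when $\beta<1$. That is precisely the cost you said you would avoid.
\item The single-window bound $n^{\beta_1}n^{-(d/2)\beta_2}$ per target block does not decay with the distance between blocks. Summing it over the $\asymp n^{1-\beta_1}$ later blocks gives $n^{1-(d/2)\beta_2}$. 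This need not be small when only $\beta_2>\frac2d\beta_1$ is assumed, so your claimed bound $n(n^{-\theta'})^M$ has no support.
\item The one-step bound you propose is false as written, because it includes the adjacent block. If $I_j$ ends next to $x$, the walk hits $x$ during $I_{j+1}$ with probability bounded below.
\item The strong Markov property at return times is not available under $\mathbb Q$. A return time inside $I_{j_\ell}'$ splits the path on which $A_{j_\ell}$ depends, and the unvisited intermediate blocks are conditioned as well.
\end{itemize}

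\textbf{The missing idea} is to use the unconditioned initial windows of all intermediate blocks at once.
\begin{itemize}
\item Take $k\in I_i$ and $m\in I_j$ with $j\ge i+2$. The displacement $S_m-S_k$ contains the increments over the first $\lfloor n^{\beta_2}\rfloor$ steps of each block $i+1,\dots,j-1$.
\item Since $A_r$ only sees $I_r'$, under $\mathbb Q$ these are $(j-i-1)\lfloor n^{\beta_2}\rfloor$ free simple-random-walk steps, independent of everything else.
\item Conditioning on the rest and applying \eqref{eq:lclt} gives Lemma~\ref{lem:SlSm}: $\mathbb Q(S_k=S_m\mid\cdots)\lesssim((j-i-1)n^{\beta_2})^{-d/2}$.
\item Summing over $m\in I_j$ and $j\ge i+2$ yields $n^{\beta_1-(d/2)\beta_2}\sum_{g\ge1}g^{-d/2}\lesssim n^{-((d/2)\beta_2-\beta_1)}$. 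The series converges because $d\ge3$.
\end{itemize}
This bound holds conditionally on $\mathcal G_i=\sigma(S(I_{[1,i]}))$ alone, so it can be iterated along the block-boundary stopping times $\tau_r=\inf\{m\ge\tau_{r-1}+2:S_k\in S(I_m)\}$ (Corollary~\ref{cor:stopping}). Any $M+1$ distinct visited blocks force at least $\lfloor M/2\rfloor-1$ such skips. A union bound over $k\le\widehat n$ then finishes once $(\lfloor M/2\rfloor-1)((d/2)\beta_2-\beta_1)>1$.
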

\begin{proposition}\label{prop:N_bound}
There exists a constant \(b=b(\beta_1,\beta_2)<1-\beta\) such that
\begin{align}\label{eq:o2}
\mathbb Q\bigl(\mathcal N\le n^b\bigr)=1-o(1).
\end{align}
\end{proposition}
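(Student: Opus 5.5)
The plan is a first-moment bound under $\mathbb Q$ followed by Markov's inequality. We aim to show $\mathbb E_{\mathbb Q}[\mathcal N]\le n^{1-\beta-\theta}$ for some $\theta=\theta(\beta_1,\beta_2)>0$; taking $b\in(1-\beta-\theta,1-\beta)$ then gives \eqref{eq:o2}. The structural input is the product form of $\mathbb Q$. Since $A=\bigcap_j A_j$ with the $A_j$ depending on disjoint increments and holding times, under $\mathbb Q$ the block pieces (increments and holding times on $I_j$) remain independent, and the $j$-th one has law $\mathbb P(\cdot\mid A_j)$. By \eqref{eq:single-Aj}, $\mathbb P(A_j)=1-o(1)$, so each such law has density at most $1+o(1)$ with respect to $\mathbb P$. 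Consequently, any event depending on boundedly many blocks has $\mathbb Q$-probability at most $(1+o(1))$ times its $\mathbb P$-probability. The whole difficulty is to express $\{\mathcal N \text{ large}\}$ through such events.

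First we classify how a site $x$ can reach $\Lambda_n$. Split $[0,\widehat n]$ into pieces: the shortened intervals $I_j'$ and the removed initial stretches $J_j:=I_j\setminus I_j'$ of length $\lfloor n^{\beta_2}\rfloor$. On $A$, every $I_j'$ contributes less than $\Lambda_n$ at $x$. Hence a site counted by $\mathcal N$ falls into one of two cases. (i) It reaches $\Lambda_n$ within a single $J_j$. (ii) It receives positive local time from at least two distinct pieces, and the total exceeds $\Lambda_n$.

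Case (i) depends on one block. By the $\mathbb P$-analogue of \eqref{eq:upward_ell*} on a walk of length $n^{\beta_2}$, its expected count is at most $(1+o(1))K\cdot n^{\beta_2}n^{-\beta}\lesssim n^{1-\beta-(\beta_1-\beta_2)}$.

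For case (ii) we use a representation in the spirit of \eqref{eq:represent}. We sum over the first piece $p$ visiting $x$ and a later piece $q$ at which $x$ is revisited. In $p$, the site accumulates some amount $a$; the remaining amount $\Lambda_n-a$ is accumulated starting from the return in $q$. This is the continuous analogue of the $U_{i,j},V_{i,j}$ split in the proof of \eqref{eq:second_moment}. Using the strong Markov property and Lemma~\ref{lem:local_time_0}, the two local-time tails multiply to $e^{-\gamma a}e^{-\gamma(\Lambda_n-a)}\approx n^{-\beta}$, up to polylog factors from discretizing $a$. Summing over the $\lesssim n$ starting times gives the baseline $n^{1-\beta}$. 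What remains is to gain a polynomial factor from the return probability.

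\begin{itemize}
\item \emph{Adjacent pieces} ($q$ is the piece immediately after $p$, within the same or the next block). These involve at most two blocks, so we may pass to $\mathbb P$ at cost $1+o(1)$. We separate two sub-cases. If the displacement between the two visits is at least $n^{\eta'}$, we use \eqref{eq:two_point_continuous} together with \eqref{eq:hit_prob}, as in \eqref{eq:large_dis}: the return probability then supplies the factor $t_y\lesssim n^{-\eta'(d-2)}$. If the displacement is short, we use either the two-point gain \eqref{eq:two_point_continuous} with $\sup_y t_y<1$, or, when the return is separated by a full $J$-stretch, \eqref{eq:late_hit} giving $n^{-\beta_2(d/2-1)}$.
\item \emph{Far pieces} ($q$ at least two blocks after $p$). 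This is the step I expect to be the main obstacle. The return event depends on all intermediate blocks, whose $\mathbb Q$-laws are only individually close to $\mathbb P$; the product of $1+o(1)$ factors over up to $K$ blocks is not controlled. My plan is to condition on everything except block $q-1$. Given the rest, the position at the start of block $q$ equals the (fixed) position at the start of block $q-1$ plus the increment of block $q-1$, which has $\mathbb Q$-density at most $(1+o(1))\cdot Cn^{-\beta_1 d/2}$ by Lemma~\ref{lem:lclt}. Hence $\mathbb Q(\text{block } q \text{ hits } x\mid\cdot)\lesssim n^{\beta_1}\cdot n^{-\beta_1 d/2}$. This holds uniformly in the starting point, by counting the $\lesssim n^{\beta_1}$ possible entry points of block $q$'s range via a union bound over its starting site, using translation invariance.
\end{itemize}
For the far case, the per-block bound above is not summable over all $q\le K$ on its own. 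I would therefore refine it with the heat-kernel decay across the $q-p$ intermediate blocks. These are independent with near-$\mathbb P$ laws, so a second-moment/Gaussian comparison of their sum is available, as is a dyadic grouping of $q-p$ smoothed via the last block of each group. The goal is to bound $\sum_{q\ge p+2}\mathbb Q(\text{return in } q)\lesssim n^{-\beta_1(d/2-1)}$ up to logarithms. This would give $\mathbb E_{\mathbb Q}[\mathcal N]\lesssim n^{1-\beta}\,(\log n)^C\,(n^{-\beta_2(d/2-1)}+n^{-\beta_1(d/2-1)}+n^{-(\beta_1-\beta_2)})$. This is $n^{1-\beta-\theta}$ for a suitable $\theta>0$, and Markov's inequality completes the argument.
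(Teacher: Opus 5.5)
Your overall scheme (a first moment under $\mathbb Q$, then Markov's inequality) is the paper's, but case (ii) has a genuine gap in the far-return regime.

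\textbf{Far returns.} You multiply the return cost by the tail $e^{-\gamma(\Lambda_n-a)}$ of the local time collected after the return, citing the strong Markov property and Lemma~\ref{lem:local_time_0}. Neither is available under $\mathbb Q$.
\begin{itemize}
\item The conditional law of the future depends on the local times already accumulated in the current block, so there is no strong Markov property at the return time.
\item The local time at $x$ after a far return can be spread over all the remaining $\asymp n^{1-\beta_1}$ blocks. There your density comparison costs $\prod_j\mathbb P(A_j)^{-1}=\exp\{\Theta(n^{1-\beta})\}$, which blows up for $\beta<1$.
\end{itemize}
Smoothing through block $q-1$ does not rescue this. It yields a bound of the form $n^{-\beta_1 d/2}e^{-\gamma a}\,\mathbb E^{\mathbb Q}[\#Y_q]$, where $Y_q$ is the set of sites in the range of block $q$ whose local time from block $q$ onward exceeds $\Lambda_n-a$. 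Bounding $\mathbb E^{\mathbb Q}[\#Y_q]$ by $n^{\beta_1}e^{-\gamma(\Lambda_n-a)}$ is a multi-block thick-point estimate under $\mathbb Q$, which is exactly what you are trying to prove. The trivial bound $\#Y_q\le n^{\beta_1}$ loses the factor $n^{-\beta}$ altogether.

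What is missing is a way to confine the local-time bookkeeping to boundedly many blocks. The paper does this by truncation. Proposition~\ref{prop:Jx} shows, using positions only and no local times, that $\mathbb Q(\sup_x\#\mathcal J_x\le M)=1-o(1)$. One then bounds $\mathbb E^{\mathbb Q}[\mathcal N\,\mathbf 1\{\sup_x\#\mathcal J_x\le M\}]$ by decomposing over the exact visiting set $\mathcal J_{S_k}=J$ with $\#J\le M$. Every local-time event is then measurable with respect to the blocks in $J$, so \eqref{eq:QJ} and $\mathbb Q_J\le(1+o(1))\mathbb P$ apply.

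The local CLT for sums of conditioned blocks, which you only sketch, is also unnecessary. The removed stretches $I_r\setminus I_r'$ are independent of $A$, so they are honest simple random walk under $\mathbb Q$. Lemma~\ref{lem:SlSm} uses them to get the exact bound $(g n^{\beta_2})^{-d/2}$ across $g$ gap blocks. This is the reason those stretches are removed and the constraint $\beta_2>\frac2d\beta_1$ is imposed.

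\textbf{Adjacent pieces.} This case is not handled correctly either. Both visits are to the same site $x$, so ``the displacement between the two visits'' is zero. The two-point estimate \eqref{eq:two_point_continuous} gives a gain only when two \emph{distinct} sites are both thick, as in the second-moment computation; for a single thick site it gives nothing.

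If you meant the distance from $x$ to the walk's position at the piece boundary, then your far-displacement and ``full $J$-stretch'' sub-cases are fine. The near sub-case with immediately adjacent pieces still needs a real argument. When the first visit $k$ sits just before the boundary, there is no per-$k$ gain at all, so the gain must come from counting.

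The paper supplies this through the delayed-return estimate, Lemma~\ref{lem:delayed_return}. If $m=I_i^+-k>n^{\beta_1/2}$, then thickness together with a return after time $m$ costs $n^{-\beta-\beta_1/4+o(1)}$. The proof splits according to whether $\Lambda_n$ is reached before time $n^{\beta_1/2}/2$:
\begin{itemize}
\item if not, a long excursion appears in the inverse-local-time decomposition (Lemma~\ref{lem:general_early});
\item if so, a late return is required, controlled by \eqref{eq:late_hit}.
\end{itemize}
Only the $n^{\beta_1/2}$ values of $k$ per block nearest the boundary lack this gain, and their number alone gives the needed saving.
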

\subsubsection{Preliminaries for Propositions~\ref{prop:Mj_bound} and \ref{prop:N_bound}}
As explained in the main idea, a key step is to control how many segments can visit a given point. We begin by establishing a quantitative version of this statement. For \(1\le n_1\le n_2\le K\), write
\[
I_{[n_1,n_2]}:=I_{n_1}\cup\cdots\cup I_{n_2},
\qquad
I'_{[n_1,n_2]}:=I'_{n_1}\cup\cdots\cup I'_{n_2}.
\]
\begin{lemma}\label{lem:SlSm}
Uniformly for \(1\le i<j\le K\) with \(j-i\ge2\), \(k\in I_i\), and \(m\in I_j\), we have
\begin{align}
\mathbb Q\bigl(S_k=S_m\,\big|\,S(I_{[1,i]}),\, S(I_{[j,K]})\bigr)
&\lesssim \bigl((j-i-1)n^{\beta_2}\bigr)^{-d/2}.
\label{eq:SlSm}
\end{align}
Consequently, uniformly for \(1\le i\le K-2\) and \(k\in I_i\),
\begin{align}\label{eq:+2hit}
\mathbb Q\bigl(S_k\in S(I_{[i+2,K]})\,\big|\,S(I_{[1,i]})\bigr)
\lesssim n^{-\{(d/2)\beta_2-\beta_1\}}.
\end{align}
\end{lemma}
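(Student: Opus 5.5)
Under $\mathbb Q$ the increments in different blocks remain independent, because $A=\bigcap_j A_j$ with each $A_j$ measurable with respect to the increments and holding times of block $j$ alone. Hence under $\mathbb Q$ the blocks are independent, and within block $j$ the path has law $\mathbb P(\cdot\mid A_j)$. The plan is to condition on $S(I_{[1,i]})$ and $S(I_{[j,K]})$, fixing $S_k$ and the relative path $(S_{m'}-S_{(j-1)\lfloor n^{\beta_1}\rfloor})_{m'\in I_j}$. Then $S_k=S_m$ becomes the event that the displacement from $S_k$ to the start of block $j$ equals a given vector $z$. That displacement splits into $D_1:=S_{i\lfloor n^{\beta_1}\rfloor}-S_k$, which is conditionally fixed, plus the sum of the increments of the middle blocks $I_{i+1},\dots,I_{j-1}$. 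The middle blocks are independent of the conditioning, and each has law $\mathbb P(\cdot\mid A_l)$. So it suffices to bound, uniformly in $z\in\mathbb Z^d$,
\[
\mathbb Q\Bigl(\textstyle\sum_{l=i+1}^{j-1}\Delta_l=z\Bigr),
\qquad \Delta_l:=S_{l\lfloor n^{\beta_1}\rfloor}-S_{(l-1)\lfloor n^{\beta_1}\rfloor}.
\]

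The key point is that $A_l$ constrains only the shortened interval $I_l'$. The first $\lfloor n^{\beta_2}\rfloor$ steps of block $l$ (the initial segment), call their displacement $G_l$, are therefore independent of $A_l$ and of the rest of block $l$, even under $\mathbb Q$. Consequently
\[
\textstyle\sum_l\Delta_l=\sum_l G_l+R,
\]
where $R$ is independent of $(G_l)_{l=i+1}^{j-1}$ under $\mathbb Q$. The $G_l$ are i.i.d.\ with the unconditioned law of $S_{\lfloor n^{\beta_2}\rfloor}$, so $\sum_l G_l$ has the law of a simple random walk at time $(j-i-1)\lfloor n^{\beta_2}\rfloor$. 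Conditioning on $R$ and applying Lemma~\ref{lem:lclt} gives
\[
\sup_w\mathbb P\bigl(S_{(j-i-1)\lfloor n^{\beta_2}\rfloor}=w\bigr)\lesssim\bigl((j-i-1)n^{\beta_2}\bigr)^{-d/2},
\]
and this proves \eqref{eq:SlSm}. The hypothesis $j-i\ge2$ guarantees that at least one full middle block is available.

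The step to check carefully is this independence structure. The $\mathbb Q$-measure is a product over blocks, and within a block the indicator of $A_l$ is a function of the increments and holding times indexed by $I_l'$ only. The first $\lfloor n^{\beta_2}\rfloor$ increments of block $l$ (those in $I_l\setminus I_l'$) are thus untouched by the conditioning. This requires the sampled local time $\widetilde\ell(I_l',\cdot)$ to depend only on the holding times at jump indices in $I_l'$, which holds by definition. For the final block $I_K$ there is no issue, since only blocks with index strictly between $i$ and $j$ are used as middle blocks.

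For \eqref{eq:+2hit}, we take a union bound over $m\in I_{[i+2,K]}$, first conditioning on $S(I_{[1,i]})$ and then applying \eqref{eq:SlSm} after averaging over $S(I_{[j,K]})$. Each block $I_j$ has at most $n^{\beta_1}+1$ times, which gives
\[
\sum_{j\ge i+2} n^{\beta_1}\bigl((j-i-1)n^{\beta_2}\bigr)^{-d/2}\lesssim n^{\beta_1-(d/2)\beta_2}\sum_{r\ge1}r^{-d/2}.
\]
The series converges because $d\ge3$, and the resulting exponent $-\{(d/2)\beta_2-\beta_1\}$ is negative by the choice $\beta_2>2\beta_1/d$.
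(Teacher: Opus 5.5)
Your proposal is correct and is essentially the paper's proof. The paper conditions additionally on the shortened middle segments $S(I'_{[i+1,j-1]})$, which plays the role of your $R$. Then $S_k=S_m$ reduces to the event that the sum of the $j-i-1$ unconstrained initial pieces of length $\lfloor n^{\beta_2}\rfloor$ equals a fixed vector, which is bounded via Lemma~\ref{lem:lclt}, and the second bound follows from the same union bound over $m\in I_{[i+2,K]}$ with $\sum_r r^{-d/2}<\infty$. Your write-up is slightly more careful than the paper's on two points. You make explicit that conditioning on $S(I_{[j,K]})$ means conditioning on the relative path of those segments. You also index the initial pieces correctly: they belong to blocks $i+1,\dots,j-1$, whereas the paper's displayed sum runs over the initial pieces of blocks $i+2,\dots,j$.
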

\begin{proof}
Fix \(k\) and \(m\). Conditionally on \(S(I_{[1,i]})\), \(S(I_{[j,K]})\), and \(S(I'_{[i+1,j-1]})\), the vector
\[
\bigl(S_{i\lfloor n^{\beta_1}\rfloor}-S_k\bigr)
+\bigl(S_m-S_{(j-1)\lfloor n^{\beta_1}\rfloor}\bigr)
+\sum_{r=i+1}^{j-1}\bigl(S_{(r+1)\lfloor n^{\beta_1}\rfloor}-S_{r\lfloor n^{\beta_1}\rfloor+\lfloor n^{\beta_2}\rfloor}\bigr)
\]
is determined by the conditioning, and hence equals some fixed vector \(x\in\mathbb Z^d\). Therefore, the event \(\{S_k=S_m\}\) is equivalent to
\[
\left\{
\sum_{r=i+1}^{j-1}\bigl(S_{r\lfloor n^{\beta_1}\rfloor+\lfloor n^{\beta_2}\rfloor}-S_{r\lfloor n^{\beta_1}\rfloor}\bigr)
=-x
\right\}.
\]
Under the conditioning, the sum above has the same law as the position of a \(d\)-dimensional simple random walk after \((j-i-1)\lfloor n^{\beta_2}\rfloor\) steps. By \eqref{eq:lclt}, there exists a universal constant \(c>0\) such that
\[
\mathbb Q\bigl(
S_k=S_m\,\big|\,S(I_{[1,i]}),\,S(I_{[j,K]}),\,S(I'_{[i+1,j-1]})
\bigr)
\le
\frac{c}{\bigl((j-i-1)n^{\beta_2}\bigr)^{d/2}}.
\]
This proves \eqref{eq:SlSm}. The bound \eqref{eq:+2hit} then follows from the union bound.
\end{proof}
For \(1\le i\le K\), let
\(
\mathcal G_i:=\sigma\bigl(S(I_{[1,i]})\bigr).
\)
The next stopping-time version follows immediately from \eqref{eq:+2hit}, by applying that estimate on each event \(\{\tau=i\}\).
\begin{corollary}\label{cor:stopping}
Let \(\tau\in[1,K-2]\) be a stopping time with respect to the filtration \((\mathcal G_i)\). Then, for some universal constant \(c>0\), conditionally on \(\mathcal G_\tau\), for any \(k\in I_\tau\),
\begin{align}\label{eq:stopping}
\mathbb Q\bigl(S_k\in S(I_{[\tau+2,K]})\,\big|\,\mathcal G_\tau\bigr)
\le c\,n^{-\{(d/2)\beta_2-\beta_1\}}.
\end{align}
\end{corollary}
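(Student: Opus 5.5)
The plan is to reduce Corollary~\ref{cor:stopping} to the deterministic-time bound \eqref{eq:+2hit} by decomposing on the value of \(\tau\).

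\textbf{Step 1: decomposition.} Since \(\tau\) takes values in the finite set \(\{1,\dots,K-2\}\), it suffices to prove the bound on each event \(\{\tau=i\}\). On \(\{\tau=i\}\), the statement ``for \(k\in I_\tau\)'' reads ``for \(k\in I_i\)''. We therefore fix \(i\) and \(k\in I_i\) and show that, on \(\{\tau=i\}\),
\[
\mathbb Q\bigl(S_k\in S(I_{[i+2,K]})\,\big|\,\mathcal G_\tau\bigr)
=\mathbb Q\bigl(S_k\in S(I_{[i+2,K]})\,\big|\,\mathcal G_i\bigr)
\quad \mathbb Q\text{-a.s.}
\]

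\textbf{Step 2: the identification.} This is the standard fact that, for a stopping time \(\tau\), the conditional expectation given \(\mathcal G_\tau\) agrees on \(\{\tau=i\}\) with the conditional expectation given \(\mathcal G_i\). It holds because \(\{\tau=i\}\in\mathcal G_i\cap\mathcal G_\tau\), and because the traces of \(\mathcal G_\tau\) and \(\mathcal G_i\) on \(\{\tau=i\}\) coincide. The argument uses only the definition of conditional expectation. Since \(\mathbb Q\) is a probability measure, it applies under \(\mathbb Q\) just as under \(\mathbb P\). In particular, no Markov property under \(\mathbb Q\) is needed: the conditioning on \(A\) is already built into \eqref{eq:+2hit}.

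\textbf{Step 3: uniform constant.} By Step 2, on \(\{\tau=i\}\) the conditional probability is at most the right-hand side of \eqref{eq:+2hit}, namely \(c\,n^{-\{(d/2)\beta_2-\beta_1\}}\). The constant \(c\) is uniform in \(i\) and \(k\), since \eqref{eq:+2hit} is stated uniformly. It is also universal, because it comes from the heat-kernel constant in \eqref{eq:lclt} together with the union bound \(\sum_{r\ge1}\bigl(r n^{\beta_2}\bigr)^{-d/2}\,n^{\beta_1}\). Summing the bounds over the disjoint events \(\{\tau=i\}\), \(1\le i\le K-2\), gives \eqref{eq:stopping}.

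\textbf{Main obstacle.} The only delicate point is the measurability bookkeeping in Step 2, namely checking that \(\{\tau=i\}\cap F\in\mathcal G_i\) for every \(F\in\mathcal G_\tau\) and conversely. This holds by the definition of \(\mathcal G_\tau\). Beyond that, the proof is a direct reduction to \eqref{eq:+2hit}.
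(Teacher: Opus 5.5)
Your proposal is correct and follows the paper's argument: the paper also derives \eqref{eq:stopping} by applying \eqref{eq:+2hit} on each event $\{\tau=i\}$, $1\le i\le K-2$, using that conditional expectations given $\mathcal G_\tau$ and given $\mathcal G_i$ agree on $\{\tau=i\}$. The paper leaves two points implicit that you spell out: the localization is purely measure-theoretic, so it holds under $\mathbb Q$ without any Markov property, and the constant in \eqref{eq:+2hit} is uniform in $i$ and $k$.
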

We now estimate the maximal number of segments that can visit a single point. For \(x\in\mathbb Z^d\), define
\[
\mathcal J_x=\mathcal J_x(n):=\{1\le j\le K:x\in S(I_j)\}.
\]
\begin{proposition}\label{prop:Jx}
There exists a constant \(M=M(\beta_1,\beta_2)\in\mathbb N^+\) such that
\begin{align}\label{eq:Jx}
\mathbb Q\Bigl(\sup_{x\in\mathbb Z^d}\#\mathcal J_x\le M\Bigr)=1-o(1).
\end{align}
\end{proposition}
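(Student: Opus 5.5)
Our plan is to control, for each visited site, the number of non-adjacent segments that visit it, using Corollary~\ref{cor:stopping} iteratively via the strong Markov property with respect to $(\mathcal G_i)$. Set $\theta:=(d/2)\beta_2-\beta_1>0$, which is positive because $\beta_2>\frac{2}{d}\beta_1$. If $\#\mathcal J_x\ge M$ for some $x$, then among the indices in $\mathcal J_x$ we can extract at least $L:=\lceil M/2\rceil$ indices $j_1<j_2<\dots<j_L$ with $j_{r+1}-j_r\ge 2$ (take every other index). So it suffices to bound the $\mathbb Q$-probability that some point is visited by $L$ segments with pairwise gaps at least $2$.

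First we fix the first visit. We enumerate candidate points by the time $k\in[0,\widehat n]$ of a visit in the first segment $I_{j_1}$; since $x=S_k$, the number of choices of $(j_1,k)$ is at most $\widehat n+1\lesssim n$. Given $k\in I_{j_1}$, we define recursively the stopping times $\tau_1:=j_1$ and
\[
\tau_{r+1}:=\inf\{i\ge \tau_r+2:\ S_k\in S(I_i)\},
\]
which are stopping times for $(\mathcal G_i)$ once $k$ and $\tau_r$ are given (the event $\{\tau_{r+1}=i\}$ is $\mathcal G_i$-measurable). Applying \eqref{eq:stopping} at $\tau_r$ with the fixed time $k\in I_{j_1}\subset I_{[1,\tau_r]}$ (the proof of Lemma~\ref{lem:SlSm} works verbatim for any $k$ in an earlier segment, since only the steps in $I'_{[\tau_r+1,j-1]}$ are integrated out), we get
\[
\mathbb Q(\tau_{r+1}<\infty\mid\mathcal G_{\tau_r})\le c\,n^{-\theta}.
\]
Iterating gives $\mathbb Q(\tau_L<\infty)\le (c n^{-\theta})^{L-1}$. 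A union bound over $k$ then yields
\[
\mathbb Q\Bigl(\sup_x\#\mathcal J_x\ge M\Bigr)\lesssim n\,(c\,n^{-\theta})^{L-1}=o(1)
\]
as soon as $(L-1)\theta>1$, i.e.\ for $M:=2\lceil 1/\theta\rceil+4$.

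The main obstacle is justifying \eqref{eq:stopping} under $\mathbb Q$ rather than $\mathbb P$: conditioning on $A$ couples segments. The point to check is that $A=\bigcap_j A_j$ only restricts the increments and holding times inside the shortened intervals $I'_j$. The initial $\lfloor n^{\beta_2}\rfloor$ steps of each segment, which the proof of Lemma~\ref{lem:SlSm} integrates over, are therefore independent of $A$ and of everything else, and remain distributed as free SRW steps under $\mathbb Q$. This independence is why the stopping-time extension (applied on each $\{\tau=i\}$) is legitimate. A minor point is the case $\tau_r>K-2$, where there is nothing left to hit, so the bound holds trivially.
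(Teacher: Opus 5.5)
Your proposal is correct and follows essentially the same route as the paper: a union bound over the time $k\in[0,\widehat n]$, greedy stopping times $\tau_{r+1}=\inf\{i\ge\tau_r+2:\ S_k\in S(I_i)\}$ for $(\mathcal G_i)$, iteration of \eqref{eq:stopping} to get $\mathbb Q(\tau_L<\infty)\le (c\,n^{-((d/2)\beta_2-\beta_1)})^{L-1}$, and a choice of $M$ making $(L-1)((d/2)\beta_2-\beta_1)>1$. Your remark that \eqref{eq:stopping} has to be applied with $k$ in an earlier segment rather than in $I_\tau$ is something the paper also does, without comment. One slip: your parenthetical has it backwards, since the steps integrated out are the initial $\lfloor n^{\beta_2}\rfloor$ steps of the intermediate segments, not the steps in $I'_{[\tau_r+1,j-1]}$ (which are conditioned on); your final paragraph states this correctly.
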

\begin{proof}
We first observe that
\begin{align}\label{eq:observe1}
\Bigl\{\sup_{x\in\mathbb Z^d}\#\mathcal J_x>M\Bigr\}
=
\bigcup_{k=0}^{\widehat n}
\bigl\{S_k\notin S[1,k-1],\ \#\mathcal J_{S_k}>M\bigr\}.
\end{align}
Fix \(1\le i\le K\) and \(k\in I_i\), and define stopping times
\[
\tau_0(k):=i,
\qquad
\tau_r(k):=\inf\{m\in[\tau_{r-1}(k)+2,K]:S_k\in S(I_m)\},
\qquad r\ge1.
\]
Then
\begin{align}\label{eq:observe2}
\bigl\{S_k\notin S[1,k-1],\ \#\mathcal J_{S_k}>M\bigr\}
\subset
\bigl\{\tau_{\lfloor M/2\rfloor-1}(k)<\infty\bigr\}.
\end{align}
We now apply \eqref{eq:stopping} iteratively to the sequence of stopping times
$
\tau_0(k),\dots,\tau_{\lfloor M/2\rfloor-2}(k)$.
This yields the following bound, for some universal constant \(c>0\):
\[
\mathbb Q\bigl(\tau_{\lfloor M/2\rfloor-1}(k)<\infty\bigr)
\le
c\,n^{-(\lfloor M/2\rfloor-1)\{(d/2)\beta_2-\beta_1\}}.
\]
Since \((d/2)\beta_2-\beta_1>0\), we may choose \(M=M(\beta_1,\beta_2)\) sufficiently large so that
\[
(\lfloor M/2\rfloor-1)\{(d/2)\beta_2-\beta_1\}>1.
\]
Combining \eqref{eq:observe1} and \eqref{eq:observe2} and using the union bound, we obtain
\[
\mathbb Q\Bigl(\sup_{x\in\mathbb Z^d}\#\mathcal J_x>M\Bigr)
\le
\widehat n\cdot c\,n^{-(\lfloor M/2\rfloor-1)\{(d/2)\beta_2-\beta_1\}}
=o(1).\qedhere
\]
\end{proof}
We record the following simple observation, which will be used in the proof of Proposition~\ref{prop:N_bound}.
For each \(1\le j\le K\), set
\[
a_j:=(j-1)\lfloor n^{\beta_1}\rfloor,\qquad
b_j:=j\lfloor n^{\beta_1}\rfloor\wedge \widehat n.
\]
Let \(\widehat Y^{(j)}\) be the \(j\)-th continuous-time block translated to start from time \(0\) and position \(0\):
\[
\widehat Y^{(j)}
:=\bigl(Y_{N^{-1}(a_j)+t}-Y_{N^{-1}(a_j)}:
0\le t\le N^{-1}(b_j)-N^{-1}(a_j)\bigr).
\]
For each subset \(J\subset\{1,\dots,K\}\), define
\begin{align}\label{def:QJ}
\mathbb Q_J:=
\mathbb P\Bigl(\,\cdot\,\Big| \bigcap_{j\in J}A_j\Bigr).
\end{align}
Thus, unlike \(\mathbb Q\), which conditions on all the events \(A_j\), the measure \(\mathbb Q_J\) conditions only on those \(A_j\) with \(j\in J\).
\begin{lemma}
For any event
\(
E\in \sigma\bigl(\widehat Y^{(j)}:j\in J\bigr),
\)
we have
\begin{align}\label{eq:QJ}
\mathbb Q(E)=\mathbb Q_J(E).
\end{align}
\end{lemma}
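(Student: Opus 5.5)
The plan is to exploit independence of the blocks. The continuous-time process is built from i.i.d.\ increments of $S$ and i.i.d.\ $\mathrm{Exp}(1)$ holding times. The block processes $\widehat Y^{(1)},\dots,\widehat Y^{(K)}$ are measurable with respect to disjoint collections of these variables: increments $S_{m+1}-S_m$ for $m\in[a_j,b_j)$ and holding times $N^{-1}(m+1)-N^{-1}(m)$ for $m\in[a_j,b_j)$. Hence the $\widehat Y^{(j)}$ are mutually independent under $\mathbb P$. This is the same fact already used to write $\mathbb P(A)=\prod_j\mathbb P(A_j)$.

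The next step is to check that each $A_j$ is measurable with respect to $\widehat Y^{(j)}$ alone. The local time $\widetilde\ell([n_1,n_2],x)$ over $I_j'\subset[a_j,b_j]$ depends only on the visits of the walk during the jump indices in $I_j'$ and the corresponding holding times. The relevant holding time for jump index $m$ is $N^{-1}(m+1)-N^{-1}(m)$; for the final index $m=b_j-1$ (or $\widehat n$ when $j=K$) one should check this still lies inside block $j$, adjusting the convention at the right endpoint if necessary. The maximum over $x$ is invariant under the spatial translation by $Y_{N^{-1}(a_j)}$, so $A_j$ is a function of the translated path $\widehat Y^{(j)}$ restricted to times after $N^{-1}(a_j+\lfloor n^{\beta_2}\rfloor)-N^{-1}(a_j)$. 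Write $A_j=\{\widehat Y^{(j)}\in G_j\}$.

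Now let $E\in\sigma(\widehat Y^{(j)}:j\in J)$ and set $A_J:=\bigcap_{j\in J}A_j$ and $A_{J^c}:=\bigcap_{j\notin J}A_j$. Then $E\cap A_J\in\sigma(\widehat Y^{(j)}:j\in J)$, while $A_{J^c}\in\sigma(\widehat Y^{(j)}:j\notin J)$. By independence of these two $\sigma$-fields,
\[
\mathbb Q(E)=\frac{\mathbb P(E\cap A_J\cap A_{J^c})}{\mathbb P(A_J)\,\mathbb P(A_{J^c})}=\frac{\mathbb P(E\cap A_J)\,\mathbb P(A_{J^c})}{\mathbb P(A_J)\,\mathbb P(A_{J^c})}=\mathbb Q_J(E),
\]
using $\mathbb P(A)=\mathbb P(A_J)\,\mathbb P(A_{J^c})>0$ by \eqref{eq:pa}.

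The only point requiring care is the bookkeeping at block boundaries: making sure that $A_j$ and $\widehat Y^{(j)}$ use exactly the same increments and holding times, and that the final holding time in $I_K$ is handled consistently. I expect this to be routine rather than a real obstacle.
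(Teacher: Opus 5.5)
Your proof is correct and is exactly the block-independence argument the paper has in mind. The paper states this lemma without proof, relying on the same fact (the blocks use disjoint collections of increments and holding times) that gave $\mathbb P(A)=\prod_{j}\mathbb P(A_j)$. The boundary point you flag only matters for $j=K$: since $I_K'$ is closed at $\widehat n$, the event $A_K$ also uses the holding time $N^{-1}(\widehat n+1)-N^{-1}(\widehat n)$, which lies outside $\widehat Y^{(K)}$. That holding time is independent of every block, so you can adjoin it to the $\sigma$-field on the side containing index $K$, and your factorization then goes through verbatim with no change of convention.
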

\subsubsection{Proof of Propositions \ref{prop:Mj_bound}}
\begin{proof}
Proposition~\ref{prop:Jx} provides a bound on the number of segments visiting any given point. We now further bound the maximal discrete local time within each segment. Together, these yield the desired bound on $\mathcal{M}_j$.
Denote by
\[
\xi^*(I_j):=\sup_{x\in\z^d}\xi(I_j,x)=\sup_{x\in\z^d}\sum_{r\in I_j}\mathbf 1_{\{S_r=x\}}
\]
the maximal discrete local time accumulated during the segment \(I_j\).
    From \eqref{eq:discrete_UD}, for each $j=1,\ldots,K$, we have
\[\p\big(\xi^*(I_j)>(1+\beta_1)\cdot\gamma^{-1}\log n\big)\lesssim (n^{\beta_1})^{-(\frac{1+\beta_1}{\beta_1}-1)}=n^{-1}.\]
	Recalling the definitions of $A$ and $A_j$ in \eqref{def:A}, it follows that
	\begin{align*}
		\q\big(\xi^*(I_j)>(1+\beta_1)\cdot\gamma^{-1}\log n\big)
        \le\p\big(\xi^*(I_j)>(1+\beta_1)\cdot\gamma^{-1}\log n\big)/\p(A_j)\lesssim n^{-1}.
	\end{align*}
	Hence, recalling that $K=\left\lfloor \frac{\widehat n}{\lfloor n^{\beta_1}\rfloor}\right\rfloor+1\sim n^{1-\beta_1}$, we obtain
	\begin{align}\label{eq:upwards}
		\q\big(\xi^*(I_j)\le (1+\beta_1)\cdot\gamma^{-1}\log n\ \forall\,j=1,\ldots,K\big)\ge 1-CKn^{-1}=1-o(1).
	\end{align}
	Now observe that
	\begin{align*}
		&\big\{\sup_{x\in\z^d}\#\mathcal{J}_x\le M\big\}\cap \big\{\xi^*(I_j)\le (1+\beta_1)\cdot\gamma^{-1}\log n\ \forall\,j=1,\ldots,K\big\}\\
        &\subset\big\{\mathcal{M}_j\le M(1+\beta_1)\cdot\gamma^{-1}\log n\ \forall\,1\le j\le \mathcal{N}\big\}.
	\end{align*}
	Combining this inclusion with the bounds from \eqref{eq:Jx} and \eqref{eq:upwards}, the desired result follows with $c=M(1+\beta_1)\cdot \gamma^{-1}$.    
\end{proof}
\subsubsection{Proof of Proposition~\ref{prop:N_bound}}
\begin{proof}
Fix \(M\in\mathbb N^+\) so that \eqref{eq:Jx} holds.
It suffices to prove that
\[
\mathbb E^{\mathbb Q}\Bigl(
\mathcal N\cdot \mathbf 1\Bigl\{\sup_{x\in\mathbb Z^d}\#\mathcal J_x\le M\Bigr\}
\Bigr)
=o(n^b)
\qquad\text{for some } b<1-\beta.
\]
\smallskip
\noindent\textbf{Step 1: Reduction to a blockwise estimate.}
We begin with the representation
\[
\mathcal N
=
\sum_{k=0}^{\widehat n}
\mathbf 1\bigl\{
S_k\notin S[0,k-1],\ \widetilde{\ell}(\widehat n,S_k)\ge \Lambda_n
\bigr\}.
\]
It follows that
\begin{align*}
\mathbb E^{\mathbb Q}\Bigl(
\mathcal N\cdot \mathbf 1\Bigl\{\sup_{x\in\mathbb Z^d}\#\mathcal J_x\le M\Bigr\}
\Bigr)\le\,&
\sum_{k=0}^{\widehat n}
\mathbb Q\Bigl(
S_k\notin S[0,k-1],\ \widetilde{\ell}(\widehat n,S_k)\ge \Lambda_n,\ \#\mathcal J_{S_k}\le M
\Bigr) \\
=\,&
\sum_{i=1}^{K}\sum_{k\in I_i}
\mathbb Q\Bigl(
S_k\notin S[0,k-1],\ \widetilde{\ell}(\widehat n,S_k)\ge \Lambda_n,\ \#\mathcal J_{S_k}\le M
\Bigr).
\end{align*}
Since \(K\sim n^{1-\beta_1}\), it remains to show that there exists \(u>0\) such that, uniformly for
\(i\in[1,K]\),
\begin{align}\label{eq:N_sum}
\sum_{k\in I_i}
\mathbb Q\Bigl(
S_k\notin S[0,k-1],\ \widetilde{\ell}(\widehat n,S_k)\ge \Lambda_n,\ \#\mathcal J_{S_k}\le M
\Bigr)
=
o\bigl(n^{-(\beta-\beta_1)-u}\bigr).
\end{align}
\smallskip
\noindent\textbf{Step 2: Decomposition according to \(\mathcal J_{S_k}\).}
To estimate the left-hand side of \eqref{eq:N_sum}, we decompose according to the possible values of
\(\mathcal J_{S_k}\):
\begin{align*}
&\mathbb Q\Bigl(
S_k\notin S[0,k-1],\ \widetilde{\ell}(\widehat n,S_k)\ge \Lambda_n,\ \#\mathcal J_{S_k}\le M
\Bigr) \\
&\qquad =
\sum_{\substack{J\subset\{i,\dots,K\}\\ \#J\le M}}
\mathbb Q\Bigl(
S_k\notin S[0,k-1],\ \widetilde{\ell}(\widehat n,S_k)\ge \Lambda_n,\ \mathcal J_{S_k}=J
\Bigr).
\end{align*}
We split the index sets \(J\) into the following three classes:
\begin{enumerate}[label=(\alph*),ref=\alph*]
    \item\label{item:case1_new} \(J=\{i\}\);
    \item\label{item:case2_new} \(J=[i,j]\) for some \(j\in[i+1,i+M-1]\);
    \item\label{item:case3_new} \(J\) is not an interval.
\end{enumerate}
We shall prove later that, uniformly for \(i\in[1,K]\), the contributions of these three classes satisfy
\begin{align}
&\sum_{k\in I_i}
\mathbb Q\Bigl(
S_k\notin S[0,k-1],\ \widetilde{\ell}(\widehat n,S_k)\ge \Lambda_n,\ \mathcal J_{S_k}=\{i\}
\Bigr)=
O\bigl(n^{-(\beta-\beta_1)-(\beta_1-\beta_2)}\bigr);\label{eq:case_a_bound}
\\[4pt]
&\sum_{j=i+1}^{(i+M-1)\wedge K}\sum_{k\in I_i}
\mathbb Q\Bigl(
S_k\notin S[0,k-1],\ \widetilde{\ell}(\widehat n,S_k)\ge \Lambda_n,\ \mathcal J_{S_k}=[i,j]
\Bigr)\label{eq:case_b_bound}\\
&\le
n^{-(\beta-\beta_1)-\beta_1/4+o(1)};
\notag\\[4pt]
&\sum_{k\in I_i}
\mathbb Q\Bigl(
S_k\notin S[0,k-1],\ \widetilde{\ell}(\widehat n,S_k)\ge \Lambda_n,\ 
\mathcal J_{S_k}\text{ is not an interval},\ \# \mathcal J_{S_k}\le M
\Bigr)\label{eq:case_c_bound}\\
&\le
n^{-(\beta-\beta_1)-((d/2)\beta_2-\beta_1)+o(1)}.
\notag
\end{align}
Since \(\beta_2<\beta_1\) and \(\beta_1<\frac d2\beta_2\), the bounds in \eqref{eq:case_a_bound}--\eqref{eq:case_c_bound} imply \eqref{eq:N_sum}.
\end{proof}
\subsubsection{Proofs of \eqref{eq:case_a_bound}--\eqref{eq:case_c_bound}}
We now prove the three case estimates \eqref{eq:case_a_bound}--\eqref{eq:case_c_bound}. Their mechanisms are as follows:
\begin{itemize}
    \item For \eqref{eq:case_a_bound}, all the local time is accumulated within a single segment. This is directly suppressed under the conditioning on \(A\), up to the short initial part removed from the segment.
    \item For \eqref{eq:case_b_bound}, the index set \(J\) is a single interval of segments. Thus, if the local time at \(S_k\) is not already created inside \(I_i\), then it must be built up by revisits coming from later segments; away from the right endpoint of \(I_i\), this forces a delayed return, which yields an additional decay.
    \item For \eqref{eq:case_c_bound}, the set \(J\) is disconnected. Each gap between successive blocks of \(J\) contributes an additional decay factor, and this gain outweighs the combinatorial complexity of summing over all such \(J\).
\end{itemize}
For later use, for each \(i\in[1,K]\), we write
\[
I_i^-:=(i-1)\lfloor n^{\beta_1}\rfloor,
\qquad
I_i^+:=i\lfloor n^{\beta_1}\rfloor\wedge \widehat n
\]
for the left and right endpoints of the interval \(I_i\).
\begin{proof}[Proof of \eqref{eq:case_a_bound}]
Fix \(i\in[1,K]\). On
\(
\Bigl\{
S_k\notin S[0,k-1],\ \widetilde{\ell}(\widehat n,S_k)\ge \Lambda_n,\ \mathcal J_{S_k}=\{i\}
\Bigr\},
\)
the site \(S_k\) is visited only during \(I_i\), and hence
\[
\widetilde{\ell}(\widehat n,S_k)=\widetilde{\ell}([k,I_i^+],S_k),
\qquad
I_i^+:=i\lfloor n^{\beta_1}\rfloor\wedge \widehat n.
\]
If \(k\in I_i'\), then \(A_i\) implies
\[
\widetilde{\ell}([k,I_i^+],S_k)\le \widetilde{\ell}^*(I_i')<\Lambda_n,
\]
so the corresponding contribution is zero. Therefore
\[
\sum_{k\in I_i}
\mathbb Q\Bigl(
S_k\notin S[0,k-1],\ \widetilde{\ell}(\widehat n,S_k)\ge \Lambda_n,\ \mathcal J_{S_k}=\{i\}
\Bigr)
\le
\sum_{k\in I_i\setminus I_i'}
\mathbb Q\Bigl(
\widetilde{\ell}([k,I_i^+],S_k)\ge \Lambda_n
\Bigr).
\]
By the definition of \(\mathbb Q\),
\[
\mathbb Q\Bigl(
\widetilde{\ell}([k,I_i^+],S_k)\ge \Lambda_n
\Bigr)
\le
\frac{\mathbb P\bigl(\widetilde{\ell}(\infty,0)\ge \Lambda_n\bigr)}{\mathbb P(A_i)}
\lesssim n^{-\beta},
\]
uniformly in \(k\in I_i\setminus I_i'\). Summing over \(k\in I_i\setminus I_i'\), and using \(\#(I_i\setminus I_i')\lesssim n^{\beta_2}\), we obtain \eqref{eq:case_a_bound}.
\end{proof}
\begin{proof}[Proof of \eqref{eq:case_b_bound}]
We first record the delayed-return estimate needed in the proof.
\begin{lemma}\label{lem:delayed_return}
Uniformly for \(m\in[1,\lfloor n^{\beta_1}\rfloor]\),
\[
\mathbb P\Big(
\widetilde{\ell}(\infty,0)\ge \Lambda_n,\ 
0\in S[m,\infty)
\Big)
\le
\begin{cases}
O(n^{-\beta}), & 1\le m\le \lfloor n^{\beta_1/2}\rfloor,\\[3pt]
n^{-(\beta+\beta_1/4)+o(1)}, & \lfloor n^{\beta_1/2}\rfloor < m\le \lfloor n^{\beta_1}\rfloor.
\end{cases}
\]
\end{lemma}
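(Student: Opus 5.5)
For $1\le m\le \lfloor n^{\beta_1/2}\rfloor$ the claim is immediate. We simply drop the event $\{0\in S[m,\infty)\]\}$ and use \eqref{eq:local_time_0'}. Since $\Lambda_n=\beta\gamma^{-1}\log n-\eta$, this gives $\mathbb P(\widetilde\ell(\infty,0)\ge\Lambda_n)=e^{\gamma\eta}n^{-\beta}=O(n^{-\beta})$. Here $\widetilde\ell(\infty,0)=\ell(\infty,0)$ has the $\mathrm{Exp}(\gamma)$ law of Lemma~\ref{lem:local_time_0}.

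For $m>\lfloor n^{\beta_1/2}\rfloor$ the plan mirrors the proof of Lemma~\ref{lem:accumulate_early}. Let $T_0=0<T_1<T_2<\cdots$ be the successive visits to $0$, and let $\sigma_1,\sigma_2,\dots$ be the holding times at these visits. These are i.i.d.\ $\mathrm{Exp}(1)$ and independent of the skeleton $S$. On the event in question, let $R$ be the total number of visits, so $\widetilde\ell(\infty,0)=\sum_{r\le R}\sigma_r\ge\Lambda_n$, and some visit occurs at a time $\ge m$. This means $T_{R-1}\ge m$, so some gap satisfies $T_r-T_{r-1}\ge m/(R-1)$.

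We first truncate $R$. The variable $R=\xi(\infty,0)$ is geometric with parameter $\gamma$, so $\mathbb P(R>C\log n)\le n^{-2}$ once $C$ is large. On $\{R\le C\log n\}$, some gap $r\le C\log n$ is at least $L:=m/(C\log n)\ge n^{\beta_1/2-o(1)}$.

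We then decompose over $r$, as in \eqref{eq:Aj_new}. For fixed $r$ and fixed total $R=k$, the strong Markov property at $T_{r-1}$ and $T_r$ factorizes the skeleton probability. This gives
\[
\mathbb P(R=k,\ T_r-T_{r-1}\ge L)\le \mathbb P(R\ge r)\,\mathbb P(L\le T_1<\infty)\,(1-\gamma)^{k-r}\lesssim (1-\gamma)^{k-1}L^{-(d/2-1)},
\]
using \eqref{eq:late_hit}. The holding times are independent of the skeleton, so each term is multiplied by $\mathbb P(\Gamma_k\ge\Lambda_n)$, where $\Gamma_k$ is a Gamma$(k,1)$ variable. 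Summing over $k$ reproduces, up to a constant, the law of $\ell(\infty,0)$:
\[
\sum_k (1-\gamma)^{k-1}\mathbb P(\Gamma_k\ge\Lambda_n)=\gamma^{-1}\mathbb P(\ell(\infty,0)\ge\Lambda_n)\lesssim n^{-\beta}.
\]
Summing over $r\le C\log n$ then gives the total bound
\[
(\log n)\,n^{-\beta}\,L^{-(d/2-1)}+n^{-2}\le n^{-\beta-(d/2-1)\beta_1/2+o(1)}.
\]
Since $d\ge3$, we have $(d/2-1)\beta_1/2\ge\beta_1/4$. The $n^{-2}$ term is harmless because $\beta+\beta_1/4<2$.

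The main point requiring care is this summation over $k$ and $r$. The naive bound $\mathbb P(R\ge r)\mathbb P(R\ge k-r)$ must not lose more than a constant relative to the one-point tail. It works because the product of geometric tails recombines into the single factor $(1-\gamma)^{k-1}$, and the Gamma weighting is then independent of $r$. The factor of $\log n$ from the sum over $r$ is absorbed into the $n^{o(1)}$.
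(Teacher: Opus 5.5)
Your proof is correct, but it takes a different route from the paper's.

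\textbf{The paper's route.} The paper splits the event at the intermediate time $\lfloor n^{\beta_1/2}/2\rfloor$. If the level $\Lambda_n$ is reached only after that time, it invokes the separate Lemma~\ref{lem:general_early}. That lemma cuts $\Lambda_n$ into $\lfloor 1/\varepsilon\rfloor$ pieces of inverse local time, forces one piece to take at least $\varepsilon\lfloor n^{\beta_1/2}/2\rfloor$ steps, and then lets $\varepsilon\downarrow 0$. If the level is reached before that time, the walk must afterwards make a late return to $0$, which is handled by the Markov property and \eqref{eq:late_hit}.

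\textbf{Your route.} You make a single decomposition in terms of skeleton quantities: the total visit count $R=k$ and the index $r$ of a long excursion. You then use the independence of the holding times from the skeleton, so that the geometric weights recombine with the Gamma tails into exactly $\gamma^{-1}\mathbb P(\ell(\infty,0)\ge\Lambda_n)$. In effect this is the argument of Lemma~\ref{lem:accumulate_early} transplanted to continuous local time.

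\textbf{Comparison.} Your version is shorter and needs neither the auxiliary lemma nor the $\varepsilon$-limit. It also gives a slightly sharper bound: $n^{-\beta-(d/2-1)\beta_1/2}$ up to a power of $\log n$, rather than up to $n^{o(1)}$. The paper's version keeps the two mechanisms (late accumulation versus late return) visibly separate and never conditions on the number of visits.

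\textbf{A harmless slip.} The exact value is
\[
\mathbb P(R=k,\ T_r-T_{r-1}\ge L)=\gamma(1-\gamma)^{k-2}\,\mathbb P(L\le T_1<\infty).
\]
This differs from your intermediate expression $(1-\gamma)^{r-1}\,\mathbb P(L\le T_1<\infty)\,(1-\gamma)^{k-r}$ by the factor $\gamma/(1-\gamma)$, which need not be at most $1$. So your first inequality there should read $\lesssim$ rather than $\le$. After summing over $k$ this only changes the constant in front of $\mathbb P(\ell(\infty,0)\ge\Lambda_n)$, so the final bound is unaffected.
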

Fix \(i\in[1,K]\), \(j\in[i+1,(i+M-1)\wedge K]\), and set \(J=[i,j]\). 
Recalling the definition of \(\mathbb Q_J\) in \eqref{def:QJ}, 
\begin{align*}
&\mathbb Q\Bigl(
S_k\notin S[0,k-1],\ \widetilde{\ell}(\widehat n,S_k)\ge \Lambda_n,\ \mathcal J_{S_k}=J
\Bigr)\\
\le\,&
\mathbb Q\Bigl(
\widetilde{\ell}([k,I_j^+],S_k)\ge \Lambda_n,\ S_k\in S(I_{[i+1,j]})
\Bigr) \\
\overset{\eqref{eq:QJ}}{=}&\mathbb Q_J\Bigl(
\widetilde{\ell}([k,I_j^+],S_k)\ge \Lambda_n,\ S_k\in S(I_{[i+1,j]})
\Bigr)\\
\le\,&
\frac{
\mathbb P\Bigl(
\widetilde{\ell}([k,I_j^+],S_k)\ge \Lambda_n,\ S_k\in S(I_{[i+1,j]})
\Bigr)
}{
\prod_{\ell\in J}\mathbb P(A_\ell)
}.
\end{align*}
Since \(\#J\le M\) and \(\mathbb P(A_\ell)=1-o(1)\) uniformly in \(\ell\), the denominator is
\(1-o(1)\). Therefore, applying the Markov property and Lemma~\ref{lem:delayed_return}, we obtain
\begin{align*}
&\mathbb Q\Bigl(
S_k\notin S[0,k-1],\ \widetilde{\ell}(\widehat n,S_k)\ge \Lambda_n,\ \mathcal J_{S_k}=J
\Bigr)\\
&\qquad \le
(1+o(1))
\mathbb P\Bigl(
\widetilde{\ell}(\infty,0)\ge \Lambda_n,\ 
0\in S[\lfloor n^{\beta_1}\rfloor-k+I_i^-,\infty)
\Bigr)\\
&\qquad \le
\lfloor n^{\beta_1/2}\rfloor\cdot O(n^{-\beta})
+
\lfloor n^{\beta_1}\rfloor\cdot n^{-(\beta+\beta_1/4)+o(1)} =
n^{-(\beta-\beta_1)-\beta_1/4+o(1)}.
\end{align*}
Since there are at most $M$ admissible values of \(j\in[i+1,(i+M-1)\wedge K]\), summing over such
\(j\) does not change the order of magnitude. This proves \eqref{eq:case_b_bound}.
\end{proof}
To prove Lemma~\ref{lem:delayed_return}, we need the following auxiliary estimate.
\begin{lemma}\label{lem:general_early}
As \(n\to\infty\),
\begin{align}\label{eq:general_early}
\mathbb P\Big(
\widetilde{\ell}(\infty,0)\ge \Lambda_n,\ 
\widetilde{\ell}\big(\lfloor n^{\beta_1/2}/2\rfloor,0\big)<\Lambda_n
\Big)
\le n^{-(\beta+\beta_1/4)+o(1)}.
\end{align}
\end{lemma}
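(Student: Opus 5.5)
We adapt the proof of Lemma~\ref{lem:accumulate_early} to the jump-time local time $\widetilde\ell$. Set $L:=\lfloor n^{\beta_1/2}/2\rfloor$. The local time at the origin is built from the successive visits to $0$: let $T_0:=0$ and let $T_1<T_2<\cdots$ be the successive return times of $S$ to $0$. The holding times $\sigma_1,\sigma_2,\dots$ at these visits are i.i.d.\ $\mathrm{Exp}(1)$ and independent of the skeleton $S$. Thus $\widetilde\ell(\infty,0)=\sum_{r=1}^{\xi(\infty,0)}\sigma_r$.

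On the event in \eqref{eq:general_early} there is a visit, say the $r$-th one, with $T_{r-1}>L$. Let $R$ be the first index at which the partial sums of the $\sigma$'s reach $\Lambda_n$. Then $R$ visits occur and $T_{R-1}>L$. Since $T_{R-1}$ is a sum of $R-1$ gaps, at least one gap satisfies $T_j-T_{j-1}\ge L/(R-1)$ for some $1\le j\le R-1$.

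We split according to the size of $R$. Choose $R_0:=C\log n$ with $C$ large. We use two facts:
\begin{itemize}
\item $\p(\xi(\infty,0)\ge R_0)=(1-\gamma)^{R_0}$, which is smaller than any polynomial once $C$ is large.
\item For $R\le R_0$, the offending gap has length at least $L/R_0\ge n^{\beta_1/2-o(1)}$.
\end{itemize}
Hence it suffices to bound
\[
\sum_{r\le R_0}\sum_{j=1}^{r-1}\p\Bigl(\xi(\infty,0)\ge r,\ T_j-T_{j-1}\ge L/R_0,\ \sum_{q=1}^{r}\sigma_q\ge\Lambda_n\Bigr).
\]

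Because the $\sigma$'s are independent of $S$, each term factorizes. The skeleton factor is handled by the strong Markov property exactly as in Lemma~\ref{lem:accumulate_early}:
\[
\p\bigl(\xi(\infty,0)\ge r,\ T_j-T_{j-1}\ge L/R_0\bigr)\le(1-\gamma)^{r-1}\cdot\frac{\p\bigl(0\in S[\lfloor L/R_0\rfloor,\infty)\bigr)}{1-\gamma}\lesssim(1-\gamma)^{r}\,n^{-(\beta_1/2)(d/2-1)+o(1)},
\]
where the second factor uses \eqref{eq:late_hit}. The holding-time factor is a Gamma tail, $\p(\mathrm{Gamma}(r,1)\ge\Lambda_n)$.

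Summing over $j$ contributes a factor $r\le R_0=n^{o(1)}$. We are then left with
\[
\sum_r (1-\gamma)^r\,\p\bigl(\mathrm{Gamma}(r)\ge\Lambda_n\bigr).
\]
This sum equals, up to constants, $\p(\widetilde\ell(\infty,0)\ge\Lambda_n)$, which by \eqref{eq:local_time_0'} is $e^{-\gamma\Lambda_n}=e^{\gamma\eta}n^{-\beta}$. The identity holds because $\widetilde\ell(\infty,0)$ is a geometric sum of exponentials, and the weight $(1-\gamma)^r$ matches $\p(\xi(\infty,0)\ge r)$ up to a factor $(1-\gamma)^{-1}$.

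Combining the pieces gives the bound $n^{-\beta-(\beta_1/2)(d/2-1)+o(1)}$. Since $d\ge3$, we have $(\beta_1/2)(d/2-1)\ge\beta_1/4$, which yields \eqref{eq:general_early}.

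The main obstacle is justifying the factorization of the $\sigma$'s against the skeleton events cleanly. This is handled by conditioning on the entire skeleton first, since the holding times are independent of $S$; the summation over $r$ then reassembles the exponential tail. The truncation at $R_0$ only costs a superpolynomially small term, so no delicate estimate is needed there.
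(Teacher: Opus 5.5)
Your proof is correct, but it decomposes the event differently from the paper. The paper passes to the inverse local time $\widetilde T_u$ and slices the level $\Lambda_n$ into $\lfloor 1/\varepsilon\rfloor$ pieces of height $\varepsilon\Lambda_n$. On the event, one slice must take at least $\varepsilon\lfloor n^{\beta_1/2}/2\rfloor$ skeleton steps, and the strong Markov property at the crossing indices gives the product $\mathbb P(\widetilde\ell(\infty,0)\ge (j-1)\varepsilon\Lambda_n)\,\mathbb P(0\in S[\varepsilon\lfloor n^{\beta_1/2}/2\rfloor,\infty))\,\mathbb P(\widetilde\ell(\infty,0)\ge(1-j\varepsilon)\Lambda_n)$. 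This loses a factor $e^{\gamma\varepsilon\Lambda_n}\approx n^{\varepsilon\beta}$, which is removed only by letting $\varepsilon\downarrow0$. You instead transplant the return-by-return argument of Lemma~\ref{lem:accumulate_early}. Truncating the number of visits at $R_0=C\log n$ costs $(1-\gamma)^{\lfloor R_0\rfloor}=n^{-C/\alpha+o(1)}$, which is harmless once $C>\alpha(\beta+\beta_1/4)$, and it forces a single return gap of length at least $L/R_0=n^{\beta_1/2-o(1)}$. The independence of the holding times from the skeleton then lets you factor exactly and resum via $\sum_{r}\gamma(1-\gamma)^{r-1}\,\mathbb P(\mathrm{Gamma}(r,1)\ge\Lambda_n)=e^{-\gamma\Lambda_n}$. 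Your route uses the strong Markov property only for the discrete skeleton, so you never handle the holding time that straddles a level crossing. It also gives, before the final weakening $(d/2-1)\beta_1/2\ge\beta_1/4$, the sharper bound $(\log n)^{O(1)}n^{-\beta-(d/2-1)\beta_1/2}$, with only a polylogarithmic loss. The paper's route avoids both the truncation and the Gamma-mixture identity, at the price of the $\varepsilon$-limit. Two minor points. In the identity, the weight $(1-\gamma)^r$ pairs naturally with $\mathbb P(\xi(\infty,0)=r)=\gamma(1-\gamma)^{r-1}$, so the constant is $(1-\gamma)/\gamma$. Also, $(1-\gamma)^{C\log n}$ is a fixed power of $n$, so ``smaller than any polynomial'' should read ``smaller than the target polynomial once $C$ is large''.
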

\begin{proof}
Define the inverse local time at the origin by
\[
\widetilde T_u:=\inf\{m\ge0:\widetilde{\ell}(m,0)\ge u\},
\qquad u\ge0.
\]
Fix \(\varepsilon>0\), and for \(j=1,\dots,\lfloor 1/\varepsilon\rfloor\), define
\[
\widetilde B_j:=
\Bigl\{
\varepsilon\lfloor n^{\beta_1/2}/2\rfloor
\le
\widetilde T_{j\varepsilon \Lambda_n}-\widetilde T_{(j-1)\varepsilon \Lambda_n}
<\infty
\Bigr\}.
\]
Then
\[
\Bigl\{
\widetilde{\ell}(\infty,0)\ge \Lambda_n,\ 
\widetilde{\ell}\big(\lfloor n^{\beta_1/2}/2\rfloor,0\big)<\Lambda_n
\Bigr\}
\subset
\bigcup_{j=1}^{\lfloor 1/\varepsilon\rfloor}\widetilde B_j,
\]
and therefore
\[
\mathbb P\Bigl(
\widetilde{\ell}(\infty,0)\ge \Lambda_n,\ 
\widetilde{\ell}\big(\lfloor n^{\beta_1/2}/2\rfloor,0\big)<\Lambda_n
\Bigr)
\le
\sum_{j=1}^{\lfloor 1/\varepsilon\rfloor}
\mathbb P(\widetilde T_{\Lambda_n}<\infty,\widetilde B_j).
\]
By the strong Markov property, for each \(j\),
\begin{align*}
&\mathbb P(\widetilde T_{\Lambda_n}<\infty,\widetilde B_j)\\
\le\,&
\mathbb P\bigl(\widetilde{\ell}(\infty,0)\ge (j-1)\varepsilon \Lambda_n\bigr)\,
\mathbb P\bigl(0\in S[\varepsilon\lfloor n^{\beta_1/2}/2\rfloor,\infty)\bigr)
\mathbb P\bigl(\widetilde{\ell}(\infty,0)\ge (1-j\varepsilon)\Lambda_n\bigr)\\
\le\,&
Cn^{-(1-\varepsilon)\beta-(d/2-1)\beta_1/2}
\le
Cn^{-(1-\varepsilon)\beta-\beta_1/4},
\end{align*}
since \(d\ge3\). Summing over \(j\le \lfloor 1/\varepsilon\rfloor\) and then letting
\(\varepsilon\downarrow0\), we obtain \eqref{eq:general_early}.
\end{proof}
\begin{proof}[Proof of Lemma~\ref{lem:delayed_return}]
For \(1\le m\le \lfloor n^{\beta_1/2}\rfloor\), we simply use
\[
\mathbb P\Big(
\widetilde{\ell}(\infty,0)\ge \Lambda_n,\ 
0\in S[m,\infty)
\Big)
\le
\mathbb P\bigl(\widetilde{\ell}(\infty,0)\ge \Lambda_n\bigr)
\lesssim n^{-\beta}.
\]
Now assume \(\lfloor n^{\beta_1/2}\rfloor < m\le \lfloor n^{\beta_1}\rfloor\). Splitting according to whether
\(\Lambda_n\) has been accumulated by time \(\lfloor n^{\beta_1/2}/2\rfloor\), we get
\begin{align*}
&\mathbb P\Big(
\widetilde{\ell}(\infty,0)\ge \Lambda_n,\ 
0\in S[m,\infty)
\Big)\\
\le\,&
\mathbb P\Big(
\widetilde{\ell}(\infty,0)\ge \Lambda_n,\ 
\widetilde{\ell}\big(\lfloor n^{\beta_1/2}/2\rfloor,0\big)<\Lambda_n
\Big) +
\mathbb P\Big(
\widetilde{\ell}\big(\lfloor n^{\beta_1/2}/2\rfloor,0\big)\ge \Lambda_n,\ 
0\in S[m,\infty)
\Big).
\end{align*}
The first term is bounded by \(n^{-(\beta+\beta_1/4)+o(1)}\) by Lemma~\ref{lem:general_early}. For the
second term, the Markov property gives
\begin{align*}
&\mathbb P\Big(
\widetilde{\ell}\big(\lfloor n^{\beta_1/2}/2\rfloor,0\big)\ge \Lambda_n,\ 
0\in S[m,\infty)
\Big)\\
\le\,&
\mathbb P\bigl(\widetilde{\ell}(\infty,0)\ge \Lambda_n\bigr)\,
\sup_{x\in\mathbb Z^d}
\mathbb P\Bigl(x\in S\bigl[m-\lfloor n^{\beta_1/2}/2\rfloor,\infty\bigr)\Bigr)\\
\overset{\eqref{eq:late_hit}}{=}&
n^{-(\beta+(d/2-1)\beta_1/2)+o(1)}
\le
n^{-(\beta+\beta_1/4)+o(1)}.
\end{align*}
This proves the lemma.
\end{proof}
\begin{proof}[Proof of \eqref{eq:case_c_bound}]
For \(i\in[1,K]\), let
\[
\mathbb J_i:=\bigl\{J\subset\{i,\ldots,K\}:\ \#J\le M,\ \inf J=i,\ J \text{ is not an interval}\bigr\}.
\]
We first record the uniform estimate corresponding to a fixed non-interval index set \(J\).
\begin{lemma}\label{lem:fixed_J}
Uniformly for \(i\in[1,K]\), \(J\in\mathbb J_i\), and \(k\in I_i\), the following holds.
If \(J\) admits the interval decomposition
\begin{align}\label{eq:J_decomposition}
J=[i,i_1]\cup[i_2,i_3]\cup\cdots\cup[i_{2m},i_{2m+1}],
\end{align}
where \(i=i_0\le i_1< i_2\le i_3<\cdots<i_{2m}\le i_{2m+1}\), and
\(
i_{2r}-i_{2r-1}\ge 2
\) for \(r=1,\dots,m\),
then
\begin{align}\label{eq:fixed_J}
\mathbb Q\Bigl(
\widetilde{\ell}(\widehat n,S_k)\ge \Lambda_n,\ \mathcal J_{S_k}=J
\Bigr)
\lesssim
n^{-(\beta+(d/2)\beta_2-\beta_1)+o(1)}
\prod_{r=1}^{m}\frac{1}{(i_{2r}-i_{2r-1}-1)^{d/2}}.
\end{align}
\end{lemma}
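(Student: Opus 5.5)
We bound the event $\{\widetilde{\ell}(\widehat n,S_k)\ge \Lambda_n,\ \mathcal J_{S_k}=J\}$ by a product of three kinds of factors: one for the thick point itself, one for each gap in $J$, and $O(1)$ losses from the conditioning. First we remove the conditioning. Since $\mathcal J_{S_k}=J$ means $S_k$ is visited only during the segments in $J$, we have $\widetilde{\ell}(\widehat n,S_k)=\sum_{\ell\in J}\widetilde{\ell}(I_\ell,S_k)$. For $r=1,\dots,m$ we also have $S_k\in S(I_{i_{2r}})$, while $S_k\notin S(I_\ell)$ for $i_{2r-1}<\ell<i_{2r}$. We drop the avoidance requirement and keep only the hitting events. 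The events in question then depend only on the blocks with indices in $J$ and on the gap blocks $[i_{2r-1}+1,i_{2r}-1]$. The argument in Lemma~\ref{lem:SlSm} uses only the initial $\lfloor n^{\beta_2}\rfloor$ steps of the gap blocks, and these steps are independent of $A$. As in \eqref{eq:QJ}, we may therefore replace $\mathbb Q$ by $\mathbb Q_J$, or more precisely by $\mathbb P(\,\cdot\,|\,\bigcap_{\ell\in J}A_\ell)$ while keeping the gap blocks' initial pieces unconditioned. Since $\#J\le M$ and $\mathbb P(A_\ell)=1-o(1)$, this costs only a factor $1+o(1)$, and we are reduced to an estimate under $\mathbb P$.

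Under $\mathbb P$ we proceed by successive conditioning on the gaps, with $x=S_k$ the fixed site. Write $s_r$ for the first time in $I_{i_{2r}}$ at which $x$ is visited. Condition on the path in $I_{[1,i_{2r-1}]}$ and on everything in $I'$-parts of the gap blocks, and apply the local CLT as in the proof of \eqref{eq:SlSm}. The $(i_{2r}-i_{2r-1}-1)\lfloor n^{\beta_2}\rfloor$ free steps give
\[
\p\bigl(S_{m}=x\ \big|\ \cdots\bigr)\lesssim \bigl((i_{2r}-i_{2r-1}-1)n^{\beta_2}\bigr)^{-d/2}
\]
for each $m\in I_{i_{2r}}$. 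A union bound over the at most $n^{\beta_1}$ choices of $s_r$ then yields the factor $n^{-((d/2)\beta_2-\beta_1)}(i_{2r}-i_{2r-1}-1)^{-d/2}$. For $r\ge2$ we would like to keep these factors. To get a single factor $n^{-((d/2)\beta_2-\beta_1)}$ in the statement while keeping all gap factors $(i_{2r}-i_{2r-1}-1)^{-d/2}$, we use the $n^{\beta_1}$-union bound only for the first gap. For later gaps we instead use that the hitting probability of $x$ after $g\,n^{\beta_2}$ free steps is at most $\sum_{t\ge g n^{\beta_2}}t^{-d/2}\lesssim (g n^{\beta_2})^{1-d/2}\le g^{-d/2}\cdot O(1)$ when $g\ge1$ and $n$ is large. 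This is a crude but sufficient replacement; if the powers do not match we simply keep the stronger $n$-power, which only helps.

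The thick-point factor comes from the strong Markov property at the successive first-visit times $k<s_1<\dots<s_m$. Given these times, the local time at $x$ accumulated on $J$ is at most the total local time of a walk started at $x$ at those instants. The key point is that the sum of local times over the $m+1$ visited stretches is dominated by a single infinite-horizon local time $\widetilde{\ell}(\infty,0)$. By the strong Markov property, successive excursions from $x$ concatenate, and the inverse-local-time argument of Lemma~\ref{lem:general_early} shows that the probability that the total reaches $\Lambda_n$ is at most
\[
\sum_{\text{splits}}\prod \p\bigl(\widetilde{\ell}(\infty,0)\ge \text{piece}\bigr)\le \operatorname{poly}(\log n)\,e^{-\gamma\Lambda_n}=n^{-\beta+o(1)},
\]
using the exponential tail from Lemma~\ref{lem:local_time_0}. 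The split is over at most $M$ pieces, discretized at mesh $\varepsilon\Lambda_n$ exactly as in Lemma~\ref{lem:general_early}, and $\varepsilon\downarrow0$ absorbs the loss into $n^{o(1)}$. Multiplying the three contributions gives \eqref{eq:fixed_J}.

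The main obstacle is interleaving the two estimates without double counting: the local-time tail uses excursions from $x$, while the gap factors use free increments in the gap blocks. We handle this by ordering the conditioning chronologically. At each gap we first condition on the past up to the end of the previous $J$-block, including the local time accumulated so far. We then pay the gap factor through the free $\lfloor n^{\beta_2}\rfloor$-step pieces, which are independent of the past and of the $A_\ell$ with $\ell\in J$. Finally we restart at $s_r$ with a fresh excursion tail. The exponential tails multiply to $e^{-\gamma\Lambda_n}$ up to the combinatorial split factor, exactly as in the two-point bound of Corollary~\ref{cor:CFRRZ}.
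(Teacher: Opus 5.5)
Your overall architecture is the paper's. You use a union bound over the hitting times in $I_{i_{2r}}$, the local CLT on the free initial pieces of the gap blocks, and independent per-block tails $e^{-\gamma t_r}$ combined over a split at mesh $\varepsilon\Lambda_n$ with $\varepsilon\downarrow0$. You then pass from $\mathbb Q$ to $\mathbb Q_J$ at cost $1+o(1)$.

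The step that fails is your treatment of the gaps $r\ge 2$. The inequality $(gn^{\beta_2})^{1-d/2}\le O(1)\,g^{-d/2}$ is equivalent to $g\lesssim n^{\beta_2(d/2-1)}$. But $g=i_{2r}-i_{2r-1}-1$ ranges up to $K\asymp n^{1-\beta_1}$, so for $d=3$ and $\beta_1\le 2/3$ the inequality fails for the largest gaps. A Green's-function bound on hitting $x$ at some time after $gn^{\beta_2}$ steps has only $g^{1-d/2}$ as its $g$-dependence. It sums over all later times instead of over the window of length $n^{\beta_1}$ in which the return must occur, and that lost factor of $g$ is exactly what you need. Since the defect is in the $g$-dependence, keeping the stronger power of $n$ does not repair it. Moreover, $g^{1-d/2}$ is not summable for $d=3,4$, and Lemma~\ref{lem:summation} requires summability.

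Such a bound is also not available under $\mathbb Q$. The $I'$-parts of the gap blocks are conditioned on $A_\ell$, and the product of $\mathbb P(A_\ell)$ over the gap blocks can be as small as $\exp\{-cn^{1-\beta}\}$. So you cannot release these blocks at cost $1+o(1)$, as you do for the at most $M$ blocks in $J$. Only their initial $\lfloor n^{\beta_2}\rfloor$-step pieces are free, and those do not form a contiguous walk.

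The detour is unnecessary. Use the union bound at every gap, as the paper does. This gives
$\prod_{r=1}^m n^{\beta_1}\bigl((i_{2r}-i_{2r-1}-1)n^{\beta_2}\bigr)^{-d/2}=n^{-m((d/2)\beta_2-\beta_1)}\prod_{r=1}^m (i_{2r}-i_{2r-1}-1)^{-d/2}$.
Since $m\ge1$ for a non-interval $J$ and $(d/2)\beta_2-\beta_1>0$, this is at most what \eqref{eq:fixed_J} asks for. Nothing requires exactly one power of $n$.

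Two smaller corrections:
\begin{enumerate}
\item Your remark that the total local time is dominated by a single copy of $\widetilde\ell(\infty,0)$ is true but cannot be used here. That local time is positively correlated with the return events, so it does not multiply against the gap factors. The product works because, with the times $k_r$ fixed, $\widetilde\ell([k_r,I^+_{i_{2r+1}}],S_{k_r})$ depends only on the increments and holding times inside $J_r$. It is therefore independent of the free gap pieces.
\item Do the steps in the paper's order. First peel off the events $\{S_{k_r}=S_k\}$ while conditioning on everything except the free pieces; the local CLT bound is uniform in the conditioned data. Only afterwards apply \eqref{eq:QJ}, to an event that is measurable with respect to the $J$-blocks.
\end{enumerate}
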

Assuming Lemma~\ref{lem:fixed_J}, we obtain
\begin{align*}
&\sum_{k\in I_i}
\mathbb Q\Bigl(
S_k\notin S[0,k-1],\ \widetilde{\ell}(\widehat n,S_k)\ge \Lambda_n,\ 
\mathcal J_{S_k}\text{ is not an interval}
\Bigr)\\
\le\;&
\sum_{J\in\mathbb J_i}\sum_{k\in I_i}
\mathbb Q\Bigl(
\widetilde{\ell}(\widehat n,S_k)\ge \Lambda_n,\ \mathcal J_{S_k}=J
\Bigr)\\
 \lesssim\;&
\lfloor n^{\beta_1}\rfloor\,
n^{-(\beta+(d/2)\beta_2-\beta_1)+o(1)}
\sum_{\substack{J\in\mathbb J_i:\\ J\text{ has decomposition }\eqref{eq:J_decomposition}}}
\prod_{r=1}^{m}\frac{1}{(i_{2r}-i_{2r-1}-1)^{d/2}}.
\end{align*}
We now need the following elementary summation estimate.
\begin{lemma}\label{lem:summation}
For every fixed \(i\in[1,K]\),
\begin{align}\label{eq:summation}
\sum_{\substack{J\in\mathbb J_i:\\ J\text{ has decomposition }\eqref{eq:J_decomposition}}}
\prod_{r=1}^{m}\frac{1}{(i_{2r}-i_{2r-1}-1)^{d/2}}
<\infty.
\end{align}
\end{lemma}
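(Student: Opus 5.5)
We prove \eqref{eq:summation} by encoding each $J\in\mathbb J_i$ through its interval decomposition \eqref{eq:J_decomposition} and bounding the sum by a quantity independent of $K$. The relevant data of $J$ are the number of gaps $m$, the block lengths $\ell_r:=i_{2r+1}-i_{2r}\ge 0$ for $r=0,\dots,m$ (with $i_0=i$), and the gap sizes $g_r:=i_{2r}-i_{2r-1}-1\ge1$ for $r=1,\dots,m$. The map $J\mapsto (m,(\ell_r),(g_r))$ is injective, because $i$ is fixed and $J$ is recovered by laying down blocks and gaps alternately. The constraint $\#J\le M$ gives $m+1\le M$ (each block contains at least one index) and $\sum_r(\ell_r+1)\le M$. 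Since $J$ is not an interval, $m\ge1$.

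The weight $\prod_{r=1}^m g_r^{-d/2}$ depends only on the gaps. We therefore drop the upper constraint $i_{2m+1}\le K$, which can only enlarge the sum, and then factor:
\[
\sum_{J\in\mathbb J_i}\prod_{r=1}^{m}\frac{1}{(i_{2r}-i_{2r-1}-1)^{d/2}}
\le\sum_{m=1}^{M-1}\ \#\bigl\{(\ell_0,\dots,\ell_m)\in\mathbb N^{m+1}:\ \ell_r\le M\bigr\}\cdot\Bigl(\sum_{g=1}^\infty g^{-d/2}\Bigr)^{m}.
\]
The block-length count is at most $(M+1)^{m+1}$. The gap series $\zeta(d/2)$ converges because $d\ge3$ gives $d/2>1$. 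Hence the right-hand side is bounded by $\sum_{m=1}^{M-1}(M+1)^{m+1}\zeta(d/2)^m$, a finite constant depending only on $M$ and $d$, and in particular uniform in $i$ and $K$.

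The only point needing care is that the gap variables really range independently over $\mathbb N^+$ once the $K$-constraint is relaxed, so that the sum factorizes. This holds because, given the block lengths, distinct gap tuples produce distinct sets $J$. Beyond this the argument is routine; the essential input is the summability $\sum_g g^{-d/2}<\infty$, which is exactly where $d\ge3$ is used.
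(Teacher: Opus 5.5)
Your proposal is correct and follows essentially the same route as the paper: encode $J$ by one of finitely many block-length configurations (the paper uses $s_r=\ell_r+1$ with $\sum_r s_r\le M$), then factor the sum over the free gap lengths $g_r\ge1$ into $\bigl(\sum_{g\ge1}g^{-d/2}\bigr)^m$, which is finite since $d\ge3$. Your explicit remark that the bound is uniform in $i$ and $K$ is a useful addition, because that uniformity is what the proof of \eqref{eq:case_c_bound} actually needs; mere finiteness is trivial for each fixed $K$.
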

Assuming Lemma~\ref{lem:summation}, we conclude that
\begin{align*}
&\sum_{k\in I_i}
\mathbb Q\Bigl(
S_k\notin S[0,k-1],\ \widetilde{\ell}(\widehat n,S_k)\ge \Lambda_n,\ 
\mathcal J_{S_k}\text{ is not an interval}
\Bigr)\\
=\,&
n^{-(\beta-\beta_1)-((d/2)\beta_2-\beta_1)+o(1)}.
\end{align*}
This completes the proof.
\end{proof}
\begin{proof}[Proof of Lemma~\ref{lem:fixed_J}]
For an index set \(J\) with decomposition \eqref{eq:J_decomposition}, define
\[
J_r:=\bigcup_{\ell\in[i_{2r},i_{2r+1}]} I_\ell,
\qquad r=0,\dots,m,
\]
where \(i_0:=i\).
We first record the following blockwise estimate.
\begin{lemma}\label{lem:blockwise_J}
Uniformly for \(i\in[1,K]\), \(k\in I_i\), an index set \(J\) with decomposition
\eqref{eq:J_decomposition}, and \(t_0,\dots,t_m\in\mathbb R_+\), we have
\begin{equation}\label{eq:Jr}
\begin{aligned}
    &\mathbb Q\Bigl(
\widetilde{\ell}(J_r,S_k)\ge t_r\ \forall\,r=0,\ldots,m,\ \mathcal J_{S_k}=J
\Bigr)\\
\lesssim\,&
\left[
\prod_{r=1}^{m}
\frac{n^{\beta_1}}{\bigl((i_{2r}-i_{2r-1}-1)n^{\beta_2}\bigr)^{d/2}}
\right]
\exp\Bigl\{-\gamma\sum_{r=0}^{m}t_r\Bigr\}. 
\end{aligned}   
\end{equation}
\end{lemma}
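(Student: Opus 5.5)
We prove \eqref{eq:Jr} by revealing the walk block-group by block-group, in increasing time order, and applying the strong Markov property at the first visit of \(S_k\) in each group \(J_r\). First we remove the conditioning. The event on the left of \eqref{eq:Jr} is measurable with respect to the blocks in \(J\) together with the positions of the intermediate segments. The intermediate segments only enter through the requirement that \(S_k\) is not visited during the gaps. We simply drop this requirement, except that we keep the displacement information across the gaps. Arguing as in \eqref{eq:QJ}, with conditioning only on the \(A_\ell\), \(\ell\in J\) (at most \(M\) of them, each of probability \(1-o(1)\)), we can bound the \(\mathbb Q\)-probability by \((1+o(1))\) times the corresponding \(\mathbb P\)-probability. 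Strictly, the gap blocks are also conditioned under \(\mathbb Q\). The bound of Lemma~\ref{lem:SlSm}, however, already holds under \(\mathbb Q\), since it uses only the unconditioned initial pieces \(I_\ell\setminus I'_\ell\) of the gap blocks, which are independent of \(A\). We will use exactly that mechanism.

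We then proceed inductively in \(r\). For \(r=0\), on the event \(\{\widetilde\ell(J_0,S_k)\ge t_0\}\) we bound the probability, via the strong Markov property at time \(k\) and Lemma~\ref{lem:local_time_0} (with \eqref{eq:local_time_0} in continuous form), by \(\mathbb P(\widetilde\ell(\infty,0)\ge t_0)=e^{-\gamma t_0}\). The bound is uniform in the past and in the position \(S_k\). For \(r\ge1\), we condition on \(\mathcal G_{i_{2r-1}}\) (everything up to the end of the previous group). The group \(J_r\) must visit \(x=S_k\). By Lemma~\ref{lem:SlSm}, summed over the at most \(\#J_r\cdot n^{\beta_1}\lesssim M n^{\beta_1}\) candidate times \(m\) in the first block of \(J_r\) (or in any of its blocks; the count is still \(O(n^{\beta_1})\) since \(M\) is fixed), this has conditional probability \(\lesssim n^{\beta_1}((i_{2r}-i_{2r-1}-1)n^{\beta_2})^{-d/2}\). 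This estimate uses only the independent gap increments of length \(n^{\beta_2}\) in each intermediate block. Having located the first hitting time \(\rho_r\) of \(x\) inside \(J_r\), which is a stopping time, we apply the strong Markov property at \(\rho_r\). The local time accumulated at \(x\) during \(J_r\) is then dominated by \(\widetilde\ell(\infty,0)\) for a fresh walk, giving the factor \(e^{-\gamma t_r}\).

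The main obstacle is that Lemma~\ref{lem:SlSm} conditions on the future segments \(S(I_{[j,K]})\), whereas we want to multiply the hitting factor with a local-time tail for the subsequent motion. To decouple the two, we order the revelation as follows. First reveal the gap start pieces, then the position of the first hit, then the local time after the hit. More concretely, write the event as \(\sum_{m\in J_r}\mathbb P(\rho_r=m,\ \widetilde\ell([m,\cdot],x)\ge t_r\mid\cdots)\). For each \(m\), condition on \(\mathcal F_m\) (the past up to \(m\)) to extract \(e^{-\gamma t_r}\) from the future. The remaining factor \(\mathbb P(S_m=x\mid \mathcal G_{i_{2r-1}})\) is then bounded using the local CLT for the \(n^{\beta_2}\)-step gap increments, as in the proof of Lemma~\ref{lem:SlSm}, without needing to condition on the future. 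Under \(\mathbb Q\) the conditioning on \(A_\ell\) for the blocks of \(J_r\) is handled by the factor \(1/\mathbb P(A_\ell)=1+o(1)\), at most \(M\) times. Iterating over \(r=1,\dots,m\) multiplies the bounds and yields \eqref{eq:Jr}. The final claim \eqref{eq:fixed_J} should then follow by summing \eqref{eq:Jr} over a discretization of \((t_r)\) with \(\sum t_r\ge\Lambda_n\); the resulting factor \(n^{o(1)}\) comes from the polynomially many grid points in \(\log n\).
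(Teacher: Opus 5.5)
Your argument is correct, and it uses the same three ingredients as the paper: the local CLT applied only to the unconditioned initial pieces $I_\ell\setminus I'_\ell$ of the gap blocks; the Markov property, which gives the tail $e^{-\gamma t_r}$ for the local time after the first visit in each group; and the comparison $\mathbb Q\le \prod_{\ell\in J}\mathbb P(A_\ell)^{-1}\,\mathbb P=(1+o(1))\,\mathbb P$ on the blocks of $J$. What differs is the order in which they are applied.

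\textbf{The paper's route.} It takes a union bound over all visit times $(k_r)_{r=1}^m\in\prod_r I_{i_{2r}}$ at once, which is where $(n^{\beta_1})^m$ comes from. It then removes the coincidences $S_{k_r}=S_k$ one at a time, from $r=m$ down to $r=1$, using \eqref{eq:SlSm} with its two-sided conditioning. Only after that does it apply \eqref{eq:QJ}, to the leftover event $\{\widetilde\ell([k_r,I^+_{i_{2r+1}}],S_{k_r})\ge t_r\ \forall r\}$. This event depends only on the blocks of $J$ and splits into independent factors over disjoint time windows. In this route the future-conditioning you single out as the main obstacle is harmless. Every remaining event is measurable with respect to the conditioning data (the block shapes other than the gap initial pieces, together with the holding times), so the hitting factor is extracted with everything else frozen.

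\textbf{Your route.} You condition on $\mathcal G_{i_{2r-1}}$, sum over the first hit $\rho_r$, apply the Markov property at $\rho_r$, and bound $\mathbb P(S_m=x\mid\mathcal G_{i_{2r-1}})$ by a one-sided local CLT. This sidesteps the two-sided conditioning. The price is that you must handle the conditioning on $A_\ell$ inside the blocks of $J_r$ at every Markov step. This is legitimate: $\mathbb Q$ is the product over blocks of $\mathbb P(\cdot\mid A_\ell)$, so given $\mathcal G_{i_{2r-1}}$ you may replace the law of each block of $J_r$ by $\mathbb P$ at cost $\mathbb P(A_\ell)^{-1}$, while leaving the gap blocks conditioned. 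Your $\mathcal G_i$ should also contain the holding times. Note also that your opening appeal to \eqref{eq:QJ} for the whole event is not literally valid, since that event depends on the gap displacements. You correct this yourself, and the paper avoids it by integrating out the gap pieces first.

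The paper's ordering is shorter given Lemma~\ref{lem:SlSm}. Yours yields a bound that holds conditionally on the past at each stage.

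\textbf{A point common to both proofs.} Your $r=0$ step (Markov at time $k$) and the paper's inclusion with $k_0:=k$ both replace $\widetilde\ell(J_0,S_k)$ by $\widetilde\ell([k,I^+_{i_1}],S_k)$. That replacement requires $S_k\notin S[I_i^-,k-1]$. This condition is available in the application \eqref{eq:case_c_bound}, where $S_k\notin S[0,k-1]$ is part of the event. Without it, the backward and forward local times at $S_k$ are independent and each is dominated by $\mathrm{Exp}(\gamma)$. One then loses only a factor $O(1+t_0)$, which is $n^{o(1)}$ for the values $t_0\le\Lambda_n$ used in Lemma~\ref{lem:fixed_J}.
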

Assuming Lemma~\ref{lem:blockwise_J}, we now complete the proof of
Lemma~\ref{lem:fixed_J}. Choose \(\varepsilon>0\) so small that \(\lfloor 1/\varepsilon\rfloor\ge M\). Since \(J\) contains at most \(M\) blocks, that is, \(m+1\le M\), it follows that on the event
\(
\bigl\{\widetilde{\ell}(\widehat n,S_k)\ge \Lambda_n,\ \mathcal J_{S_k}=J\bigr\},
\)
there exists \((q_0,\dots,q_m)\in\mathbb N_0^{m+1}\) such that
\[
\sum_{r=0}^{m}q_r=\lfloor 1/\varepsilon\rfloor-M\qquad\text{and}\qquad
\widetilde{\ell}(J_r,S_k)\ge q_r\varepsilon\Lambda_n,\qquad r=0,\dots,m.
\]
Hence, by the union bound,
\begin{align*}
\mathbb Q\Bigl(
\widetilde{\ell}(\widehat n,S_k)\ge \Lambda_n,\ \mathcal J_{S_k}=J
\Bigr)\le
\sum_{\substack{(q_0,\dots,q_m)\in\mathbb N_0^{m+1}:\\ \sum_{r=0}^{m}q_r=\lfloor 1/\varepsilon\rfloor-M}}
\mathbb Q\Bigl(
\widetilde{\ell}(J_r,S_k)\ge q_r\varepsilon\Lambda_n\ \forall r,\ \mathcal J_{S_k}=J
\Bigr).
\end{align*}
Applying Lemma~\ref{lem:blockwise_J}, we obtain
\begin{align*}
&\mathbb Q\Bigl(
\widetilde{\ell}(\widehat n,S_k)\ge \Lambda_n,\ \mathcal J_{S_k}=J
\Bigr)\\
\lesssim\,&
n^{-(\beta-c\varepsilon)}
\left[
\prod_{r=1}^{m}
\frac{n^{\beta_1}}{\bigl((i_{2r}-i_{2r-1}-1)n^{\beta_2}\bigr)^{d/2}}
\right]\\
&\cdot\#\Bigl\{(q_0,\dots,q_m)\in\mathbb N_0^{m+1}:\sum_{r=0}^{m}q_r=\lfloor 1/\varepsilon\rfloor-M\Bigr\}.
\end{align*} 
Since
\(
\#\Bigl\{(q_0,\dots,q_m)\in\mathbb N_0^{m+1}:\sum_{r=0}^{m}q_r=\lfloor 1/\varepsilon\rfloor-M\Bigr\}
\le (1/\varepsilon)^{M+1},
\)
letting \(\varepsilon\downarrow0\) proves \eqref{eq:fixed_J}.
\end{proof}
\begin{proof}[Proof of Lemma~\ref{lem:blockwise_J}]
Let \(J\in\mathbb J_i\) admit the same interval decomposition as in \eqref{eq:J_decomposition}.
Then for any $k\in I_i$,
\begin{align*}
&\bigl\{\widetilde{\ell}(J_r,S_k)\ge t_r \ \forall\, r\in[0,m],\ \mathcal J_{S_k}=J\bigr\} \\
&\subset
\bigcup_{(k_r)_{r=1}^m\in \prod_{r=1}^m I_{i_{2r}}}
\bigl\{
\widetilde{\ell}([k_r,I_{i_{2r+1}}^{+}],S_{k_r})\ge t_r \ \forall\, r\in[0,m],\
S_{k_r}=S_k \ \forall\, r=1,\dots,m
\bigr\},
\end{align*}
where $k_0:=k$. Consequently,
\begin{align}\label{eq:union}
\begin{aligned}
&\mathbb Q\bigl(
\widetilde{\ell}(J_r,S_k)\ge t_r \ \forall\, r\in[0,m],\ \mathcal J_{S_k}=J
\bigr) \\
\le\;&
\sum_{(k_r)_{r=1}^m\in \prod_{r=1}^m I_{i_{2r}}}
\mathbb Q\bigl(
\widetilde{\ell}([k_r,I_{i_{2r+1}}^{+}],S_{k_r})\ge t_r \ \forall\, r\in[0,m],\
S_{k_r}=S_k \ \forall\, r=1,\dots,m
\bigr).    
\end{aligned}
\end{align}
Applying \eqref{eq:SlSm} with the above \(i\) and \(k\), and iteratively for
\(j=i_{2m}, i_{2(m-1)},\ldots, i_2\), we get
\begin{align*}
    &\mathbb Q\bigl(
\widetilde{\ell}([k_r,I^+_{i_{2r+1}}],S_{k_r})\ge t_r \ \forall\, r\in[0,m],\
S_{k_r}=S_k \ \forall\, r=1,\dots,m
\bigr)\\
\le\,& \frac{1}{\big((i_{2m}-i_{2m-1}-1)n^{\beta_2}\big)^{d/2}}\\
&\cdot \mathbb Q\bigl(
\widetilde{\ell}([k_r,I^+_{i_{2r+1}}],S_{k_r})\ge t_r \ \forall\, r\in[0,m],\
S_{k_r}=S_k \ \forall\, r=1,\dots,m-1
\bigr)\\
\le\,&\cdots\le \left[
\prod_{r=1}^{m}
\frac{1}{\bigl((i_{2r}-i_{2r-1}-1)n^{\beta_2}\bigr)^{d/2}}
\right] \mathbb Q\bigl(
\widetilde{\ell}([k_r,I^+_{i_{2r+1}}],S_{k_r})\ge t_r \ \forall\, r\in[0,m]
\bigr).
\end{align*}
Moreover,
\begin{align*}
    &\mathbb Q\bigl(
\widetilde{\ell}([k_r,I^+_{i_{2r+1}}],S_{k_r})\ge t_r \ \forall\, r\in[0,m]
\bigr)\\
\overset{\eqref{eq:QJ}}{=}&\mathbb Q_J\bigl(
\widetilde{\ell}([k_r,I^+_{i_{2r+1}}],S_{k_r})\ge t_r \ \forall\, r\in[0,m]
\bigr)\\
\le\,& \big(1+o(1)\big)\mathbb P\bigl(
\widetilde{\ell}([k_r,I^+_{i_{2r+1}}],S_{k_r})\ge t_r \ \forall\, r\in[0,m]
\bigr)\\
\le\,& \prod_{r=0}^m \mathbb P\bigl(
\widetilde{\ell}(\infty,0)\ge t_r\big)=\exp\Bigl\{-\gamma\sum_{r=0}^{m}t_r\Bigr\}.
\end{align*}
Combining the above bounds, using that
\(
\#\Bigl(\prod_{r=1}^m I_{i_{2r}}\Bigr)=(n^{\beta_1})^m,
\)
and substituting into \eqref{eq:union}, we obtain \eqref{eq:Jr}.
\end{proof}
\begin{proof}[Proof of Lemma~\ref{lem:summation}]
Let \(J\in\mathbb J_i\) admit the same interval decomposition as \eqref{eq:J_decomposition}.
For each \(1\le r\le m\), define the gap length
$g_r:=i_{2r}-i_{2r-1}-1\ge1,
$
and the block lengths
\[
s_0:=i_1-i+1,
\qquad
s_r:=i_{2r+1}-i_{2r}+1\ge1,\quad 1\le r\le m.
\]
Since \(\#J\le M\), we have
\(
\sum_{r=0}^{m}s_r\le M.
\)
Thus the tuple \((m,s_0,\dots,s_m)\) can take only finitely many values.
For each fixed such tuple, summing over all possible gap sequences \((g_1,\dots,g_m)\) yields
\[
\sum_{g_1=1}^{\infty}\cdots\sum_{g_m=1}^{\infty}
\prod_{r=1}^{m}\frac{1}{g_r^{d/2}}
=
\left(\sum_{g=1}^{\infty}\frac{1}{g^{d/2}}\right)^m
<\infty,
\]
since \(d \ge 3\). This proves \eqref{eq:summation}.
\end{proof}
\bibliographystyle{imsart-number}
\bibliography{localtime}
\end{document}